\newtheorem{thm}{Theorem}[section]
\newtheorem{lemma}[thm]{Lemma}
\newtheorem{prop}[thm]{Proposition}
\newtheorem{cor}[thm]{Corollary}
\newtheorem*{conjecture*}{Conjecture}
\theoremstyle{definition}
\newtheorem{defn}[thm]{Definition}
\newtheorem{example}[thm]{Example}
\newtheorem{rem}[thm]{Remark}
\newtheorem{question}[thm]{Question}
\DeclareMathOperator{\mult}{\rm {mult }}
\newcommand{\kk}{{k}}
\newcommand{\point}{P}
\newcommand{\dimtype}{\operatorname {d.t. }}
\setlist[itemize]{labelindent=.6em, itemindent=1em, leftmargin=!, label=\textbullet}
\newcommand{\R}{\mathbb R}
\renewcommand{\P}{\mathbb P}
\newcommand{\proj}{\mathbb P}
\newcommand{\N}{\mathbb N}
\newcommand{\C}{\mathbb{C}}
\newcommand{\K}{\mathbb{K}}
\newcommand{\Q}{\mathbb{Q}}
\newcommand{\D}{D}
\newcommand{\DD}{\Delta}
\newcommand{\pr}{pr }
\DeclareMathOperator{\unit}{unit }
\DeclareMathOperator{\Sing}{Sing \,}
\DeclareMathOperator{\Reg}{Reg }
\renewcommand{\a}{\alpha}
\renewcommand{\b}{\beta}
\newcommand{\q}{\mathbf{q}}
\newcommand{\x}{\mathbf{x}}
\newcommand{\la}{\lambda}
\newcommand{\ovl}{\overline}
\newcommand{\lgw}{\longrightarrow}
\newcommand{\W}{W }
\newcommand{\bW}{\overline {\W}}
\newcommand{\bbW}{\overline {\W^*}}
\renewcommand{\tt}{\mathbf{t}}
\DeclareMathOperator{\codim}{\rm {codim }} 
\newcommand{\bkk}{\overline {k}}
\newcommand{\bpoint}{\overline P}
\newcommand{\pointgen}{P_{gen}} 
\newcommand{\Ph}{\Phi }
\newcommand{\Ps}{\Psi }
\newcommand{\ph}{\phi }
\newcommand{\aaa}{arc-wise analytic}
\newcommand{\aab}{arc-wise analytically}
\title{Algebro-geometric equisingularity of Zariski}
\author{Adam Parusi\'nski}
\address{Universit\'e C\^ote d'Azur,  CNRS,  LJAD, UMR 7351, 06108 Nice, France} 
\email{adam.parusinski@univ-cotedazur.fr}
\thanks{
Partially supported  ANR project LISA (ANR-17-CE40-0023-03)}
\keywords {
Algebraic and Analytic Singularities, Equisingularity, Stratifications,  Zariski equisingularity, Topological equivalence, Zariski dimensionality type}
\subjclass[2010]{
32Sxx,	
14B05. 
32B10,   	
}
\begin{document}

\begin{abstract}This is a survey on Zariski equisingularity. We recall its definition, main properties, and a variety of applications in Algebraic Geometry and Singularity Theory. In the first part of this survey, we consider Zariski equisingular families of complex analytic or algebraic hypersurfaces.  We also discuss how to construct Zariski equisingular deformations. 
 In the second part, we present Zariski equisingularity of hypersurfaces along a nonsingular subvariety and its relation to other equisingularity conditions. We also discuss the canonical stratification of such hypersurfaces given by the dimensionality type. \end{abstract}

 \maketitle

 \tableofcontents





\section*{Introduction}\label{sec:intro}

A singularity is the germ of a complex or real analytic space $(V,p )$ 
that is not regular at $p $.   Equisingularity means equivalent or similar singularity and it is always necessary to make precise which equivalence of singularities we have in mind.  Thus  two singularities $(X, x)$ and $(Y, y)$  are analytically equivalent if there is an analytic isomorphism germ 
$\ph: (X,x) \to (Y,y)$.  If $\ph$ is only a homeomorphism then we say 
that $(X, x)$ and $(Y, y)$ are topologically equivalent.  If 
$(X, x)$ and $(Y, y)$ are both subspaces of the affine space $(\K^n,0)$, 
$\K=\R$ or $\C$, then we may require $\ph $ to be the restriction of an isomorphism (resp. homeomorphism) of the ambient spaces $\Ph: (\K^n, 0) \to(\K^n, 0) $. If such $\Ph$ exists we say then that $(X, x)$ and $(Y, y)$ are 
ambient analytically (resp. topologically) equivalent.

Let $V$ be a real or complex analytic space.  Then there exists a stratification $\mathcal S$ of $V$,  that is a decomposition 
of $V$ into analytic manifolds that, moreover, are usually required to satisfy some additional properties. For the notion of stratification and a historical account of stratification theory, we refer the reader  to the paper of D. Trotman \cite{trotman2020} in the first volume of this handbook and the references therein.  It is known that there always exists a stratification of $V$ that is topologically equisingular  along each stratum, 
that is if $p_1$ and $p_2$ belong to the same stratum then 
$(V, p_1)$ and $(V, p_1)$ are topologically equivalent. 
If $V$ is a subspace of $\K^n$ then one may, moreover, require this 
stratification to be ambient topologically equisingular. This can be achieved by constructing a Whitney stratification of $V$.  
Another and entirely independent way of constructing such a stratification is Zariski equisingualrity, which is the subject of this survey.  


Recall that, in general, there is no stratification 
that is analytically equisingular along each stratum, as the classical example of Whitney 
\cite[Example 13.1]{whitneymorse65} shows: $V = \{(x,y,z)\in \K^3; xy(y+x) (y-zx)=0\}$ 
admits a continuous family of analytically (or even $C^1$-diffeomorphically) non-equivalent singularities, due to the phenomenon of continuous moduli 
(the cross-ratio in this example).  

In 1971 in "Some open questions in the theory of singularities" 
\cite{zariski71open}, O. Zariski proposed a general theory of equisingularity for complex algebraic and analytic hypersurfaces. 
Zariski's approach was based on a new version of equisingularity that Zariski called algebro-geometric equisingularity, since it was defined by purely algebraic means but it reflected many geometric properties.   For instance, as Varchenko shows in 
\cite{varchenkoizv72,varchenko73,varchenkoICM75} answering a question posed by Zariski, Zariski equisingularity, which we now call the algebro-geometric equisingularity of Zariski, implies ambient topological triviality.  Zariski equisingularity under an additional genericity of projection assumption implies Whitney conditions as shown by Speder 
\cite{speder75}. 


This notion of equisingularity extended Zariski's earlier work on the singularities of plane curves, their equivalence and their families, 
see \cite{zariski65-S1,zariski65-S2,zariski68-S3}. 
For the general case of hypersurface singularities over an algebraically closed field of characteristic zero Zariski presented his program in \cite{zariski76} and \cite{zariski1979}. The paper \cite{zariski1979} "Foundations of a general theory of equisingularity on $r$-dimensional algebroid and algebraic varieties, of embedding dimension $r + 1${"},  published in 1979, contains complete foundations of this theory, stated for algebroid varieties over an algebraically closed field of characteristic zero.  
(Recall that algebroid varieties are the varieties defined by ideals of the rings of formal power series, see \cite{lefschetz53} Ch. IV. 
 and \cite{zariski1979} Section 2.) Since then Zariski equisingularity has been widely applied in the theory of singularities.  We present in this survey an account, certainly incomplete, of this development. 

 Intuitively, Zariski's notion can be characterized by two properties:
\begin{enumerate}
	\item
If $(V,\point_1)$  and $(V,\point_2)$ are equisingular, then $\point_1$ is a regular point of $V$ if and only if $\point_2$ is a regular point of $V$.
\item
If $W\subset \Sing V$ is non-singular then $V$ is equisingular along $W$ at $\point\in W$ if and only if for all sufficiently general projections  
$\pi: \K^{r+1}\to \K^r$ the discriminant locus of $\pi|_V$ is equisingular 
along $\pi(W) $ at $\pi(\point)$.
\end{enumerate} 



Formally, one may talk about two notions of Zariski equisingularity, 
of a hypersurface along its nonsingular subvariety and of a family of hypersurface singularities parameterized by a finite number of parameters.  This is already present, implicitly, 
in \cite{zariski71open}, where the former one is motivated by the latter one. We shall follow this path in this survey as well. 
In Section \ref{sec:curves} we describe the equisingular families of complex plane curve singularities.  This description is based on Puiseux with parameter theorem, Theorem \ref{thm:PuiseuxTheorem}. Then we   introduce Zariski equisingularity of families, Sections \ref{sec:curves} and \ref{sec:infamilies}. As we have mentioned, Zariski equisingular families are topologically trivial.  Therefore Zariski equisingularity implies the generic topological equisingularity of real or complex, algebraic varieties or analytic spaces (not necessarily hypersurfaces).  We present this principle in subsection \ref{ssec:arbitrarycodimension}. As a consequence  Zariski equisingularity provides an algorithmic construction of a topologically equisingular stratification. More 
  applications to Algebraic Geometry are presented in subsections \ref{ssec:fundamentalgroup} and \ref{ssec:generalposition}.  

In Section \ref{sec:construction} we show how to construct equisingular deformations of a given singularity.  This construction appears in many applications, in particular it is used to show that a (real or complex) analytic singularity is homeomorphic to an algebraic one, see subsection \ref{ssec:homeoanalytic-algebraic}, and, moreover we may assume that the latter one is defined over the field of algebraic numbers $\overline \Q$, see subsection \ref{ssec:homeotonumberfields}.  
At the end of Section \ref{sec:construction} we discuss application of Zariski equisingularity to trivialize families of analytic function and map germs 
\ref{ssec:functions} and \ref{ssec:smoothmappings}.  

In Section \ref{sec:ZEalong}  we present the original notion of Zariski equisingularity of a hypersurface $V$ along a nonsingular subvariety $W$, and a related notion of the dimensionality type.  
Zariski equisingularity along a hypersurface is defined by taking successive co-rank 1 projections and their discriminants, and a similar construction is used to define Zariski equisingularity in families.  The main, and to some extent still open problem, is to decide what projection to take to verify  whether such an equisingularity holds.  As follows from Zariski work, in the case of  families of plane curves singularities, the equisingularity given by a single projection implies equisingularity for all transverse  projections, for this notion see subsection \ref{ssec:equimultiplicity}.  Therefore, originally, Zariski considered transverse projections as sufficient for such verification, see \cite{zariski71open}.  In 
\cite{luengo85} Luengo gave an example of a family of surface singularities in $\C^3$ that is Zariski equisingular for one transverse projection but not for a generic or generic 
linear projection.  Therefore, in \cite{zariski1979}, Zariski 
proposed to build this theory on the notion of "generic" projection.  The definition of such generic projection given in \cite{zariski1979} is 
therefore crucial.  It involves adding all the coefficients of a generic formal change of coordinates as indeterminates to the ground field.  As Zariski also showed in \cite{zariski1979} a generic (in a more standard meanining) polynomial projection gives the same theory, that is to say the same notion of generic Zariski equisingularity along a nonsingular subvariety.  But it is not known how to verify which polynomial projections are generic in this sense or even whether there is a bound on the 
minimal degree of such polynomial generic projections. This makes algorithmic computations of the dimensionality type and related notions of generic Zariski equisingularity and Zariski's canonical stratification impossible. The algebraic case was studied in more detail by Hironaka \cite{hironaka79}, where the algebraic semicontinuity of the minimal degree of such polynomial projection is shown.  
The question whether a generic linear projection is always sufficient is still open for dimensionality type $\ge 2$, though the case of the dimensionality type 2 is fairly well understood thanks to \cite{brianconhenry80}.  

\subsection*{General set-up}{}
In this survey we present Zariski's theory in the complex analytic
 set-up, which seems to be the most 
 common and of the biggest interest for singularity theory.  There are two obvious extensions that one has to keep in mind. The first one, as the original definition of Zariski, is the theory of algebroid varieties over an arbitrary algebraically closed field of characteristic zero, when one works with the varieties defined by the ideals in the ring of formal power series.  
The second one is the real analytic set-up.  
Many results on Zariski equisingularity, such as topological triviality for example, are valid in both complex analytic and real analytic set-ups.  The real analytic set up sometimes requires more careful statements, for instance, by replacing analytic sets by the equations or ideals defining them. In general, 
for Zariski equisingularity, the assumption on the ground field to be algebraically closed seems not to be essential, unlike the assumption to be of characteristic zero, which is necessary.  

In Section \ref{sec:curves}, which can be considered as a motivation for the general definition, we discuss the equisingularity of complex 
plane curves.  Section \ref{sec:infamilies}  and \ref{sec:construction} 
are presented for complex and real analytic or algebraic spaces.  
For the definitions, theorems and proofs of these two sections there is 
no essential difference between the real and the complex case.  
The second part of Section \ref{sec:ZEalong}, the dimensionality type,  
is presented in the algebroid set-up, like Zariski's original definition.  Every statement of this section holds in complex analytic case.  We also believe that it can be carried over to the real analytic set-up, but this has yet to be done.   

\subsection*{Notation and terminology.} 
We denote by $\K$ either  $\R$ or $\C$.  Thus, by $\K$-analytic we mean either real analytic or holomorphic (complex analytic).   Sometimes we abbreviate it saying that a space or a map is analytic if the ground field, 
$\C$ or $\R$, is clear from the context or if the result holds in both cases. 

By an analytic space, we mean one in the sense of \cite{narasimhan66}. 
As we work mostly in the local analytic case, it suffices to consider only analytic set germs.  For an analytic space $X$ by $\Sing X$ we denote the set of singular points of $X$, i.e. the support of the singular subspace of $X$.  By $\Reg X$ we denote its complement $X\setminus \Sing X$, the 
set of regular points of $X$.   For an analytic function germ $F$  we denote by  $V(F)$ its zero set and by $F_{red}$ its reduced (i.e. square free) form.  By a real analytic arc, we mean  a real analytic map $\gamma : I\to X$, where $I=(-1,1)$ and $X$ is a real or a complex analytic space. 

For a polynomial monic in $z$, 
$F(x,z) = z^d+ \sum_{i =1}^d a_i(x) z^{d-i}$, with coefficients analytic functions in $x$, we denote by $\D_F(x)$ its discriminant, and by 
$\DD_F(x)$ its discriminant locus, the zero set of $\D_F(x)$.  
The discriminant of $F$, and more generally the generalized discriminants of $F$, are recalled in Appendix, Section \ref{sec:discriminants}.  

We say that $f\in \K\{x\}$ is \emph{a unit} if $f(0)\ne 0$. 
 We often use Weierstrass Preparation Theorem.  Recall briefly its statement, see for instance 
\cite[Theorem 2, p. 12] {narasimhan66}, \cite[Ch. 3, \S 2]{lojasiewiczbook} for more details. Let $F(x,z) \in \K\{x,z\}$ be regular in the variable $z$, that is $F(0,z)= z^d \unit (z)$.  Then there are $a_i(x) \in \K\{x\}$ such that $a_i(0)=0$  and 
\begin{align*}
F(x,z) = \unit (x,z)\, (z^d+ \sum_{i =1}^d a_i(x) z^{d-i}).
\end{align*}
We call the monic polynomial $z^d+ \sum_{i =1}^d a_i(x) z^{d-i}$, \emph{the Weierstrass polynomial associated to $F$}.  An analogous statement holds for formal power series, i.e. for $F(x,z) \in \K[[x,z]]$.

\subsection*{Acknowledgements.} 
I would like to thank Jean-Baptiste Campesato, Clint McCrory, Laurentiu Paunescu, and Guillaume Rond for several remarks and suggestions concerning the earlier versions of this survey. I would like to thank  as well  the anonymous referee for many precise and helpful suggestions.


\section{Equisingular families of plane curve singularities}\label{sec:curves}

We recall the notion of equisingular families of complex plane curve singularities. 
There are several equivalent definitions that are proposed by Zariski in 
\cite{zariski65-S1,zariski65-S2}. 
 We use the one based on the discriminant of a local projection.  Firstly, this is the definition that Zariski generalizes to the higher-dimensional case.  Secondly, by Puiseux with parameter theorem, it gives an equiparameterization of such singularities by fractional power series.

Let 
\begin{align}\label{eq:weierstrass-F}
F(t,x,y) = y^d+ \sum_{i =1}^d a_i(t,x) y^{d-i} 
\end{align}
be a unitary polynomial in $y\in \C$ with complex analytic coefficients $a_i(t,x)$,  defined on $U_{\varepsilon,r} = U_{\varepsilon} 
\times U_r $, where $ U_\varepsilon = \{t\in \C^l;  \|t\|< \varepsilon\}$, $U_r= \{x\in \C ; | x|<r\}$ .  
Here $t=(t_1, \ldots, t_l)$ is considered as a parameter.  
One also often assumes that $F$ is reduced (has no multiple factors) 
so that its discriminant  $\D_{F}$ is not identically equal to zero. 
For arbitrary $F$ we either consider $\D_{F_{red}}$ or, equivalently, the first not identically equal to zero generalized discriminant of $F$, 
see Appendix, Section \ref{sec:discriminants}. 

\begin{thm}[Puiseux with parameter] \label{thm:PuiseuxTheorem}
 Suppose that the discriminant of $F_{red}$   
is of the form  $\D_{F_{red}} (t,x) = x^M \unit (t,x)$ where $\unit (t,x)$ is a complex analytic function defined and nowhere vanishing on $U_{\varepsilon,r}$.  Then there is a positive integer $N$ and complex analytic functions $\tilde \xi_i(t,u) $ defined on $U_{\varepsilon} 
\times U_{r^{1/N}} $ such that 
$$
F(t,u^N,y) = \prod _{i=1}^d(y- \tilde \xi_i (t,u)) .  
$$  
Let $\theta $ be an $N$th root of unity.  Then for each $i$ there is $j$ such 
that $\tilde \xi_i ( t,\theta u) = \tilde \xi_j (t, u)$.  
\end{thm}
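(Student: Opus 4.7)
The plan is to view $V(F) \cap (U_{\varepsilon,r} \times \C)$ as a finite branched covering of $U_{\varepsilon,r}$ ramified only over $\{x=0\}$, trivialize it by pulling back along $x = u^N$, and then extend the resulting analytic sections across $\{u=0\}$ via Riemann's removable singularity theorem. First I would reduce to the case $F = F_{red}$: if $F = \prod_j F_j^{m_j}$, then the roots of $F(t,u^N,y)$ are just the roots of $F_{red}(t,u^N,y)$ repeated with multiplicities $m_j$, and the equivariance property under $\theta$ descends to $F$ from $F_{red}$. Under this reduction $\D_F(t,x) = x^M \unit(t,x)$ does not vanish on $Y := U_\varepsilon \times (U_r \setminus \{0\})$, so the projection $V(F) \to U_{\varepsilon,r}$ restricts to a finite unramified holomorphic covering of $Y$ of degree $d$.

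Next I would compute its monodromy. Since $U_\varepsilon$ is convex and $U_r \setminus \{0\}$ deformation-retracts onto a circle, $\pi_1(Y) \cong \Z$ is generated by a small positively oriented loop around $x = 0$, and the étale cover is classified by the image $\sigma \in S_d$ of this generator. I would choose $N$ to be a positive multiple of the order of $\sigma$ (say $N = d!$). The map $\phi_N : U_\varepsilon \times (U_{r^{1/N}} \setminus \{0\}) \to Y$, $(t,u) \mapsto (t, u^N)$, induces multiplication by $N$ on $\pi_1(Y)$, so the pullback cover is classified by $\sigma^N = \mathrm{id}$ and hence is trivial. It therefore splits into $d$ disjoint graphs of holomorphic functions $\tilde \xi_1, \ldots, \tilde \xi_d$ on $U_\varepsilon \times (U_{r^{1/N}} \setminus \{0\})$, satisfying $F(t, u^N, y) = \prod_{i=1}^d (y - \tilde \xi_i(t,u))$ on this punctured polydisk.

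To extend each $\tilde \xi_i$ across $\{u = 0\}$, Cauchy's root bound applied to the monic polynomial $F(t, u^N, y)$ gives $|\tilde \xi_i(t,u)| \le 1 + \max_j |a_j(t, u^N)|$, which is locally bounded near $\{u=0\}$ because the $a_j$ are analytic on $U_{\varepsilon,r}$. Since $\{u = 0\}$ is an analytic hypersurface, Riemann's removable singularity theorem extends each $\tilde \xi_i$ holomorphically to all of $U_\varepsilon \times U_{r^{1/N}}$, and the factorization extends by continuity. For the equivariance, any $N$th root of unity $\theta$ satisfies $F(t, u^N, y) = F(t, (\theta u)^N, y)$, which forces the multisets $\{\tilde \xi_i(t,u)\}$ and $\{\tilde \xi_i(t, \theta u)\}$ to agree; over the punctured polydisk the roots of $F_{red}$ are pairwise distinct, so the assignment $i \mapsto j$ with $\tilde \xi_i(t, \theta u) = \tilde \xi_j(t,u)$ is locally constant and hence, by connectedness of $U_\varepsilon \times (U_{r^{1/N}} \setminus \{0\})$, constant. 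The most delicate step is the monodromy trivialization through the choice of $N$; once that is in hand, the extension across $\{u=0\}$ and the Galois property are standard consequences of Riemann's theorem and connectedness.
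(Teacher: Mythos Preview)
Your proof is correct and follows essentially the same approach the paper sketches: view $V(F)$ as a finite covering of $U_\varepsilon \times U_r^*$, trivialize by the pullback $(t,u)\mapsto(t,u^N)$, and then extend the resulting sections across $\{u=0\}$ by Riemann's removable singularity theorem. Your write-up supplies the details the paper leaves implicit (the $\pi_1$-computation, the Cauchy bound for boundedness, and the connectedness argument for the Galois equivariance); the only cosmetic slip is that after reducing to $F_{red}$ the degree of the covering is $\deg F_{red}$ rather than $d$, but this does not affect the argument.
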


If $F$ is irreducible then one can take $N=d$.  In general, $N=d!$ always works, but it is not minimal.  

 If $M=0$ then, by the Implicit Function Theorem (IFT), the roots of $F$, which we denote by $\xi_1 (t,x),  ... , \xi_d(t,x)$,  are $\C$-analytic functions of $(t,x)$. 
 Moreover two such $\xi_i$ and $\xi_j$ either coincide or are distinct everywhere. 
In general, for arbitrary $M$, Theorem \ref{thm:PuiseuxTheorem} implies that the projection of 
the zero set $V=V(F)$ of $F$ onto $U_{\varepsilon}$, given by $(x,y,t)\to t$  is topologically trivial.  
To see it one may use the following corollary.

\begin{cor}\label{cor:Puiseuxcor1}
For $x_0$ fixed, the  roots of $F$,  $\xi_1 (t,x_0),  \ldots, \xi_d(t,x_0)$, can be chosen complex analytic  in $t$. 
Moreover, if $\xi_i(0,x_0)=\xi_j(0,x_0)$   then $\xi_i(t,x_0)\equiv \xi_j(t,x_0)$.   Thus the multiplicity of each $\xi_i(t,x_0)$ as a root of $F$ is independent of $t$. 
\end{cor}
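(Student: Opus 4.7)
My plan is to deduce both statements directly from the Puiseux-with-parameter factorization of Theorem \ref{thm:PuiseuxTheorem}. Choose $u_0 \in U_{r^{1/N}}$ with $u_0^N = x_0$ and set $\xi_i(t) := \tilde\xi_i(t, u_0)$ for $i = 1,\dots,d$. Specializing $F(t, u^N, y) = \prod_i (y - \tilde\xi_i(t,u))$ at $u = u_0$ exhibits the $\xi_i(t)$ as the roots of $F(t, x_0, y)$ counted with multiplicity, and analyticity in $t$ is inherited from the joint analyticity of the $\tilde\xi_i$. This gives the first assertion.

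For the second assertion, suppose $\xi_i(0) = \xi_j(0)$. If $\tilde\xi_i \equiv \tilde\xi_j$ on $U_\varepsilon \times U_{r^{1/N}}$ there is nothing to prove, so assume that $h(t,u) := \tilde\xi_i(t,u) - \tilde\xi_j(t,u)$ is not identically zero. The key claim is then $\{h = 0\} \subset \{u = 0\}$. Indeed, for any $u \ne 0$ one has $x = u^N \ne 0$, so by the hypothesis $\D_{F_{red}}(t,x) = x^M \unit(t,x)$ the roots of $F_{red}(t, u^N, y)$ in $y$ are pairwise distinct. Identically equal $\tilde\xi_k$'s correspond to a common root of $F_{red}$ repeated with the multiplicity of the underlying irreducible factor of $F$; hence two analytically distinct $\tilde\xi_i, \tilde\xi_j$ must represent different roots of $F_{red}$, and therefore take different values at every $(t,u)$ with $u \ne 0$.

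Now I invoke a standard dimension argument: $\{h = 0\}$ is a pure codimension one analytic subset of the $(l+1)$-dimensional polydisc $U_\varepsilon \times U_{r^{1/N}}$, and by the previous step it lies in the smooth irreducible hypersurface $\{u = 0\}$ of the same dimension; any irreducible component of $\{h = 0\}$ meeting $\{u = 0\}$ must therefore coincide with $\{u = 0\}$ locally. So once $h$ vanishes at a single point of $\{u = 0\}$, it vanishes on the whole slice $U_\varepsilon \times \{0\}$ by analytic continuation. Applying this: if $u_0 \ne 0$, the hypothesis $h(0,u_0) = 0$ contradicts $\{h = 0\} \subset \{u = 0\}$, so we must be in the excluded case $\tilde\xi_i \equiv \tilde\xi_j$, giving $\xi_i \equiv \xi_j$; if $u_0 = 0$, then $h(0,0) = 0$ forces $h(t,0) \equiv 0$, which is again $\xi_i \equiv \xi_j$. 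The multiplicity statement is then automatic, because the partition of $\{1,\dots,d\}$ by the relation $\xi_k(t) = \xi_i(t)$ does not depend on $t$. The step requiring most care is the non-crossing claim for $u \ne 0$, which has to be formulated in terms of $F_{red}$ rather than $F$ since $F$ is not assumed reduced.
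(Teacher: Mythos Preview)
Your proof is correct and rests on the same core idea as the paper's---deduce everything from the Puiseux-with-parameter factorization and control where $\tilde\xi_i-\tilde\xi_j$ can vanish---but the execution differs. The paper first reduces to $F$ reduced, disposes of the case $x_0\ne 0$ in one line by the IFT (the discriminant is a unit there), and for $x_0=0$ uses the algebraic observation that $\prod_{i<j}(\tilde\xi_i-\tilde\xi_j)^2=\D_F(t,u^N)=u^{NM}\cdot\unit$, so each difference divides a power of $u$ in the UFD $\C\{t,u\}$ and is therefore itself $u^k\cdot\unit$; vanishing at $(0,0)$ forces $k\ge 1$, whence $\tilde\xi_i(t,0)\equiv\tilde\xi_j(t,0)$. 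Your geometric codimension argument reaches the same conclusion (if $\{h=0\}$ is nonempty it must coincide with the slice $\{u=0\}$) but is longer and forces you to handle the non-reduced case by hand---the extra care with $F_{red}$ that you flag. The paper's route is more concise; yours has the minor virtue of treating $x_0=0$ and $x_0\ne 0$ uniformly via the same parametrization.
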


\begin{proof}
It suffices to show it for $F$ reduced.  Then for $x_0\ne 0$ it follows from the IFT.  Let us show it for $x_0=0$.  
The family $\xi_1 (t,0),  \ldots, \xi_d(t,0)$ coincides (as unordered sets) with $\tilde \xi_1 (t,0),  \ldots, \tilde \xi_d(t,0)$.  
If $\tilde \xi_i(0,0)= \tilde \xi_j(0,0)$ then $\tilde \xi_i(t,u)- \tilde \xi_j(t,u)$ is 
either identically zero or divides $u^{NM}$ and hence equals 
a power of $u$ times a unit.  
\end{proof}

Using Corollary \ref{cor:Puiseuxcor1} we may trivialize topologically $V$ with respect the parameter $t$ by 
\begin{align*}
\Ph (t, x, \xi_i(0,x)) = (t, x, \xi_i(t,x)) \quad i=1, \ldots, d.
\end{align*}
The map $\Ph$ is, by Corollary \ref{cor:Puiseuxcor1}, complex analytic in $t$ and  one can show, moreover, that it is a local homeomorphism. 
(It follows, for instance, from much more general \cite[Theorem1.2]{PP17}.)

The parameterized Puiseux Theorem, Theorem \ref{thm:PuiseuxTheorem}, can be proven in the same way as the classical  Puiseux Theorem by considering the finite covering of $V(F)$ over 
$U_{\varepsilon} \times  U_r^*$, where $U_r^*= U_r \setminus \{0\}$.  Then, for a positive integer $N$, the pullback of this covering by $(t,u) \to (t, u^N)=(t,x)$ is trivial, its sheets define the roots $\tilde \xi_i(t,u) $  that extend  analytically to 
$U_{\varepsilon} \times  \{0\}$ by Riemann's Removable Singularity Theorem.  For details 
we refer for instance to \cite{pawlucki84}. 
 Note also that this theorem is a special case of the Jung-Abhyankar Theorem, see \cite{jung1908}, \cite{abhyankar55}, that in complex analytic case can be proven exactly along the same lines, see \cite{PR2012} Proposition 2.1.

\begin{thm}[Jung-Abhyankar]\label{thm:Jung}
Let $\kk$ be an algebraically closed field of characteristic zero and let 
$f\in \kk[[x_1, \ldots ,x_{r+1}]]$ be of the form 
\begin{align*}
f(x_1, \ldots ,x_{r+1}) = x_{r+1}^d+ \sum_{i =1}^d a_i(x_1, \ldots ,x_{r}) x_{r+1}^{d-i}. 
\end{align*} 
Suppose the discriminant $\D_f$ of $f$ equals a monomial $\prod_{i=1}^k x_i ^{n_i}$ times a unit.  Then the roots of $f$  are fractional power series in $x_1, \ldots ,x_{r}$, 
More precisely, there is a positive integer $N$, such that the roots of 
$f(u_1^N, \ldots , u_k^N, x_{k+1}, \dots , x_{r+1})$ belong to 
$\kk[[u_1, \ldots , u_k, x_{k+1}, \dots , x_{r}]]$.    
\end{thm}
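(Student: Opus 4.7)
The plan is to transcribe the proof of Theorem~\ref{thm:PuiseuxTheorem} to the formal algebroid setting, replacing the analytic finite covering by an étale extension of (formal) schemes and Riemann's removable singularity theorem by the strict henselianity of complete local rings with algebraically closed residue field. A first reduction makes $f$ irreducible: the discriminant of any factor in $\kk[[x_1,\ldots,x_r]][x_{r+1}]$ divides $\D_f$ up to a unit, hence is again a monomial in $x_1,\ldots,x_k$ times a unit, so one can treat each factor separately.

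Set $A = \kk[[x_1,\ldots,x_r]]$, $K = \operatorname{Frac}(A)$, and let $L = K[x_{r+1}]/(f)$, a finite separable extension of degree $d$. The hypothesis on $\D_f$ says that $A[x_{r+1}]/(f)$ is étale over $A$ after inverting $x_1\cdots x_k$, so the branch divisor of the normalization of $A$ in $L$ is contained in the normal-crossings divisor $\{x_1\cdots x_k = 0\}$. Since $\operatorname{char}\kk = 0$ the ramification is tame, so Abhyankar's lemma applies: for $N$ divisible by every ramification index (for instance $N = d!$), the base change of $L$ along the map $A \hookrightarrow B := \kk[[u_1,\ldots,u_k,x_{k+1},\ldots,x_r]]$, $x_i \mapsto u_i^N$ for $i \le k$, becomes unramified; by purity of the branch locus it is in fact étale over all of $B$.

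The ring $B$ is complete local with algebraically closed residue field $\kk$, hence strictly henselian, so every finite étale $B$-algebra splits as a product of copies of $B$. Applied to $B[x_{r+1}]/\bigl(f(u_1^N,\ldots,u_k^N,x_{k+1},\ldots,x_r,x_{r+1})\bigr)$ this produces a complete factorization
\[
f(u_1^N,\ldots,u_k^N,x_{k+1},\ldots,x_r,x_{r+1}) = \prod_{i=1}^d (x_{r+1} - \tilde\xi_i), \qquad \tilde\xi_i \in B,
\]
which is exactly the desired fractional-power-series form of the roots of $f$.

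The main obstacle is the second paragraph: making the appeal to Abhyankar's lemma rigorous requires identifying the ramification indices at the generic points of the components $\{x_i = 0\}$ and checking that they all divide the chosen $N$, together with the purity statement needed to extend étaleness across $\{u_1 \cdots u_k = 0\}$ after the $N$-th root cover. In the analytic proof of Theorem~\ref{thm:PuiseuxTheorem} this entire package is replaced transparently by the observation that a finite unramified cover of a punctured polydisc is trivialized by an $N$-th-power base change, followed by Riemann's extension theorem across the coordinate hyperplanes; the algebroid argument mirrors this step by step, simply substituting Abhyankar's lemma, purity, and strict henselianity of $B$ for the corresponding analytic facts.
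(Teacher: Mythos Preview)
The paper does not give its own proof of Theorem~\ref{thm:Jung}; it is stated with references to Jung and Abhyankar, and the surrounding text only remarks that the complex analytic case can be proven ``exactly along the same lines'' as the covering-space argument sketched for Theorem~\ref{thm:PuiseuxTheorem}. Your proposal is therefore not competing with a proof in the paper but rather supplying one, and it does exactly what the paper hints at: it transcribes the analytic argument (finite cover of a punctured polydisc, trivialization after an $N$-th power base change, Riemann extension) into the algebroid language (\'etale cover away from a normal-crossings divisor, Abhyankar's lemma, purity, strict henselianity). In that sense the approach is the natural one and is essentially correct.

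There is one genuine imprecision in your last step. The ring $B[x_{r+1}]/\bigl(f(u_1^N,\ldots,u_k^N,x_{k+1},\ldots,x_{r+1})\bigr)$ is \emph{not} \'etale over $B$: its discriminant is the pullback of $\D_f$, namely a unit times $\prod_i u_i^{N n_i}$, which vanishes along the coordinate hyperplanes whenever some $n_i>0$. What Abhyankar's lemma together with purity actually gives you is that the \emph{normalization} $C$ of $B$ in $L\otimes_K\operatorname{Frac}(B)$ is \'etale over $B$. Strict henselianity of $B$ then forces $C\cong B^d$, so $f$ has $d$ roots in $\operatorname{Frac}(B)$; since $f$ is monic with coefficients in $B$ and $B$ is normal, those roots lie in $B$, yielding the desired factorization. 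Once you phrase the final step this way the argument is complete.
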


\subsection{Equisingular families of plane curve singularities. Definition.}
Let us fix a local projection $\pr: \C^{l}\times \C^2\to \C^l$ and suppose that $F$ is a complex analytic function defined 
in a neighborhood of the origin in $\C^{l}\times \C^2$ and vanishing 
identically on $T=\C^l \times \{0\}$.  We assume that $F$ is reduced and consider its zero set 
$V=V(F)=F^ {-1}(0)$ as a family of plane curve singularities 
\begin{align*}
t \rightsquigarrow (V_t,0)= (V\cap \pr^{-1} (t),0)
\end{align*}
 parameterized by $t\in (\C^l,0)$.  
 We say that a local system of coordinates 
$t_1, \ldots, t_l,x,y$ is \emph{$\pr$-compatible}  if $\pr (t,x,y) = t$, where $t=(t_1, \ldots, t_l)$, and $T=\{x=y=0\}$.  
Suppose that in such a system of coordinates $F$ is regular in variable $y$, 
i.e. $F(0,0,y) \not \equiv 0$. 
Then, by the Weierstrass Preparation Theorem, we may assume that, up to a multiplication by an analytic unit, $F$ is of the form \eqref{eq:weierstrass-F} with all $a_i(0,0) =0$. 
Note also that because $T\subset V(F)$, we have $F(t,0,0)\equiv 0$.

\begin{defn}\label{def:equising-curves}
We say that $V = V(F)$ is 
\emph{an equisingular family of plane curve singularities} if there are a $\pr$-compatible 
system of coordinates $t,x,y$, such that $F$ is regular in variable $y$, and a non-negative integer $M$, such that the discriminant  $\D_F (t,x)$ of $F$ 
is of the form 
\begin{align}\label{eq:discriminant}
\D_F (t,x) = x^M \unit(t,x). 
\end{align}  
\end{defn}

Equisingular families of plane curve singularities were studied in the 
algebroid set-up (i.e. defined by  $F$ being a formal power series) over an algebraically closed field of characteristic zero by Zariski  
\cite{zariski65-S1,zariski65-S2,zariski68-S3} mainly by means of 
(equi)resolution.  All the results of \cite{zariski65-S1,zariski65-S2,zariski68-S3}, properly stated, are valid for the complex analytic case. In particular, Zariski has shown that in such families the special fiber $(V_0,0)$ and the generic fiber $(V_{t_{gen}}, 0)$ are equivalent plane curve singularities, see 
\cite[section 6]{zariski65-S1} and \cite[section 3]{zariski65-S1} for several equivalent definitions of equivalent plane curve singularities. 
In the complex analytic set-up, two complex plane curve singularities are equivalent if and, only if they are ambient topologically equivalent. 
By \cite[Theorem 7]{zariski65-S1}, Zariski equisingular families of plane curve singularities are equimultiple, that is to say $\mult_{(t,0,0)} V$ is independent of $t$. If this multiplicity equals $d= \deg_y F$, then we say that 
the associated projection $\pi (x,y,t) = (x,t)$ is \emph{transverse}. Geometrically it means that the kernel of $\pi$ is not included in the tangent cone $C_0(V)$.  
Because the equimultiple families are normally pseudo-flat 
(continuity of the tangent cone),  it is enough to check the transversality 
for the special fiber $V_0$ and if it holds for the special fiber then it holds also for the generic one.  Zariski shows in Theorem 7 of \cite{zariski65-S1} also the following result.

\begin{thm}[\cite{zariski65-S1}, Theorem 7]
\label{thm:equisingulacurves}
If a family of plane curve singularities is equisingular (for a not necessarily transverse projection) then it is equisingular for all transverse projections. 
\end{thm}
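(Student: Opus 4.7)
The plan is to transport the parameterized Puiseux data provided by Theorem~\ref{thm:PuiseuxTheorem} through the change of projection, first extracting from the given (possibly non-transverse) projection the intrinsic structure of the branches of $V$ and then tracking that structure under passage to the transverse projection. I would begin by applying Theorem~\ref{thm:PuiseuxTheorem} to $F$: the hypothesis $\D_F(t,x)=x^M\unit(t,x)$ produces analytic roots $\tilde\xi_1(t,u),\dots,\tilde\xi_d(t,u)$ on $U_\varepsilon\times U_{r^{1/N}}$ with $u^N=x$, and together with the action of the $N$th roots of unity these organize $V$ into its irreducible branches. Using the standard expression of $\D_F(t,u^N)$ as the product of squared pairwise differences of roots, the $u$-order of this product at $u=0$ equals the constant $NM$. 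Since each $c_{ij}(t):=\ord_u(\tilde\xi_i-\tilde\xi_j)(t,u)$ is upper semicontinuous in $t$ and their sum is constant, each $c_{ij}$ itself is independent of $t$, so every pairwise difference factors as $u^{c_{ij}}\unit(t,u)$. Combined with Corollary~\ref{cor:Puiseuxcor1}, this shows that the multiplicity $m_k$ of each branch at the origin is independent of $t$; thus $V$ is equimultiple along $T$, and by normal pseudo-flatness of equimultiple families the tangent cones $C_0(V_t)$ vary continuously.

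Next, fix a transverse projection $\pi'$ in compatible coordinates $(t,x',y')$ with Weierstrass polynomial $F'(t,x',y')$. Equimultiplicity together with continuity of the tangent cone implies that $\pi'$ is transverse to every $V_t$ for $t$ near $0$, so $\deg_{y'}F'=\mult_0V_0=d'$ is constant. For each branch, the pullback of $x'$ along the parameterization $u\mapsto(t,u^N,\tilde\xi_k(t,u))$ is an analytic function $\phi_k(t,u)$ whose $u$-order at $0$ reflects the branch multiplicity $m_k$ — this is the analytic translation of transversality of $\pi'$, since a non-transverse direction would annihilate the leading term of some branch. Applying the Weierstrass Preparation Theorem with $t$ as a parameter lets me invert $\phi_k(t,u)=x'$ on each branch so as to express the branch, after a further $m_k$-th root substitution, as a parameterized Puiseux series $y'=\tilde\eta_k(t,v)$ with $v^{m_k}=x'$, where the inversion $u=u_k(t,v)$ is itself a monomial in $v$ times an analytic unit.

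Finally, $\D_{F'}(t,x')$ equals, up to a unit, the product over all pairs of squared differences of the roots of $F'$ in $y'$. Passing to a common Puiseux cover $v^{M_*}=x'$ over which every branch becomes analytic, each such difference becomes, through the inversions of the previous step, a pairwise difference of old Puiseux series $\tilde\xi$ pulled back along monomial-times-unit reparameterizations; since both the old differences and these inversions have monomial-times-unit form, each new pairwise difference again factors as $v^{c'_{ij}}\unit(t,v)$ with $c'_{ij}$ independent of $t$. Multiplying over pairs and descending from $v$ to $x'$ yields the required $\D_{F'}(t,x')=(x')^{M'}\unit(t,x')$. The main obstacle is precisely this last bookkeeping: verifying that the change of Puiseux parameter preserves the monomial-times-unit form of pairwise differences \emph{uniformly in $t$}, which amounts to a parameterized version of the classical transformation rule for discriminants under a change of projection. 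The necessary uniformity is exactly what the parameterized Puiseux and Weierstrass Preparation Theorems are designed to supply.
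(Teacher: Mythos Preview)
The survey does not prove this theorem; it simply quotes it from Zariski's paper \cite{zariski65-S1}, where the argument is carried out by equiresolution (successive blowings-up of the equimultiple locus and induction on the resolution tree). Your Puiseux-with-parameter approach is therefore a genuinely different route, and the first two thirds of it are sound: the semicontinuity-plus-constant-sum trick does pin down every $c_{ij}$, and from this one can indeed extract constancy of the branch multiplicities, of the pairwise intersection numbers, and of the characteristic exponents --- in other words, constancy of the full equisingularity class of $(V_t,0)$. Equimultiplicity and normal pseudo-flatness then make any transverse $\pi'$ uniformly transverse along $T$, as you say.

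The gap is in your last paragraph. The sentence ``each such difference becomes \ldots\ a pairwise difference of old Puiseux series $\tilde\xi$ pulled back along monomial-times-unit reparameterizations'' is not correct when the two roots lie on distinct branches. The inversion $u=u_k(t,v)$ depends on $k$, so a difference $\tilde\eta_i-\tilde\eta_j$ with $i$ on branch $k$ and $j$ on branch $l$ is $\psi_k(t,u_k(t,w^{a_k}))-\psi_l(t,u_l(t,w^{a_l}))$: the two old roots are being evaluated at \emph{different} values of $u$ (the two points of $V_t$ over the same $x'$ generally have different $x$-coordinates), and there is no single reparameterization through which $\tilde\xi_k-\tilde\xi_l$ reappears. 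The monomial-times-unit conclusion does not follow from the mechanism you describe.

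The clean repair is to abandon the pairwise bookkeeping and argue with the total exponent. You have already established that the equisingularity data of $(V_t,0)$ are independent of $t$; for a transverse projection the order $M'(t):=\ord_{x'=0}\D_{F'}(t,x')$ is a fixed combinatorial function of exactly these data (for instance via the Milnor--Teissier formula $M'=\mu+m-1$), hence constant. Now apply Weierstrass preparation to $\D_{F'}$ in the variable $x'$: the associated Weierstrass polynomial has degree $M'(0)=M'$, and since $\ord_{x'=0}\D_{F'}(t,\cdot)\ge M'$ for every $t$, all its roots stay at $0$, giving $\D_{F'}(t,x')=(x')^{M'}\unit(t,x')$. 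Equivalently, you can rerun your own semicontinuity-plus-constant-sum argument on the \emph{new} pairwise orders $c'_{ij}(t)$, the constant sum now being supplied by the invariance of $M'$ that you have just proved.
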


Note that if $V=V(F)$ is equisingular then the singular locus $\Sing V$ of $V$ is 
$T=\C^l \times \{0\}$.  In \cite[section 8]{zariski65-S2} Zariski shows that a 
family $V(F)$ of plane curve singularities is equisingular if and only if  
$\Sing V = T$ and $V\setminus T , T$ is a Whitney stratification of $V$. In the complex analytic case, this gives another proof of the fact that the equisingular families of plane curve singularities are topologically trivial and the following holds.     


\begin{cor}\label{cor:Puiseuxcor2} 
The Puiseux pairs of the roots $\xi_i(t,x)$ and the contact exponents between different branches of $V_t$ are independent of $t$.  
\end{cor}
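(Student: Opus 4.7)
The plan is to derive the statement directly from the analytic parameterization $\tilde\xi_i(t,u)$ provided by Theorem \ref{thm:PuiseuxTheorem}, by showing that every $u$-adic order that enters the definition of a Puiseux pair or a contact exponent is forced to be constant in $t$ by the shape of the discriminant in Definition \ref{def:equising-curves}.

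First I would reduce to the case $F$ reduced (this does not change the branches of $V_t$) and write $F(t,u^N,y)=\prod_{i=1}^d(y-\tilde\xi_i(t,u))$ on $U_\varepsilon\times U_{r^{1/N}}$. The classical identity for the discriminant of a separable polynomial yields
$$
\D_F(t,u^N)=\pm\prod_{i<j}\bigl(\tilde\xi_i(t,u)-\tilde\xi_j(t,u)\bigr)^{2},
$$
while the equisingularity assumption $\D_F(t,x)=x^{M}\unit(t,x)$ makes the left-hand side equal to $u^{NM}$ times an analytic function that is nowhere zero on $U_\varepsilon\times U_{r^{1/N}}$. Each factor $\tilde\xi_i(t,u)-\tilde\xi_j(t,u)$ is analytic on that polydisc and, since its square divides a unit times $u^{NM}$, its zero locus is contained in $\{u=0\}$. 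A direct Weierstrass-type argument (or Riemann's removable singularity theorem applied to the quotient by the appropriate power of $u$) then shows that
$$
\tilde\xi_i(t,u)-\tilde\xi_j(t,u)=u^{k_{ij}}\,v_{ij}(t,u),
$$
with $v_{ij}$ a nowhere-vanishing analytic unit and $k_{ij}\in\N$ \emph{independent of $t$}.

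Next I would translate the constancy of the integers $k_{ij}$ into the geometric statement. The contact exponent between the branches through $\xi_i(t,x)$ and $\xi_j(t,x)$, in the usual Puiseux-series sense, is precisely $k_{ij}/N$, hence independent of $t$. For the Puiseux pairs of a single irreducible branch of $V_t$, I would use the fact that such a branch corresponds to an orbit of sheets $\{\tilde\xi_i\}$ under the Galois action $u\mapsto\theta u$, and that its Puiseux pairs are encoded by the collection of $u$-orders $\ord_u\bigl(\tilde\xi_i(t,u)-\tilde\xi_i(t,\theta u)\bigr)$ as $\theta$ ranges over the $N$-th roots of unity. By the last assertion of Theorem \ref{thm:PuiseuxTheorem}, each such difference equals $\tilde\xi_i(t,u)-\tilde\xi_{\sigma(i)}(t,u)$ for some permutation index $\sigma(i)$, whose $u$-order is the constant $k_{i,\sigma(i)}$ already shown to be $t$-independent.

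The only real subtlety I expect is the bookkeeping needed to give a continuous meaning to "a given branch" and "a given pair of branches" across the family: one must label the branches of $V_t$ consistently in $t$ so that "its Puiseux pairs" and "the contact exponent between two of them" are well-defined as $t$ varies. Corollary \ref{cor:Puiseuxcor1} and the global analytic parameterization $\tilde\xi_i(t,u)$ together supply such a coherent labelling (branches of $V_t$ correspond to orbits of the $\theta$-action on the index set, and this action does not depend on $t$). Once that labelling is fixed, the argument above identifies each Puiseux pair and each contact exponent with a combination of the constants $k_{ij}$, which are independent of $t$.
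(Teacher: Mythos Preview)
Your argument is correct, but it takes a different route from the paper. The paper does not give a stand-alone proof of this corollary: it is stated as a consequence of the topological triviality of the family $V_t$ (obtained either from the explicit trivialization $\Phi$ built after Corollary~\ref{cor:Puiseuxcor1}, or from Zariski's result that $(V\setminus T,T)$ is a Whitney stratification), together with the classical fact that the Puiseux pairs and contact exponents of a plane curve germ are topological invariants. Your approach is more direct and self-contained: you bypass topological triviality entirely and read off the constancy of the $k_{ij}$ from the factorization of $\D_F(t,u^N)$ in the UFD $\C\{t,u\}$, which is essentially the content of Theorem~\ref{thm:Jung2} stated a few lines later in the paper. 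What you gain is an elementary argument that does not appeal to the (nontrivial) theorem that Puiseux pairs are topological invariants; what the paper's route gains is that it fits the corollary into the broader picture of topological equisingularity that the survey is developing.
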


In the complex analytic set-up, in \cite{teissier77}[p. 623] B. Teissier gives 12 characterizations of equisingular families of plane curve singularities, including (equi)resolutions,  constancy of Milnor number, Whitney's conditions, and topological triviality.

\subsection{Equisingular families of plane curve singularities and Puiseux with parameter.} \label{ssec:puiseux-with-par}
The Puiseux with parameter theorem, Theorem \ref{thm:PuiseuxTheorem}, gives the following 
criterion of equisingularity of families of plane curve singularities.

\begin{thm}\label{thm:Jung2}
Let $F$ be reduced and of the form \eqref{eq:weierstrass-F} in a $\pr$-compatible system of coordinates $t,x,y$. We also assume $a_i(0,0)=0$ for all $i$.  Then $V(F)$ is an equisingular family of plane curve singularities for this system of coordinates if and only if there are $\tilde \xi_i \in \C\{t,u\}, i=1,\ldots ,d $, and strictly positive integers $N$, $k_{ij}, i<j$, such that 
\begin{align}
F(t,u^N,y) = \prod_{i=1}^d (y- \tilde \xi_i(t,u)) 
\end{align}
and $\tilde \xi_i - \tilde \xi_j = u^{k_{ij}} \unit (t,u) $ or $\xi_i$ and $\xi_j$ coincide everywhere  (the latter possibility may occur 
only if $F$ is not reduced). 
\end{thm}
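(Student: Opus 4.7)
The plan is to derive both directions from the parametrized Puiseux Theorem (Theorem \ref{thm:PuiseuxTheorem}) together with the classical identity $\D_F(t,u^N)=\pm\prod_{i<j}(\tilde\xi_i-\tilde\xi_j)^2$ expressing the discriminant in terms of the root differences, and the fact that $\C\{t,u\}$ is a unique factorization domain in which $u$ is a prime element.

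For the forward implication, assume the equisingularity condition $\D_F(t,x)=x^M\,\unit(t,x)$. Since $F$ is reduced, $F = F_{red}$ and Theorem \ref{thm:PuiseuxTheorem} produces an integer $N\geq 1$ and analytic $\tilde\xi_i(t,u)\in\C\{t,u\}$ with $F(t,u^N,y)=\prod_{i=1}^d(y-\tilde\xi_i(t,u))$. Substituting $x=u^N$ in the discriminant identity yields
\[
\prod_{i<j}(\tilde\xi_i-\tilde\xi_j)^2 \;=\; \pm\,\D_F(t,u^N) \;=\; u^{NM}\,\unit(t,u).
\]
Because $F$ is reduced, the factors on the left are nonzero, and the UFD structure together with the primality of $u$ forces each $\tilde\xi_i-\tilde\xi_j$ to be of the form $u^{k_{ij}}\,\unit(t,u)$ for some $k_{ij}\geq 0$: any other irreducible factor would survive squaring and contradict the right-hand side. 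The hypothesis $a_i(0,0)=0$ makes each $\tilde\xi_i(0,0)$ a root of $y^d$ and hence zero, so $\tilde\xi_i-\tilde\xi_j$ vanishes at the origin and $k_{ij}\geq 1$.

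For the converse, start directly from the factorization hypothesis and compute
\[
\D_F(t,u^N) \;=\; \pm\prod_{i<j}(\tilde\xi_i-\tilde\xi_j)^2 \;=\; u^{K}\,\unit(t,u), \qquad K:=2\sum_{i<j}k_{ij}.
\]
The series $\D_F(t,u^N)$ lies in the subring $\C\{t,u^N\}\subset\C\{t,u\}$, so only $u$-monomials of degree divisible by $N$ can appear; in particular $N\mid K$, say $K=NM$, and the unit factor likewise belongs to $\C\{t,u^N\}$. Writing $\D_F(t,x)=x^{M}h(t,x)$ and pulling back via $x=u^N$ gives $h(t,u^N)=\unit(t,u)$, so $h(0,0)\neq 0$ and Definition \ref{def:equising-curves} is satisfied.

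The main substantive step is the UFD argument in the forward direction, which must upgrade information about a product of squares to information about each individual root difference $\tilde\xi_i-\tilde\xi_j$. The converse is essentially bookkeeping, with the only mildly delicate point being the divisibility $N\mid K$, a consequence of $\D_F(t,u^N)$ sitting inside $\C\{t,u^N\}$.
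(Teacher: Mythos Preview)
Your argument is correct and is precisely the proof the paper has in mind: the paper does not spell out a proof of this theorem, but the key step---that each $\tilde\xi_i-\tilde\xi_j$ divides $u^{NM}$ in the UFD $\C\{t,u\}$ and is therefore a power of $u$ times a unit---is exactly the argument given in the paper's proof of Corollary~\ref{cor:Puiseuxcor1}. Two minor remarks: with the paper's convention (Appendix) there is no sign in $\D_F=\prod_{i<j}(\xi_i-\xi_j)^2$, so you may drop the $\pm$; and in the converse your divisibility claim $N\mid K$ can be justified more explicitly by comparing the coefficient of $u^K$ on both sides of $\sum_m c_m(t)u^{Nm}=u^K g(t,u)$, which also forces $c_{K/N}(0)=g(0,0)\neq 0$ and hence that $h$ is a unit.
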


The above observation implies, in particular, Corollary \ref{cor:Puiseuxcor1}.  
We also note that it implies that all $a_i $ of  \eqref{eq:weierstrass-F} satisfy $a_i(t,0)\equiv 0$.  
Indeed, by the assumption $T\subset V(F)$  there is a root $\tilde \xi_{j}  $ of $F$ such that 
$\tilde \xi_{j} (t,0) \equiv 0 $.  Since $\tilde \xi_{i} (0,0) =0  $ for all $i$ the 
last claim of the above theorem implies our assertion.


\section{Zariski equisingularity in families}\label{sec:infamilies}

Zariski equisingularity of families of singular varieties was introduced by 
Zariski in \cite{zariski71open} in the context of equisingularity of a hypersurface along a smooth subvariety, that we discuss in Section \ref{sec:ZEalong}.  This is a direct generalization of Definition 
\ref{def:equising-curves} but instead of a single co-rank one projection one considers a system of such subsequent projections.   It can be formulated over any field, in particular, in the analytic case over $\K=\R$ or $\C$.  
Recall that for $x=(x_1, \ldots, x_n) \in \K^n$ we denote $x^i = (x_1, \ldots, x_i) \in \K^i$.  

\begin{defn}\label{def:system}
 By a \emph{local system  of pseudopolynomials in $x=(x_1,... ,x_n)\in \K^n$ at $(0,0)\in \K^l\times \K^n$, with a parameter $t\in U \subset \K^l$}, we mean a family  of $\K$-analytic  functions 
  \begin{align}\label{def:pseudopolynomials}
  F_{i} (t, x^i )= x_i^{d_i}+ \sum_{j=1}^{d_i} a_{i-1,j} (t,x^{i-1})
 x_i^{d_i-j},  \quad i=0, \ldots,n, 
\end{align} 
defined on $U\times U_i$, where  $U_i $ 
is a neighborhood of the origin in $\K^i$, with the coefficients $a_{i,j}$ vanishing identically on $T=U\times \{0\}$.  
This includes $d_i=0$, in which case we mean $F_i\equiv 1$.    
\end{defn}

\begin{defn}\label{def:ZAinfamilies}
Let $V= F^{-1}(0)$ be an analytic hypersurface in a neighborhood 
of the origin in $\K^l \times  \K^n$.  
We say that $V$ is \emph{Zariski equisingular with respect to the 
parameter $t$ (and the system of coordinates $x_1, \ldots, x_n$)} 
 if there are $k\ge 0$ and a system of pseudopolynomials $F_{i} (t, x^i )$ such that 
\begin{enumerate}
\item 
$F_n$ is the Weierstrass polynomial associated to $F$.
\item
for every $i$, $k\le i\le n-1$, the discriminant of $(F_{i+1})_{red}$ (or, equivalently, the first not identically equal to zero generalized discriminant of $F_{i+1}$, see Appendix, Section \ref{sec:discriminants}) divides $F_{i}$.     
\item
$F_k \equiv 1$  (and then we put $F_i \equiv 1$ 
for all $0\le i< k$).
\end{enumerate}
\end{defn}

\begin{rem}
In the above definition, we suppose that the system of local coordinates 
$x_1, \ldots, x_n$ is fixed.  Of course one may say that $V$ is \emph{Zariski equisingular with respect to the parameter $t$}, if such a system exists.  This raises a variety of interesting 
 questions, for instance, how to check whether such a system exists.  We will discuss it in Section \ref{sec:ZEalong} in a slightly different set-up, Zariski singularity along a nonsingular subspace.  
\end{rem}

\begin{rem} 
In the original definition of Zariski equisingularity \cite{zariski71open} and also in \cite{varchenkoizv72}, \cite{varchenkoICM75}, the condition 2. was stated in an apparently  more restrictive way:
\begin{enumerate}
\item [2'.]
for every $i$, $k\le i\le n-1$,   $F_{i}$  is the Weierstrass polynomial associated to  the discriminant of $(F_{i+1})_{red}$.     
\end{enumerate}
The definition given here comes from \cite{PP17}   
and is often easier to work with than the original one. Probably, both definitions are equivalent. 
\end{rem}

\subsection{Topological equisingularity and topological triviality}\label{ssec:topequising}


In \cite{zariski71open} Zariski asked the following question: \medskip

\noindent
\emph{Does algebro-geometric equisingularity 
(i.e. Zariski equisingularity), in complex analytic case, imply topological equisingularity or even differential equisingularity?}
\medskip

 By the latter one, Zariski meant Whitney's conditions (a) and (b). The answer to this part of Zariski's question depends on how generic the system of coordinates giving Zariski equisingularity is, or equivalently how generic the projection defining the subsequent discriminants are, see section \ref{ssec:relationto} below.  In 1972 Varchenko \cite{varchenkoizv72} gave the affirmative answer to the first part of the question, see also  
\cite{varchenko73} and \cite{varchenkoICM75} for the statement of results.  

\begin{thm}\label{thm:topological_triviality} 
Suppose that $V$ is Zariski equisingular with respect to the parameter 
$t$.  Then there are 
neighborhoods $U$ of the origin in $\K^l$, $\Omega_0$ of the origin in $\K^n$, and $\Omega$ of the origin in $\K^{l+n}$, and  a 
homeomorphism 
\begin{align}\label{eq:homeomorphismPh}
\Ph  : U \times \Omega_0 \to \Omega, 
\end{align}
 such that 
 \begin{enumerate}
\item  [\rm (i)]
$\Ph (t, 0) =  (t,0) $,  $\Ph (0, x_1, \ldots, x_n) =  (0, x_1, \ldots, x_n) $;
\item [\rm (ii)]
$\Ph$ has a triangular form 
\begin{align}\label{eq:Phitriangular} 
\Ph (t, x_1, \ldots , x_n) = (t, \Ps_1(t, x_1), \ldots , \Ps_{n-1} (t,x_1, \ldots , x_{n-1}), \Ps_{n} (t, x_1, \ldots , x_{n}) );
\end{align}
\item [\rm (iii)]
$\Ph(U\times (V\cap \Omega_0))=V\cap \Omega$.  
\end{enumerate}
\end{thm}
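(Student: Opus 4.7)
The plan is to construct $\Ph$ by induction on $j$, building the components $\Ps_{k+1}, \Ps_{k+2}, \ldots, \Ps_n$ one at a time, so that after $j-k$ steps the partial map
\begin{equation*}
\Ph_j(t, x^j) := (t, \Ps_1(t, x_1), \ldots, \Ps_j(t, x^j))
\end{equation*}
is a germ of a homeomorphism at the origin of $\K^l \times \K^j$ satisfying the analogues of (i)--(iii) for the zero set of $F_i$ with $i \le j$. The base of the induction is immediate: because $F_i \equiv 1$ for $i \le k$, we set $\Ps_i(t, x^i) = x_i$, and $\Ph_k$ is the identity.

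For the inductive step, assume $\Ph_{j-1}$ has been built. Pull back $F_j$ by $\Ph_{j-1}$ to obtain the pseudopolynomial
\begin{equation*}
\hat F_j(t, x^{j-1}, y) := F_j(\Ph_{j-1}(t, x^{j-1}), y),
\end{equation*}
again monic of degree $d_j$ in $y$. Since the discriminant of $(F_j)_{red}$ in $y$ divides $F_{j-1}$, its pullback $\D_{(\hat F_j)_{red}}(t, x^{j-1})$ divides $F_{j-1} \circ \Ph_{j-1}$; by the inductive trivialization property, the zero set of the latter is the cylinder $U \times \{x^{j-1} : F_{j-1}(0, x^{j-1}) = 0\}$. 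In particular the locus where $\hat F_j$ has multiple roots in $y$ is independent of $t$, and outside this locus the $d_j$ roots of $\hat F_j(t, x^{j-1}, \cdot)$ are pairwise distinct and depend analytically on $(t, x^{j-1})$.

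To build $\Ps_j$, apply the Puiseux with parameter theorem (Theorem \ref{thm:PuiseuxTheorem}) with $t$ as the parameter. After Weierstrass preparation with respect to $x_{j-1}$ and a ramified cover $x_{j-1} = u^N$ aligning the discriminant locus of $\hat F_j$ with a monomial in $u$ times a unit, this produces analytic roots $\tilde\xi_1(t, u), \ldots, \tilde\xi_{d_j}(t, u)$ together with a coherent labelling invariant under the Galois action of the $N$-th roots of unity. Using this labelling, each root $\xi_i(0, x^{j-1})$ of $F_j(0, x^{j-1}, \cdot)$ determines a matched root $\xi_i(t, x^{j-1})$ of $\hat F_j(t, x^{j-1}, \cdot)$, and we define $\Ps_j$ by setting $\Ps_j(t, x^{j-1}, \xi_i(0, x^{j-1})) := \xi_i(t, x^{j-1})$ and extending in $y$ by piecewise-affine interpolation between consecutive roots. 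The normalizations $\Ps_j(0, x^j) = x_j$ and $\Ps_j(t, 0) = 0$ are then automatic, the latter because $y = 0$ is a root of $F_j(t, 0, \cdot)$ by $T \subset V$.

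The delicate point is to prove that $\Ps_j$, defined fiberwise in this manner, varies continuously in the base parameters $(t, x^{j-1})$. Continuity on the complement of the discriminant locus follows from analytic dependence of simple roots, while continuity across the discriminant locus is precisely what the parametric Puiseux theorem guarantees: the compatibility $\tilde\xi_i(t, \theta u) = \tilde\xi_{\sigma(i)}(t, u)$ forces different sheets of the cover to glue consistently, so the interpolation does not tear along the branch locus. Once fiberwise injectivity and joint continuity are established, the triangular form implies that $\Ph_j$ is a homeomorphism with inverse of the same triangular shape, and by construction $F_j(\Ph_j(t, x^j)) = 0 \iff F_j(0, x^j) = 0$. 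After $n - k$ iterations we obtain $\Ph = \Ph_n$; since $F_n$ is the Weierstrass polynomial of $F$, $V(F) = V(F_n)$ in a neighborhood of the origin, and property (iii) follows.
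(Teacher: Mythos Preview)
Your inductive framework matches the paper's strategy, but the inductive step has a genuine gap. You propose to apply the Puiseux with parameter theorem after ``Weierstrass preparation with respect to $x_{j-1}$ and a ramified cover $x_{j-1} = u^N$ aligning the discriminant locus of $\hat F_j$ with a monomial in $u$ times a unit.'' This is exactly what cannot be done for $j \ge k+2$. The hypothesis of Theorem~\ref{thm:PuiseuxTheorem} requires the discriminant of $(\hat F_j)_{red}$ to equal $x_{j-1}^M \cdot \unit(t, x^{j-1})$, but what you actually know is only that it divides $F_{j-1}(0, x^{j-1})$, whose zero set is an arbitrary hypersurface in the $(j-1)$-dimensional variable $x^{j-1} = (x_1,\dots,x_{j-1})$, not a single coordinate hyperplane. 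No change of one variable, and no ramified cover in $x_{j-1}$ alone, will straighten this locus; even the Jung--Abhyankar theorem (Theorem~\ref{thm:Jung}) only handles normal-crossings discriminants. Consequently the roots of $\hat F_j$ are not analytic functions of $(t, x^{j-1})$ after a finite base change, the labelling you invoke does not exist globally, and the interpolation is not defined. The argument survives only at the very first non-trivial stage $j=k+1$, where $F_k\equiv 1$ forces the discriminant to be a unit.

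The paper circumvents this by replacing your analytic labelling with a purely topological one: Lemma~\ref{lem:multiplicityconstant} says that any continuous family of homeomorphisms of the base preserving the discriminant \emph{locus} (not its equation) preserves the multiplicities of the roots upstairs, so the lift of $\Ph_{j-1}$ to $F_j^{-1}(0)$ is well-defined and continuous without any Puiseux-type parameterization. The extension to the ambient slice (your piecewise-affine interpolation, which incidentally presupposes an ordering of roots unavailable over $\C$) is then done by a covering isotopy/triangulation argument, or in the later refinement of \cite{PP17} by the Whitney interpolation formula. Puiseux with parameter does reappear in that refinement, but only after restriction to real analytic arcs $x^{j-1}(s)$, which collapses the base to a single variable $s$ and restores the hypothesis of Theorem~\ref{thm:PuiseuxTheorem}; this is the missing reduction your argument would need.
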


We note that Varchenko's result gives local topological triviality, a property stronger than the topological equisingularity.  Here by topological equisingularity we mean the constancy of local topological types of $V_t: = V\cap (\{t\}\times \K^n)$ at the origin, i.e. the existence of homeomorphism germs $h_t : (V_0,0)\to (V_t,0)$, possibly given by ambient homeomorphisms $H_t : (\K^n,0) \to (\K^n,0)$. 
The (ambient) topological triviality, that is the existence of $\Ph$ of 
\eqref{eq:homeomorphismPh} implies that such $H_t(x) = \Ph (t,x)$ depends continuously on $t$.

The details of the proof of Theorem \ref{thm:topological_triviality}
 are published in  \cite{varchenkoizv72}.  Strictly speaking the proof in \cite{varchenkoizv72} is in the global polynomial case but it can be adapted easily to the local analytic case.  The homeomorphism $\Phi$ is constructed in the complex case $\K=\C$.   The real case follows from the complex one under a standard argument using the invariance by complex conjugation.    
The functions $\Ps_i$ are constructed inductively so that  every  
\begin{align}\label{eq:Phiinduction} 
\Ph_i (t,x_1, \ldots , x_i) = (t, \Ps_1(t, x_1), \ldots , \Ps_{i} (t,x_1, \ldots , x_{i}) )
\end{align}
induces topological triviality of $F_i^{-1}(0)$. Given $\Ph_i$, then 
$\Ph_{i+1}$ is constructed in two steps.\\

\noindent
\emph{Step 1.}  One lifts $\Ph_i$ to the zero set of $F_{i+1}^{-1}(0)$.  
Such a continuous lift exists and is unique thanks to the following lemma, cf. the multiplicity preservation lemmas of Section 2 of \cite{varchenkoizv72}  or Lemma on page 429 of \cite{varchenkoICM75}. This lemma and  the standard argument of the continuity of roots show that such a lift is continuous. 

\begin{lemma}\label{lem:multiplicityconstant}
Let \begin{align}\label{def:pseudopolynomial}
  F (t, x )= x_n^{d}+ \sum_{j=1}^{d} a_{i-1,j} (x^{n-1})
 x_n^{d-j},  
\end{align} be a pseudopolynomial defined in a neighborhood of $p=(p',p_n) \in \C^n$.  
Let $H_t : (\C^{n-1},p')\to 
(\C^{n-1}, p'_t)$, $t\in [0,1]$, be a continuous family of local homeomorphisms preserving the discriminant locus of $F$, that is 
$H_t (\DD_F, p') = (\DD_F, p'_t)$.  
Then the number of distinct roots of $F$ over $p'_t$, as well as their multiplicities, are independent of $t$.
\end{lemma}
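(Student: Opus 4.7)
The plan is to combine the continuous dependence of the roots of a monic polynomial on its coefficients with a stratification of the base by the generalized discriminants of $F$. Since $H_t$ depends continuously on $t$ and sends $p'$ to $p'_t$, each coefficient $a_{n-1,j}(p'_t)$ depends continuously on $t$; by the classical continuity of roots one can then choose continuous functions $\alpha_1(t),\ldots,\alpha_d(t)\in\C$, with possible repetitions, whose unordered multiset equals the multiset of roots of the one-variable polynomial $F(p'_t,\cdot)$.

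The hypothesis $H_t(\DD_F,p')=(\DD_F,p'_t)$ combined with $H_t$ being a local homeomorphism yields at once a clean dichotomy: either $p'\in\DD_F$ and $p'_t\in\DD_F$ for every $t\in[0,1]$, or neither. In the latter case every $F(p'_t,\cdot)$ has $d$ distinct simple roots and the conclusion is immediate; the real work is in the former case. There I would introduce the generalized discriminants $\D_1=\D_F,\D_2,\ldots,\D_{d-1}$ of $F$ (cf.\ Section \ref{sec:discriminants} of the Appendix) together with the nested closed analytic sets $\DD^{(k)}:=\{\D_1=\cdots=\D_k=0\}$, and use the standard fact that $q\in\DD^{(k)}\setminus\DD^{(k+1)}$ precisely when $F_q$ has exactly $d-k$ distinct roots.

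The core of the argument is then an induction on $k$ showing that $H_t$ sends the germ of $\DD^{(k)}$ at $p'$ onto the germ of $\DD^{(k)}$ at $p'_t$; the base case $k=1$ is the hypothesis. For the step, each stratum $\DD^{(k)}\setminus\DD^{(k+1)}$ should be characterised inside the pair germ $(\C^{n-1},\DD_F,p')$ by a local topological invariant---for instance by the monodromy of the branched covering $V(F)\to\C^{n-1}$ around small transverse loops, or by the local dimension and homotopy type of the deeper topologically singular locus of $\DD^{(k)}$---and a homeomorphism of pairs preserves such invariants. Granting this, $r(p'_t):=d-\max\{k:\D_k(p'_t)=0\}$, the number of distinct roots of $F_{p'_t}$, is constant in $t$. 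Continuity of the $\alpha_i(t)$ then forces constancy of the full multiplicity partition: with $r$ constant, two distinct roots cannot merge (that would strictly decrease $r$) and no multiple root can split (that would strictly increase $r$), so the partition of $d$ given by the multiplicities is independent of $t$.

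The main obstacle is precisely this inductive step, where one must upgrade the topological preservation of $\DD_F=\DD^{(1)}$ to topological preservation of the deeper canonical strata. This is not formal and requires recognising the $\DD^{(k)}$ from purely topological data of $(\C^{n-1},\DD_F,p')$. In the application to Theorem \ref{thm:topological_triviality} there is in fact strictly more to exploit, since $H_t$ is inherited from the trivialisation of $V(F_i)$ in the Zariski equisingular system and therefore controls the higher discriminant loci directly, bypassing the general topological-invariance question.
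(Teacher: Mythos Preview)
The paper does not give its own proof of this lemma; it is stated and attributed to Varchenko \cite{varchenkoizv72,varchenkoICM75}. So I am evaluating your argument on its own merits.

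There is a genuine gap at the inductive step. You want to upgrade preservation of $\DD_F=\DD^{(1)}$ to preservation of each $\DD^{(k)}$ by appealing to a topological characterisation of $\DD^{(k)}$ inside the pair $(\C^{n-1},\DD_F)$. No such characterisation exists: the deeper strata depend on the covering $V(F)\to\C^{n-1}$, not only on its branch locus. A concrete obstruction is
\[
F(x_1,x_2,Z)=(Z^3-x_1)(Z-1)(Z-1-x_2),
\]
for which, near the origin, $\DD_F=\{x_1x_2=0\}$. Along $\{x_1=0,\,x_2\ne0\}$ the root partition is $(3,1,1)$, while along $\{x_2=0,\,x_1\ne0\}$ it is $(2,1,1,1)$; thus $\DD^{(2)}$ is only one of the two lines, yet the pair germ $(\C^2,\{x_1x_2=0\})$ is symmetric in the two lines. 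Any invariant of the pair alone will fail to single out $\DD^{(2)}$. Your own hesitation about this step is well founded, and the workaround you propose in the last paragraph (using that in the application $H_t$ already trivialises $V(F_i)$) does not help either: $V(F_i)$ contains $\DD_{F_{i+1}}$ but need not contain the deeper $\DD^{(k)}$ of $F_{i+1}$.

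The idea that is missing is that the \emph{continuity in $t$} already forces the isotopy to lift to the covering. After normalising so that $H_0=\mathrm{id}$, apply the homotopy lifting property to the unramified $d$-sheeted cover $V(F)\to\C^{n-1}$ over $U\setminus\DD_F$: the isotopy $H_t|_{U\setminus\DD_F}$ lifts uniquely to an isotopy $\tilde H_t$ of $V(F)|_{U\setminus\DD_F}$ with $\tilde H_0=\mathrm{id}$. Now, for a sufficiently small ball $B$ about $p'$, the set $V(F)\cap(B\times\C)$ has exactly $r(p')$ connected components, one for each distinct root, the $i$-th being an $m_i$-sheeted branched cover of $B$; since $\DD_F$ has real codimension~$2$, removing $B\cap\DD_F$ does not change the number of components. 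The lifted homeomorphism $\tilde H_t$ carries $V(F)$ over $B\setminus\DD_F$ to $V(F)$ over $H_t(B)\setminus\DD_F$, inducing a bijection of components and preserving the degree of each over the base. Hence both the number of distinct roots and their multiplicities at $p'_t$ coincide with those at $p'$. This is, in essence, Varchenko's argument; your mention of ``monodromy of the branched covering'' is pointing in the right direction, but the decisive point is that the homotopy lifting property manufactures the lift from the continuity of $H_t$ alone, with no extra hypothesis needed.
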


\noindent
\emph{Step 2.} 
As soon as  $\Ph_{i+1}$ is defined on the zero set of $F_{i+1}^{-1}(0)$ it suffices to extend it to the ambient space.  This is obtained in Section 1 of \cite{varchenkoizv72} by the covering isotopy lemma, see also Fundamental Lemma of \cite{varchenko73}.  The construction of such extension is based on a triangulation of the base space, so that the finite branched covering $F_{i+1}^{-1}(0) \to \C^{l+i}$ is trivial over each open simplex, and a simplicial extension argument.


\subsection{Arcwise analytic triviality}\label{ssec:arcwise}
Zariski equisingularity implies much stronger triviality property than just the topological one. 
 The following result was shown in \cite[Theorem 3.1]{PP17}.

\begin{thm}\label{thm:arcwise-analytic_triviality}
Suppose that $V$ is Zariski equisingular with respect to the parameter $t$.  Then there are neighborhoods $U$ of the origin in $\K^l$, $\Omega_0$ of the origin in $\K^n$, and $\Omega$ of the origin in $\K^{l+n}$, and  a 
homeomorphism 
\begin{align}\label{eq:trivialization}
\Ph  : U \times \Omega_0 \to \Omega, 
\end{align}
 such that 
 \begin{enumerate}
\item  [\rm (i)]
$\Ph (t, 0) =  (t,0) $,  $\Ph (0, x_1, \ldots, x_n) =  (0, x_1, \ldots, x_n) $;
\item [\rm (ii)]
$\Ph$ has a triangular form \eqref{eq:Phitriangular} ;
\item [\rm (iii)]
there is $C>0$ such that  for all $(t,x)\in U \times \Omega_0$
\begin{align*}
  C^{-1} |F_n(\Ph (0, x))| \le |F_n(\Ph (t,x )) | \le  C |F_n(\Ph (0,x)) |   ;
\end{align*}
\item [\rm (iv)]
For $(t, x_1, \ldots , x_{i-1})$ fixed, 
$\Ps_i (t, x_1, \ldots , x_{i-1}, \cdot ): \K \to \K$ is bi-Lipschitz  
and the Lipschitz constants of 
$\Ps_i$ and $\Ps_i ^{-1}$ can be chosen independent of $(t, x_1, \ldots , x_{i-1})$;  
\item [\rm (v)]
$\Ph$ is an {\aaa} trivialization of the projection $\Omega \to U$ .  
\end{enumerate}
\end{thm}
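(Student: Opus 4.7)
The plan is to construct $\Phi$ by induction on $i$, building triangular maps $\Phi_i(t, x^i) = (t, \Psi_1(t, x_1), \ldots, \Psi_i(t, x^i))$ that trivialize $V(F_i)$ in the sense of (i)--(v) with $F_n$ replaced by $F_i$. We start from $\Phi_k(t, x^k) = (t, x^k)$, which is admissible because $F_j \equiv 1$ for $j \le k$, and increase $i$ by one until $i = n$; at that stage the full theorem is recovered.

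For the step from $i$ to $i+1$: because the discriminant of $(F_{i+1})_{red}$ divides $F_i$ and $\Phi_i$ preserves $V(F_i)$, the map $\Phi_i$ preserves the discriminant locus of $F_{i+1}$. Theorem \ref{thm:PuiseuxTheorem} (Puiseux with parameter), applied to $F_{i+1}$ after the change of coordinates given by $\Phi_i$, produces analytic parameterizations $\tilde\xi_j(t, \cdot)$ of the roots of $F_{i+1}$ on a branched cover, with pairwise differences of the shape $u^{k_{jl}}\, \unit(t, u)$. Define $\Psi_{i+1}$ on $V(F_{i+1})$ by sending each root branch of $F_{i+1}(0, x^i, \cdot)$ to its continuation $\tilde\xi_j(t, \cdot)$ at parameter $t$. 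Well-definedness and continuity follow from Lemma \ref{lem:multiplicityconstant}. The monomial-times-unit shape of the pairwise differences yields uniform bi-Lipschitz bounds in $x_{i+1}$ between corresponding roots, and arcwise analyticity of $\Psi_{i+1}|_{V(F_{i+1})}$ along any real analytic arc in $U \times \Omega_0$ follows because such an arc, pulled back to the branched cover, feeds into $\tilde\xi_j$ to give a real analytic function of the arc parameter.

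Extension of $\Psi_{i+1}$ off $V(F_{i+1})$ is carried out by a root-adapted interpolation. In the real case one interpolates affinely between consecutive real roots on each vertical $x_{i+1}$-fiber, producing bi-Lipschitz bounds controlled by the ratios of root gaps at $t$ and $0$ (themselves bounded by the monomial structure), and inheriting arcwise analyticity from the analyticity of the roots along arcs. In the complex case one uses an analogous Voronoi-type decomposition of the $x_{i+1}$-plane attached to the root configuration and interpolates radially within each cell; alternatively, since (v) is tested one real analytic arc at a time, the regularity verification reduces to the one-parameter situation already handled. Property (iii) at the final step $i = n$ is then immediate from (iv) and the factorization of $F_n$ into linear factors: each factor $\Psi_n(t, x) - \xi_j(t, \ldots)$ is comparable in modulus to $x_n - \xi_j(0, x^{n-1})$.

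The main obstacle is the \emph{simultaneous} control of (iv) and (v) during the extension step. The interpolation must be defined so that its precomposition with an arbitrary real analytic arc yields a real analytic function, while giving Lipschitz constants uniform over all base points $(t, x^i)$. This is precisely where the arcwise analytic framework of \cite{PP17} is essential: it localizes all regularity questions to one-parameter families, where Theorem \ref{thm:PuiseuxTheorem} supplies analytic root families with explicit monomial differences, and both properties then follow from the explicit form of the parameterization.
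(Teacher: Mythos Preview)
Your inductive framework matches the paper's (and Varchenko's) strategy, and the reduction to one-parameter families via arcs is the right idea. However, there are two genuine gaps where your proposal would not go through as written.

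\textbf{The extension formula.} Piecewise-affine interpolation between consecutive real roots does \emph{not} produce an arc-wise analytic map. Consider a real analytic arc $s\mapsto (x^i(s), x_{i+1}(s))$ whose last component $x_{i+1}(s)$ crosses one of the root branches $\xi_j(\Phi_i(t,x^i(s)))$ at some $s_0$. On either side of $s_0$ your $\Psi_{i+1}$ is given by a different affine formula, and the resulting function of $(t,s)$ has a corner at $s_0$; it is continuous but not analytic there. The same issue arises for a Voronoi-type patching in the complex case: the cell boundaries depend real-analytically on the roots, but the interpolation formula changes across them, destroying analyticity along transverse arcs. The paper does not interpolate piecewise: it uses Whitney's Interpolation Formula (see \cite{whitneymorse65} and \cite[Appendix~I]{PP17}), an explicit \emph{single} real rational expression in $x_{i+1}$ and the full collection of roots, valid everywhere and free of case distinctions. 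That formula is what simultaneously delivers (iv) and (v); any partition-based scheme will fail (v).

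\textbf{Applying Puiseux with parameter.} You write that Theorem~\ref{thm:PuiseuxTheorem} is applied to $F_{i+1}$ ``after the change of coordinates given by $\Phi_i$''. This cannot be done: $\Phi_i$ is only a homeomorphism, arc-wise analytic but not analytic in $x^i$, so $F_{i+1}\circ(\Phi_i,\mathrm{id})$ is not an analytic pseudopolynomial to which the theorem applies. Moreover, even for $F_{i+1}$ itself the discriminant is merely a divisor of $F_i$, not of the form $x_i^M\cdot\unit$ required in Theorem~\ref{thm:PuiseuxTheorem}. The paper's route is: fix a real analytic arc $x^i(s)$; by the inductive arc-wise analytic hypothesis $\Phi_i(t,x^i(s))$ is analytic in $(t,s)$; set $P(t,s,x_{i+1})=F_{i+1}(\Phi_i(t,x^i(s)),x_{i+1})$. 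One must then show that $P$ is a Zariski equisingular family of plane curve singularities in the variable $s$, i.e.\ that its (suitably generalized) discriminant has the form $s^M\cdot\unit(t,s)$. This step, which the paper flags as ``the main point of the proof'', uses stability of the discriminant under base change and has to cope with the fact that $P$ need not be reduced even when $F_{i+1}$ is. Your proposal does not address this step; without it, the ``monomial-times-unit'' structure of the root differences that you rely on for (iv) is not established.
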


(Note that (iii) of Theorem \ref{thm:arcwise-analytic_triviality} implies (iii)
Theorem \ref{thm:topological_triviality}.)  

Let us recall after \cite{PP17} the notion of arc-wise analytic trivialization.  First, we need to recall, after \cite{kurdyka88}, the notion of arc-analytic map.  Let $Y, Z$ be  real analytic spaces.  We say that a map 
$g(z) : Z\to Y$ is \emph{arc-analytic} if for everyreal analytic arc  $z (s) : I\to Z$,  $ g(z(s))$ is analytic in $s$.  Suppose now that 
$T, Y, Z$ are $\K$-analytic spaces, $T$ nonsingular.  We say that a map 
$f (t,z) : T \times Z\to Y$ is \emph{{\aaa } in} $t$ 
if it is $\K$-analytic in $t$ and arc-analytic in $z$, that is for every real analytic arc  $z (s) : I\to Z$,  the map 
$ f(t, z(s))$ is analytic in both $t$ and $s$.  Note that in the complex analytic case it means that $ f(t, z(s))$ can be written as a convergent power series $\sum _{\alpha=(\alpha_1, \ldots, \alpha_ l)} \sum _k  a_{\alpha,k}  t^ \alpha s^ k$ in $t$ complex and $s$ real.  

\begin{rem} 
In the complex analytic case, it is in general impossible to have 
the complex analytic dependence of $\Ph$ on $x$, even only on the complex arcs.  This rigidity property already appears for moduli spaces of elliptic curves.  
\end{rem}

Suppose now that, moreover, $\pi : Y\to T$ is $\K$-analytic.  
 We say  $$\Ph (t,z) : T \times Z\to Y$$
  is an  \emph{{\aaa} trivialization of $\pi$}, see  \cite[Definition 1.2]{PP17}, if it satisfies the following properties 
  \begin{enumerate}
  \item 
  $\Ph$  is a subanalytic homeomorphism (semi-algebraic in the algebraic case),   
   \item 
  $\Ph$ is {\aaa} in $t$ (in particular it is $\K$-analytic with respect to $t$), 
  \item
   $\pi \circ \Ph (t,z)=t$  for every  $(t,z) \in T\times Z$, 
     \item
   the inverse of $\Ph$ is arc-analytic, 
    \item 
  there exist $\K$-analytic stratifications $\{Z_i\}$ of $Z$ and $\{Y_i\}$ of $Y,$ such that for each $i$,   
  $Y_i = \Ph (T\times Z_i)$  and $\Ph_{|T\times Z_i} : T\times Z_i \to Y_i$ 
  is a real analytic diffeomorphism.  
   \end{enumerate}

The proof of Theorem \ref{thm:arcwise-analytic_triviality} follows Varchenko's strategy, \cite{varchenkoizv72}, which we recalled briefly in subsection \ref{ssec:topequising}.  It is technically simpler since in  Step 2 of the proof, the extension of the trivialization to the ambient space, is based on Whitney Interpolation Formula, 
see \cite{whitneymorse65}, \cite[Appendix I]{PP17}.  The homeomorphism 
$\Ph_{i+1}$ is given by a precise algebraic formula (formula (3.5) of 
\cite{PP17}
) in terms of the roots of the pseudopolynomial  $F_{i+1}$ and $\Ph_{i}$.   (This algebraic formula is a real rational map, it involves, in particular, the square of the distance to the roots of $F_{i+1}$.   
There is no such a complex rational formula and no hope, of course, to make $\Ph_{i+1}$ complex arc-analytic, that would have been, by Hartog's Theorem. just complex analytic.)

The fact that thus obtained trivialization $\Ph_{i+1}$ is {\aaa } is 
  proven by induction on $i$.  The inductive step is obtained by
   a reduction to the Puiseux with parameter theorem, Theorem \ref{thm:PuiseuxTheorem}. Let $x^{i}(s)$ be a real analytic arc.  By the inductive assumption $\Ph_{i} (t,x^i(s))$ is analytic in $t,s$.  Therefore $P(t,s,x_{i+1})=F_{i+1}(\Ph_{i} (t,x^{i}(s), x_{i+1}))$  is a pseudopolynomial with respect ot $x_{i+1}$ depending analytically on   $s$ and $t$.  The main point of the proof is to show that $P(t,s,x_{i+1})$ defines a Zariski equisingular family of plane curve singularities parameterized by $t$.  It follows in essence by the 
  stability of the discriminant by a base change, though technically it is more involved, $P$ is not necessarily reduced even if so is $F_{i+1}$, see the proof of 
\cite[Theorem 3.1]{PP17} 
for more details.   

\begin{rem}
Arc-wise analytic triviality is, in part, motivated by the relation of Zariski equisingularity and equiresolution of singularities and the theory of blow-analytic equivalence, see the last paragraphs of subsection \ref{ssec:equiresolution}.  
 \end{rem}


\subsection{Whitney's Fibering Conjecture.}\label{ssec:fiberingconjecture}
In \cite{PP17}, Theorem \ref{thm:arcwise-analytic_triviality} is used to 
show Whitney's fibering conjecture.

Whitney stated this conjecture in the context of 
the regularity conditions (a) and (b) introduced in  
\cite{whitneyannals65}.  These conditions on stratification imply the topological triviality along each stratum.  This trivialization is obtained by the flow  of "controlled" vector fields as follows from proofs of Thom-Mather Isotopy Lemmas.  By stating the Fibering Conjecture, Whitney wanted a stronger version of triviality, namely that the stratified set locally fibers into submanifolds isomorphic to strata.   

\begin{conjecture*}[Whitney's fibering conjecture, \cite{whitneymorse65} section 9, p.230] \label{conjecture}

Any analytic subvariety $V\subset U$ ($U$ open in $\C^n$) 
has a stratification such that each point $p_0\in V$ has a neighborhood 
$U_0$ with a semi-analytic fibration. 
\end{conjecture*}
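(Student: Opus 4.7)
The plan is to deduce Whitney's fibering conjecture from Theorem \ref{thm:arcwise-analytic_triviality}, applied stratum by stratum to a suitable stratification of $V$. The key observation is that the arc-wise analytic homeomorphism $\Phi$ produced in \eqref{eq:trivialization} is exactly what Whitney asked for: its triangular form, together with property (v) of the definition of arc-wise analytic trivialization (compatibility with analytic stratifications of source and target), exhibits a neighborhood of a point $p_0 \in V$ as a union of semi-analytic fibers $\Phi(U \times \{x\})$, each analytically diffeomorphic to the base $U$ of the stratum through $p_0$, and the partition is compatible with $V$.

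The first step is to construct, locally near any $p_0 \in V$, a stratification $\mathcal{S}$ such that $V$ is Zariski equisingular along each stratum $S \ni p_0$. Fix coordinates $(t, x_1, \ldots, x_n)$ so that $S = \{x = 0\}$ and $V \supset S$, with the $x$-coordinates chosen sufficiently generic and linear. Starting from the Weierstrass polynomial $F_n$ associated to a reduced local equation of $V$, define $F_{n-1}, F_{n-2}, \ldots$ recursively as the Weierstrass polynomials associated to the first non-vanishing generalized discriminant of $(F_{i+1})_{red}$. Genericity of the coordinates guarantees regularity in $x_i$ at each step, and the process terminates with some $F_k \equiv 1$, producing a system of pseudopolynomials as in Definition \ref{def:ZAinfamilies}. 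Globally one stratifies $V$ by a Noetherian induction on dimension: the locus where this recursion fails to give Zariski equisingularity is a proper analytic subset, which is stratified further. Applying Theorem \ref{thm:arcwise-analytic_triviality} to each stratum then yields, near every $p_0$, a homeomorphism $\Phi : U \times \Omega_0 \to \Omega$ that trivializes $V$ and is arc-wise analytic in $t \in U \subset S$; the fibers $\Phi(U \times \{x\})$ are the desired semi-analytic leaves.

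The main obstacle is the first step, i.e.\ producing the Zariski equisingular stratification. As indicated by the discussion of Luengo's example in the introduction, a transverse linear projection need not give Zariski equisingularity at higher dimensionality type, so the coordinate system at each inductive step must be chosen with care; however, for the fibering conjecture one does not require Zariski's \emph{canonical} stratification, but only \emph{some} locally finite stratification into smooth strata along which $V$ admits a Zariski equisingular system in \emph{some} (stratum-dependent) coordinates, which is much easier to produce. A secondary point is to check that the fibration delivered by $\Phi$ is semi-analytic in Whitney's sense: this follows from the fact that the trivializations $\Phi_{i+1}$ of the inductive step in the proof of Theorem \ref{thm:arcwise-analytic_triviality} are given by explicit real rational formulas in the roots of $F_{i+1}$ and therefore subanalytic, and from the compatibility of $\Phi$ with the analytic stratifications $\{Z_i\}, \{Y_i\}$ built into property (v) of the arc-wise analytic trivialization. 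Once both ingredients are in place, the family $\{\Phi(U \times \{x\})\}_{x \in \Omega_0}$ is the semi-analytic fibration predicted by Whitney.
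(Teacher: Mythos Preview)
Your approach is essentially the one the paper (summarizing \cite{PP17}) takes: build a stratification along whose strata $V$ is Zariski equisingular, then apply Theorem~\ref{thm:arcwise-analytic_triviality} to obtain an arc-wise analytic trivialization $\Phi$ along each stratum, whose fibers $\Phi(U\times\{x\})$ give the required fibration.

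There is one point you pass over that the paper singles out as essential. Whitney's notion of ``semi-analytic fibration'' is not merely a continuous-in-$x$, analytic-in-$t$ trivialization: Whitney explicitly required that the existence of such a fibration force his regularity conditions (a) and (b) on the associated stratification. The arc-wise analytic trivialization coming from Theorem~\ref{thm:arcwise-analytic_triviality}, applied to an arbitrary Zariski equisingular system, guarantees condition (a) but \emph{not} (b); the Brian\c con--Speder family (Example~\ref{ex:briancon-speder}) is Zariski equisingular for a non-transverse projection yet fails (b). The remedy, as the paper explains, is to insist on \emph{transverse} Zariski equisingularity at every inductive step (the kernel of each projection not contained in the tangent cone of $\{F_i=0\}$). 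Under this hypothesis the trivialization of \cite{PP17} satisfies the additional regularity estimate $C^{-1}\|x\|\le\|\Phi(t,x)-(t,0)\|\le C\|x\|$, which in turn implies Whitney (b), and even Verdier's (w). Your choice of ``sufficiently generic and linear'' coordinates would indeed give transversality, but you should make explicit that transversality (not mere genericity of the discriminant construction) is what is needed, and why: otherwise the stratification you produce need not be a Whitney stratification, and the conjecture as Whitney formulated it would not follow.
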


By a semi-analytic fibration Whitney meant a local trivialization as in 
\eqref{eq:trivialization} that depends analytically on the parameter $t$.
Whitney does not specify the dependence on $x$, besides that he requires it to be continuous and that the existence of such fibration should imply 
Whitney's regularity conditions (a) and (b) (Whitney's semi-analytic fibration should not be confused with the notion of semi-analytic set introduced about the same time by 
{\L}ojasiewicz in \cite{lojasiewiczIHES}). Partial results on Whitney's fibering conjectures were obtained in \cite{hardtsullivan88}, and in the smooth case in \cite{muroloduplesseistrotman18}.

Whitney's fibering conjecture was proven in \cite{PP17} 
in the local complex and real analytic cases and in global algebraic cases  by means of Zariski equisingularity and arc-wise analytic triviality.  
More precisely, by Theorem \ref{thm:arcwise-analytic_triviality},  
 every such set has a stratification that 
locally admits arc-wise analytic trivializations (see the previous subsection) along each stratum. Existence of such trivialization guarantees Whitney's regularity condition (a) but not necessarily condition (b)  
(see Brian\c con-Speder example, Example \ref{ex:briancon-speder} below). 
Here we touch for the first time in this survey an interesting and important feature, some properties of Zariski equisingular families depend on the genericity of the system of coordinates $x_1, \ldots ,x_n$.  In order to guarantee Whitney's condition (b) we consider transverse Zariski equisingularity, see \cite[Definition 4.1]{PP17}.  We call Zariski equisingularity \emph{transverse (or transversal)} if at each inductive stage the kernel of the projection $(t, x^i) \to (t, x^{ i-1})$ is not included in the tangent cone to $F_i = 0$ at the origin. 
If we have a family that is transverse Zariski equisingular then,
 by Theorem 4.3 of \cite{PP17}, the arc-wise analytic trivialization constructed in \cite{PP17} 
satisfies additionally the property, called regularity, 
  \begin{align*}
  C^{-1} \|x\| \le \|\Ph (t,x) - (t,0)\| \le  C \|x\| ,
\end{align*}
for a constant $C$ independent of $t$ and $x$. 
 Geometrically it means that the trivialization $\Ph$ preserves the magnitude of the distance to 
$U\times \{0\}$.  It is proven in 
\cite[Proposition 7.4]{PP17} 
that this regularity implies Whitney's regularity conditions 
\cite[Section 7]{PP17}
) . 
condtion (b) (and even Verdier's condition (w)) along $U\times \{0\}$.

We discuss the relation of Zariski equisingularity, the plain one or with extra conditions such as transversality and genericity, and 
Whitney's conditions in subsection \ref{ssec:relationto}. 

\subsection{Algebraic Case}\label{ssec:algebraic}

In the papers \cite{varchenko73,varchenkoICM75} Varchenko considers the families of analytic singularities while the paper \cite{varchenkoizv72} deals with the families of affine or projective algebraic varieties.  Similarly, the  families of algebraic varieties were  considered in sections 5 and 9 of \cite{PP17}.  The version presented below is stated in \cite{PRpreprint18}.  It follows 
from the proof of the main theorem, Theorem 3.3, of \cite{PP17}, see also  Theorems 3.1  and 4.1 of \cite{varchenkoizv72} in the complex case, Theorems 6.1 and 6.3 \cite{varchenkoizv72} in the real case, and Proposition 5.2 and Theorem 9.2 of \cite{PP17} where the global algebraic case is treated.


\begin{thm}\label{thm:algebraic}
Let $\mathcal V$ be an open connected neighborhood of $\tt$ in $\K^r$ and 
let $\mathcal O_{\mathcal V}$ denote the ring of $\K$-analytic functions on 
$\mathcal V$.  
Let $t=(t_1, \ldots , t_r)$ denote the variables in $\mathcal V$ and 
let $x=(x_1,\ldots, x_n)$ be a set of variables in $\K^n$. Suppose that for $i=k_0,\ldots, n,$ there are given 
\begin{align}\label{eq:system-extension}
F_i(t,x^i)=x_i^{d_i}+\sum_{j=1}^{d_i}a_{i-1,j}(t,x^{i-1})x_i^{d_i-j}\in 
\mathcal O_{\mathcal V}[x^{i}], 
\end{align}
with $\ d_i>0$,    such that

\begin{enumerate}
\item[(i)] for every $i>k_0$, the first non identically equal to zero generalized discriminant of $F_i(t,x^{i-1},x_i)$ 
divides  $F_{i-1} (t,x^{i-1})$.

\item[(ii)] the first non identically equal to  zero generalized discriminant of $F_{k_0}$ is independent of $x$ and does not vanish on $\mathcal V$.
\end{enumerate}
Then,  for every $\q \in\mathcal V$ there is a homeomorphism
$$h_{\q}: \{\tt\}\times\K^n\rightarrow \{\q\}\times\K^n$$ such that
$h_{\q} (V_{\tt})=V_{\q}$, 
where for ${\q} \in \mathcal V$ we denote $V_\q=\{(\q,\x)\in \mathcal V \times \K^n\mid F_n(\q,\x)=0\}$.  

Moreover, if $F_n=G_1\cdots G_s$ then for every $j=1,\ldots, s$  
$$h_{\q}\left(G_j^{-1}(0)\cap(\{\tt\}\times\K^n)\right)=G_j^{-1}(0)
\cap(\{\q \}\times\K^n).$$
\end{thm}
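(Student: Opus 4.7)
The approach is to adapt the inductive construction of Varchenko (Theorem \ref{thm:topological_triviality}), or rather its arc-wise analytic version (Theorem \ref{thm:arcwise-analytic_triviality}), to the present global algebraic setting. The key differences from the local case are: the parameter $t$ ranges over a connected open set $\mathcal{V}$ rather than a neighborhood of a fixed point; the variables $x$ range over all of $\K^n$, not just a neighborhood of the origin; and there is no distinguished "singular stratum" along which to trivialize---instead, one has to connect two arbitrary values $\tt, \q \in \mathcal{V}$.

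First I would work locally around a single point $\q_0 \in \mathcal{V}$ and construct trivializations of the form $\Phi_i(t,x^i) = (t, \Psi_1(t,x_1), \ldots, \Psi_i(t,x^i))$ of the successive pseudopolynomials $F_i$, inductively on $i$ from $k_0$ to $n$. The base case $i = k_0$ uses hypothesis (ii): since the first non-identically-zero generalized discriminant of $F_{k_0}$ depends only on $t$ and is nowhere vanishing on $\mathcal{V}$, the number of distinct roots of $F_{k_0}$ in $x_{k_0}$ and their multiplicities are constant over $(t, x^{k_0-1}) \in \mathcal{V} \times \K^{k_0-1}$. One then starts with $\Phi_{k_0 - 1} = \mathrm{id}$ and lifts to $F_{k_0}^{-1}(0)$ by the continuous selection of roots guaranteed by Lemma \ref{lem:multiplicityconstant}. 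The inductive step from $i$ to $i+1$ uses hypothesis (i): since the first non-zero generalized discriminant of $F_{i+1}$ divides $F_i$, the homeomorphism $\Phi_i$ preserves this discriminant locus, and Lemma \ref{lem:multiplicityconstant} supplies a canonical continuous lift to $F_{i+1}^{-1}(0)$; the lifted homeomorphism is then extended to the ambient space.

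The main obstacle is the \emph{global} extension in $x$: whereas in the local analytic setup of Theorem \ref{thm:arcwise-analytic_triviality} one works on a small neighborhood of the origin, here the homeomorphism must be defined on all of $\{\q\}\times \K^n$. The key point is that each $F_i$ is a polynomial in $x_i$ of fixed degree $d_i$ with coefficients in $\mathcal{O}_{\mathcal V}[x^{i-1}]$, so the branched covering $F_{i+1}^{-1}(0) \to \mathcal{V} \times \K^i$ has globally bounded degree. One may use either Varchenko's covering-isotopy argument applied to a global triangulation of $\K^i$ compatible with the discriminant locus (as in the original \cite{varchenkoizv72}), or the Whitney interpolation formula from \cite[Appendix I]{PP17}, which is intrinsically global. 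Either extension produces a homeomorphism $\Phi_{i+1}$ defined on $W_{\q_0} \times \K^{i+1}$ for some open connected neighborhood $W_{\q_0} \subset \mathcal{V}$ of $\q_0$.

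Finally, to produce $h_{\q}: \{\tt\} \times \K^n \to \{\q\} \times \K^n$ for an arbitrary pair $\tt, \q \in \mathcal{V}$, I join them by a path in the path-connected open set $\mathcal{V}$, cover the image of this compact path by finitely many neighborhoods $W_{\q_1}, \ldots, W_{\q_N}$ carrying local trivializations $\Phi_{\q_j}$, and compose the restrictions of the resulting fiber homeomorphisms; by construction, $h_{\q}(V_\tt) = V_\q$. The factorization statement for $F_n = G_1 \cdots G_s$ is automatic: at the top inductive step, the canonical continuous lift of $\Phi_{n-1}$ to the roots of $F_n$ matches each continuous branch to a branch belonging to the same irreducible factor $G_j$ (equivalently, by Puiseux with parameter applied factorwise), so each $G_j^{-1}(0)$ is preserved by $\Phi_n$, hence by $h_\q$.
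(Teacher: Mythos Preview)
Your proposal is correct and follows essentially the same approach as the paper: the paper does not give an independent proof but states that the theorem follows from the proof of \cite[Theorem~3.3]{PP17} (the arc-wise analytic trivialization, Theorem~\ref{thm:arcwise-analytic_triviality} here) together with Varchenko's global results \cite[Theorems~3.1, 4.1, 6.1, 6.3]{varchenkoizv72} and the global algebraic treatment in \cite[Proposition~5.2, Theorem~9.2]{PP17}. The remark immediately after the theorem confirms your local-trivialization-plus-path-composition strategy (local $\Ph$ on a neighborhood $\mathcal W$ of $\tt$, then connectedness of $\mathcal V$), and the factorization clause is exactly the content of Theorem~\ref{thm:many-components}.
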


\begin{rem}
By construction of \cite{varchenkoizv72, varchenkoizv73,varchenkoICM75} and \cite{PP17}, the homeomorphisms $h_\q$ can be obtained by a local topological trivialization. That is there are a neighborhood $\mathcal W$  of $\tt$ in $\K^r$ and a homeomorphism 
$$
\Ph : \mathcal W \times \K^n \to \mathcal W \times \K^n,  
$$
so that $\Ph (\q,\tt,x)) = h_{\q}(\tt,x)$. 
This $\Ph $ is triangular of the form \eqref{eq:Phitriangular}.   If we 
write $\Ph(\q,x)= (\q, \Ps _\q (x)) $, i.e. as a family of homeomorphisms 
$h_\q= \Ps _\q :\K^n \to \K^n$, then, as follows from \cite{PP17}, 
we may require that:  
\begin{enumerate}
	\item
	The homeomorphism $\Phi$ is subanalytic.  In the algebraic case, 
	i.e. if  we replace in the assumptions  
	$\mathcal O_{\mathcal V}$ by the ring of regular or 
	$\K$-valued Nash functions on 
$\mathcal V$, $\Ph $ can be chosen semialgebraic. 
\item
$\Ph$ is arc-wise analytic. In particular, each $h_\q$  and its inverse 
$h_\q^{-1}$ are arc-analytic.  
\end{enumerate}
\end{rem}

\begin{rem}\label{rk:proj}
If $F_i$ are homogeneous in $x$, then the functions $\Ps_\q$ satisfy, by construction, 
$$\forall \la\in\K^*,\forall x\in \K^n\ \ \ \Psi_\q(\la x)=\la \Psi_\q(x).
$$
Hence if we define $\P (V_\q)=\{(\q,\x)\in \mathcal V \times \P_\K^n\mid F_n(\q,\x)=0\}$, the homeomorphism $h_\q$ induces a homeomorphism between $\P(V_{\tt})$ and $\P(V_{\q})$.
\end{rem}

\subsection{Principle of generic topological equisingularity}\label{ssec:arbitrarycodimension}

Varchenko applies Theorem \ref{thm:topological_triviality} to establish in \cite[Sections 5 and 6]{varchenkoizv72} generic topological equisingularity for families of 
real or complex, affine or projective, algebraic sets.  
The principle of generic topological equisingularity says that 
in an algebraic family $X_t$ of algebraic sets, parameterized by $t\in T$, where $T$ is not necessarily nonsingular, irreducible algebraic variety, 
there is a proper algebraic subset $Y$ of $T$ such that 
 the fibers $X_t$ have constant topological type for $t$ from each  connected component of $T\setminus Y$.   
 In the complex algebraic case $T\setminus Y$ is connected by the irreducibility of $T$, in the real algebraic case it has finitely many connected components.  For analytic spaces or sets, a similar principle holds locally. In both analytic and algebraic cases, the results give actually local topological triviality of the family $X_t$ over $T\setminus Y$.  We give examples of possible precise statements below.  Let us first make some remarks. 

Generic topological equisingularity can be proven, in general, either by Zariski equisingularity or by stratification theory using  Whitney's stratification and Thom-Mather Isotopy Lemmas, see \cite[Theorem 4.2.17]{trotman2020}. Whitney's stratification approach is independent of the choice of coordinates and simple to define.  
But the trivializations obtained by this method are not explicit since they are flows of "controlled" vector fields.  Even if such vector fields can be chosen subanalytic or semialgebraic,  not much can be said about the regularity of their flows. 
Zariski's  equisingularity method is more explicit and in a way constructive.  It uses the actual equations and coordinate systems.  This can be considered either as a drawback or as an advantage.   
The trivializations can be chosen subanalytic (semialgebraic in the 
algebraic case), as shown in \cite{PP17}.  Actually, the trivializations 
are given there by explicit formulas in terms of the coefficients of the polynomials and their roots. 

In the real case, the triangulation provides another method for proving generic topological triviality.  The classical triangulation procedures are based on a similar construction as Zariski equisingularity, i.e. subsequent co-rank 1 projections and their discriminants. For instance, a beautiful result on semialgebraic triviality was shown by Hardt using this approach in \cite{hardt80}. For a fairly complete account on this approach, the reader can consult \cite{BCR} and the references therein. 

It is fairly straightforward to apply Theorem \ref{thm:topological_triviality}  to obtain generic topological equisingularity  for families of hypersurfaces.  
In the case of varieties and spaces of arbitrary codimension the argument 
goes as follows.  
If $F= G_1 \cdots G_k$, then, under the assumptions of Lemma \ref{lem:multiplicityconstant},  for $x^{n-1}$ fixed, the 
 number of roots of each  $G_{j} (\Ph_{n-1} (t,x^{n-1}), x_n)=0$ is independent of $t$, see Lemma 2.2 of  \cite{varchenkoizv72} or  Proposition 3.6 of \cite{PP17}.  In particular $\Ph $ 
 trivializes not only $V(F)=F ^{-1}(0)$ but also each of 
 $V(G_j)= G_j ^{-1}(0)$.   
Thus  \cite{varchenkoizv72}  implies the following.

 \begin{thm}\label{thm:many-components} 
  If $F= G_1 \cdots G_k$ then for each $j=1, \ldots , k$, 
  the homeomorphisms $\Ph$ of Theorem \ref{thm:topological_triviality} satisfies $\Ph(T\times V(G_j )\cap \Omega_0))=V(G_j) \cap \Omega$, 
 where 
$V_t(G_j) = (G_j^ {-1} (0) \cap  (\{t\}\times \K^n)$. In particular $\Ph$ trivializes $\{G_1= \cdots = G_k=0\}$.  
\end{thm}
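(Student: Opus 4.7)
The plan is to combine the inductive construction of $\Phi$ from Theorem \ref{thm:topological_triviality} with the refined root-counting strengthening of Lemma \ref{lem:multiplicityconstant} cited immediately before the statement (Lemma 2.2 of \cite{varchenkoizv72} / Proposition 3.6 of \cite{PP17}), which asserts that for $F = G_1 \cdots G_k$ the number of roots of each factor $G_j(\Phi_{n-1}(t, x^{n-1}), x_n)$ is independent of $t$. Recall that $\Phi = \Phi_n$ is assembled triangularly, the last stage $\Psi_n$ being the unique continuous lift of $\Phi_{n-1}$ through the roots of the pseudopolynomial $F_n$. Concretely, for each fixed $(t, x^{n-1})$, $\Psi_n(t, x^{n-1}, \cdot)$ matches the roots of $F(0, x^{n-1}, \cdot)$ with those of $F(t, \Phi_{n-1}(t, x^{n-1}), \cdot)$ by continuation in $t$.

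Fix an index $j$ and let $\xi(0)$ be a root of $G_j(0, x^{n-1}, \cdot)$, so that $(0, x^{n-1}, \xi(0)) \in V(G_j)$. Let $\xi(t)$ denote the continuous tracking of $\xi(0)$ prescribed by the construction, so $\Phi(t, x^{n-1}, \xi(0)) = (t, \Phi_{n-1}(t, x^{n-1}), \xi(t))$. I would then argue that $\xi(t)$ remains a root of $G_j$ for every $t$: the set $A_j := \{t : G_j(t, \Phi_{n-1}(t, x^{n-1}), \xi(t)) = 0\}$ is closed by continuity and contains $0$; if $A_j$ were proper, then on its complement $\xi(t)$ would still be a root of $F$ but of some other factor $G_{j'}$ with $j' \neq j$, forcing the root count of $G_j$ to drop strictly (and that of $G_{j'}$ to rise), contradicting the cited invariance. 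Hence $\Phi$ carries the $t=0$ slice of $V(G_j)$ into $V(G_j)$; the reverse inclusion is obtained symmetrically, either by running the same argument for $\Phi^{-1}$ or from the bijectivity of the root-matching produced by the construction.

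Having established that $\Phi$ trivializes each $V(G_j)$ individually, the trivialization of the common zero locus follows by intersecting:
\[
\Phi\bigl(T \times \bigcap_j (G_j^{-1}(0) \cap \Omega_0)\bigr) = \bigcap_j \Phi\bigl(T \times (G_j^{-1}(0) \cap \Omega_0)\bigr) = \bigcap_j (V(G_j) \cap \Omega),
\]
which is exactly $\{G_1 = \cdots = G_k = 0\} \cap \Omega$. The principal technical input is encapsulated in the cited refinement of Lemma \ref{lem:multiplicityconstant}, whose content ultimately rests on the observation that the zero locus of $\D_F$ contains both the individual discriminant loci $\DD_{G_j}$ and the pairwise resultant loci $\{R(G_j, G_{j'}) = 0\}$, so a continuous deformation preserving $\D_F^{-1}(0)$ cannot force roots of distinct factors to exchange outside $\D_F^{-1}(0)$ itself. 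Once this root-bookkeeping step is in hand, the passage from preservation of counts to preservation of membership in each $V(G_j)$ is a clean continuity-and-closure argument as above.
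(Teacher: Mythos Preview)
Your approach is the paper's: both rest on the refinement of Lemma~\ref{lem:multiplicityconstant} (Lemma~2.2 of \cite{varchenkoizv72}, Proposition~3.6 of \cite{PP17}) saying that the number of roots of each factor $G_j$ over $\Phi_{n-1}(t,x^{n-1})$ is independent of $t$; the paper simply records the theorem as a consequence of that citation, and you fill in the passage from ``counts preserved'' to ``each tracked root stays in its factor.''

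One wrinkle in your write-up: the counting step as stated does not quite close. Constancy of the root count of $G_j$ does not by itself rule out one tracked root leaving $G_j$ while another enters it, so ``$\xi(t)\notin V(G_j)$ forces the count to drop'' is not justified. The correct mechanism is exactly your final remark: since $\D_F$ contains the pairwise resultant loci $\{R(G_j,G_{j'})=0\}$ and $\Phi_{n-1}$ preserves $\D_F^{-1}(0)$, roots of distinct factors never meet off the discriminant, so the set $\{t:\xi(t)\in V(G_j)\}$ is both closed and open (the latter because off $\D_F^{-1}(0)$ the root of $G_j$ near $\xi(t_0)$ is unique and must equal $\xi(t)$). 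Promote that observation from an afterthought to the actual argument and drop the count-based contradiction; then your proof is complete and matches the paper's intent.
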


Now let us give two possible exact statements for this principle taken  from \cite{PP17}.  
Note that they give not only generic topological equisingularity but 
much stronger generic arc-wise analytic triviality.  

\begin{thm}[{\cite[Theorem 9.3]{PP17}}, cf. {\cite[Theorem 5.2, Theorem 6.4]{varchenkoizv72}}]\label{thm:theoremequisingularity3}
Let $T$ be an algebraic variety (over $\K$)  and let $\mathcal X =\{X_k\}$ be  a finite family of 
algebraic subsets $T \times \proj^{n-1}_\K$.  Then there exists 
an algebraic 
stratification  $\mathcal S$ of $T$ such that for every stratum $S$ and for 
every $t_0\in S$  there is a neighborhood $U$ of $t_0$ in $S$ and a semialgebraic {\aaa } 
 trivialization  of $\pi,$ preserving each set of the family $\mathcal X,$
\begin{align}\label{eq:raagtrivial}
\Ph  : U \times \proj^{n-1}_\K \to \pi^{-1} (U),\end{align}
$\Ph(t,x)= (t,\Ps(t,x))$, $\Ph(t_0,x)= (t_0,x)$,  
where $\pi:T \times \proj^{n-1}_\K\to T$  denotes the projection.   
\end{thm}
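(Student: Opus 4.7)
The plan is to reduce the statement to a direct application of Theorem~\ref{thm:algebraic} combined with the projective descent given by Remark~\ref{rk:proj}. First, I would replace the finite family $\mathcal{X}$ by a single hypersurface. For each $X_k$, fix a finite collection of $x$-homogeneous generators $G_{k,\ell}(t,x)$ of a defining ideal and set
$$
F(t,x) = \prod_{k,\ell} G_{k,\ell}(t,x),
$$
which is again homogeneous in $x=(x_1,\ldots,x_n)$. By the ``Moreover'' clause of Theorem~\ref{thm:algebraic}, a trivialization preserving $\{F=0\}$ automatically preserves the zero set of every factor, and hence preserves the projectivization of each $X_k$.

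Second, I would construct the algebraic stratification $\mathcal{S}$ via iterated generalized discriminants. After a generic linear change of coordinates in $x$, we may assume $F$ is regular in $x_n$ over a dense open subset of $T$; there, let $F_n$ be the Weierstrass polynomial of $F$ with respect to $x_n$, and inductively define $F_{i-1}$ to be the Weierstrass polynomial of the first non-identically-zero generalized discriminant of $F_i$ with respect to $x_i$, stopping at the smallest index $k_0$ for which that first non-vanishing generalized discriminant depends only on $t$ and does not vanish. Because these Weierstrass preparations and generalized discriminants are algebraic operations, the loci on which the procedure fails (either by identical vanishing of every generalized discriminant or by failure of the regularity required for Weierstrass preparation) are algebraic subsets of $T$. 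Noetherian induction on $T$, applied recursively to these bad loci with possibly different generic linear coordinate changes on each stage, then produces in finitely many steps an algebraic stratification $\mathcal{S}$ on each stratum of which the resulting pseudopolynomials satisfy conditions (i) and (ii) of Theorem~\ref{thm:algebraic}.

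Third, given a stratum $S$, a point $t_0 \in S$ and a small enough neighborhood $U \subset S$ of $t_0$, Theorem~\ref{thm:algebraic} supplies a semialgebraic arc-wise analytic, triangular trivialization of the affine family $V(F) \subset U \times \K^n$ over $U$ that fixes $t_0$ and preserves the zero set of each factor $G_{k,\ell}$. Since all the pseudopolynomials $F_i$ are $x$-homogeneous by construction, Remark~\ref{rk:proj} applies: the trivialization commutes with the scaling $x \mapsto \lambda x$ for $\lambda \in \K^*$. It therefore descends to a semialgebraic arc-wise analytic trivialization $\Phi : U \times \proj^{n-1}_\K \to \pi^{-1}(U)$ of the projective bundle, which by the first step preserves every $X_k$.

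The step I expect to be the main obstacle is the second one: maintaining algebraicity of the stratification while the generic linear coordinate changes required for Weierstrass preparation may vary from stratum to stratum. The key point is that the parameter space of linear changes of $x$-coordinates is itself an algebraic variety, and the locus on which a given change fails to produce regularity or to make a generalized discriminant non-identically-vanishing is an algebraic subvariety of $T$. Refining $\mathcal{S}$ by these subvarieties at each stage of the induction, and invoking the Noetherian property of $T$, guarantees that the procedure terminates with finitely many algebraic strata on each of which the hypotheses of Theorem~\ref{thm:algebraic} can be arranged to hold.
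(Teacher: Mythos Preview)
The paper does not supply its own proof of this theorem; it is quoted from \cite[Theorem~9.3]{PP17} as an illustration of the generic equisingularity principle. Your outline is consistent with the machinery the survey assembles around it---Theorem~\ref{thm:algebraic} for the affine trivialization, Remark~\ref{rk:proj} for the projective descent, Theorem~\ref{thm:many-components} for preserving the individual $X_k$, and the global polynomial discriminant construction of Section~\ref{ssec:deform-polyn} for producing the system $(F_i)$---so in spirit you are following exactly the route the paper points to.

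Two points deserve more care. First, the phrase ``Weierstrass polynomial'' is out of place in this global homogeneous setting: what one actually does (compare Section~\ref{ssec:deform-polyn}) is choose a generic linear change of the $x$-variables so that the coefficient of the top power $x_i^{d_i}$ in the relevant generalized discriminant is a nonzero function of $t$ alone, and then divide by it; no local preparation enters. Second, the applicability of Remark~\ref{rk:proj} rests on every $F_i$ remaining \emph{homogeneous} in $x^i$. This is true---if $F_i$ is homogeneous of degree $d_i$ then its coefficients $a_{i-1,j}$ are homogeneous of degree $j$ in $x^{i-1}$, and each generalized discriminant is isobaric in the $a_{i-1,j}$ with weights $j$, hence homogeneous in $x^{i-1}$---but you assert it without argument, and it is the linchpin of the projective descent. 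You should also remark that when $T$ is not affine one first restricts to an affine open containing $t_0$ to obtain global $x$-homogeneous generators $G_{k,\ell}$; this is harmless for the local conclusion but should be said.
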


\begin{thm}[{\cite[Theorem 6.2]{PP17}}]\label{thm:genericequisingularity}
Let $T$ be a $\K$-analytic space, $U\subset \K^n$ an open neighborhood of the origin, 
$\pi : T\times U \to T$ the standard projection,  and let 
$\mathcal X=\{X_k\}$ be a finite family  of $\K$-analytic subsets of $T\times U$. 
Let $t_0\in T$.   
Then there exist  an open  neighborhood $T' $ of $t_0$ in $T$ and a proper $\K$-analytic subset $Z\subset T'$, containing $\Sing T'$,  such that for every $t\in T'\setminus Z$,  $\mathcal X$ 
is  regularly {\aab }  equisingular along $T\times \{0\}$ at $t$.  

Moreover, there is an analytic stratification of an open neighborhood of $t_0$ in $T$ such that for every stratum $S$ and every $t\in S$,  $\mathcal X$ 
is  regularly {\aaa }  equisingular along $S\times \{0\}$ at $t$.
\end{thm}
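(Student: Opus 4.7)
The plan is to combine a parametric construction of a Zariski equisingular tower with Theorem~\ref{thm:arcwise-analytic_triviality}. Working locally, I first choose an analytic chart that identifies a neighborhood $T'$ of $t_0$ in $T$ with an analytic subspace of some $\K^r$, and replace the finite family $\mathcal X = \{X_k\}$ by a single reduced $\K$-analytic function $F = \prod_k F_k$, where $X_k = V(F_k)$. The individual factors $F_k$ are carried through the construction so that the final trivialization preserves each of them, as in Theorem~\ref{thm:many-components}.

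Next I build the pseudopolynomial tower $F_n, F_{n-1}, \ldots, F_{k_0}$ of Definition~\ref{def:ZAinfamilies} by iterated Weierstrass preparation in generically chosen linear coordinates on $U$. After a generic linear change of $x_1, \ldots, x_n$, the function $F(t,x)$ is regular in $x_n$ at the origin for $t = t_0$, so Weierstrass preparation yields a pseudopolynomial $F_n(t, x^n)$. Its first non identically zero generalized discriminant is an analytic function of $(t, x^{n-1})$; a further generic linear change in $x^{n-1}$ makes it regular in $x_{n-1}$, and Weierstrass preparation produces $F_{n-1}$, of which that discriminant is a multiple. Iterating down to a level $k_0$ where the first non identically zero generalized discriminant no longer depends on $x$ gives the required tower.

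The main obstacle is controlling the locus in $T$ on which these Weierstrass regularities may degenerate. Each such degeneration is the vanishing of an analytic function of $t$ alone (the relevant Taylor coefficient in the pertinent variable of some discriminant), so, after shrinking $T'$ to a neighborhood on which the coordinate choice made at $t_0$ remains generically valid, the union of these degeneracy loci together with $\Sing T'$ forms a proper $\K$-analytic subset $Z \subset T'$. On $T' \setminus Z$ the constructed tower satisfies all conditions of Definition~\ref{def:ZAinfamilies}, hence $V(F_n)$ is Zariski equisingular with respect to $t$, and Theorem~\ref{thm:arcwise-analytic_triviality} supplies an arc-wise analytic trivialization near each such $t$. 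Choosing the successive linear changes transversally, as in subsection~\ref{ssec:fiberingconjecture}, makes the tower transverse Zariski equisingular and therefore yields the regularity bound $C^{-1}\|x\| \le \|\Ph(t,x) - (t,0)\| \le C\|x\|$, i.e.\ the \emph{regular} arc-wise analytic equisingularity along $T \times \{0\}$ at every $t \in T' \setminus Z$.

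For the stratification statement, I apply the first part iteratively. Take $T' \setminus Z$ (or its connected components, together with a locally finite analytic refinement) as the top-dimensional stratum; then apply the first part to the analytic space $Z$ itself, equipped with the induced family $\{X_k \cap (Z \times U)\}$, obtaining a further proper analytic subset $Z_1 \subsetneq Z$ whose complement in $Z$ is the next layer of strata, with regular arc-wise analytic equisingularity along $Z \times \{0\}$. Since $\dim Z_1 < \dim Z$, noetherianity guarantees termination after finitely many steps and produces the desired analytic stratification of a neighborhood of $t_0$ in $T$.
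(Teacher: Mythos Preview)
The paper does not supply its own proof of this theorem: it is stated as a citation of \cite[Theorem 6.2]{PP17} with no argument given in the survey itself. Your proposal is therefore not being compared against an existing proof here, but it does follow the natural strategy implicit in the surrounding material (the constructions of subsections~\ref{ssec:arcwise}, \ref{ssec:fiberingconjecture}, and especially~\ref{ssec:deform-analytic}), namely: build a transverse Zariski-equisingular tower by iterated Weierstrass preparation in generic linear coordinates, collect the degeneracy loci into a proper analytic $Z\subset T'$, invoke Theorem~\ref{thm:arcwise-analytic_triviality} (with transversality for the regularity), and then stratify $Z$ by noetherian descent. That outline is correct.

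One point deserves more care. You embed a neighborhood $T'$ of $t_0$ as an analytic subspace of $\K^r$, but the functions $F_k$ are only defined on $T\times U$, and your Weierstrass preparations and discriminants need analytic functions on an open set of $\K^r\times U$. You must therefore (locally) extend the $F_k$ to the ambient $\K^r\times U$ and, crucially, include the defining equations of $T'\subset\K^r$ among the factors of $F$, so that the resulting trivialization of $\K^r\times U$ preserves $T'\times U$ and restricts to a trivialization along $T\times\{0\}$. Without this, your tower lives over $\K^r$, not over $T$, and the conclusion ``equisingular along $T\times\{0\}$'' does not follow. This is routine (and is exactly how the arbitrary-codimension case is handled in subsection~\ref{ssec:arbitrarycodimension} via Theorem~\ref{thm:many-components}), but it should be stated explicitly. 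A second minor point: the ``degeneracy loci'' are not just where a single Taylor coefficient vanishes, but where the index $l_i$ of the first nonvanishing generalized discriminant jumps; you should phrase this as the vanishing on $T'$ of the appropriate (finitely many) analytic functions produced in the construction of~\ref{ssec:deform-analytic}.
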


In the above theorem by saying that $\mathcal X$ 
is \emph{{\aab }  equisingular along $T\times \{0\}$ at $t\in \Reg T$} we mean that there are neighborhoods  $B $ 
of $t$ in $\Reg T$ and $\Omega$ of  $(t,0)$ in $T\times \K^n$, and an {\aaa } trivialization 
$\Ph : B \times \Omega_t \to \Omega$, where $\Omega_t = \Omega\cap \pi^{-1} (t)$, such that 
$\Ph (B\times \{0\} ) =  B\times \{0\} $ and for every $k$, $\Ph (T\times X_{k,t}) = X_k, $ 
where $X_{k,t}= X_k \cap \pi^{-1} (t)$.  We say that  $\mathcal X$ 
is  \emph{regularly {\aab }  equisingular along 
$T\times \{0\}$ at $t\in T$} if, moreover, $\Ph$ preserves, up to a constant, the distance to 
$T\times \{0\}$, as we explained at the end of section 
\ref{ssec:fiberingconjecture}. The latter property is related to Whitney's conditions, 
see Section 7 of \cite{PP17}.  


\subsection{Zariski's theorem on the fundamental group.} \label{ssec:fundamentalgroup}
Varchenko in \cite{varchenkoizv72} applies topological triviality of Zariski equisingular projective algebraic varieties to prove Zariski's theorem on the fundamental group  of the complement.  This theorem says that the fundamental group of the complement $\proj^n_\C \setminus V_{n-1}$ of a complex projective hypersurface $V_{n-1}$, $n>2$, coincides with the corresponding group obtained from a general hyperplane section. 

This theorem was announced by Zariski in \cite{zariski37},
but the proof published in it is not considered as complete.  
Another complete proof of this theorem, different from the one of  Varchenko, is given in \cite{hammle71,hammle73}. 


 \subsection{General position theorem.} \label{ssec:generalposition}
In \cite{mccroryetal19} Zariski equisingular families of affine or projective algebraic varieties are used, together with Whitney interpolation, to prove stratified general position and transversality theorems for semialgebraic subsets of algebraic stratifications.

In classical algebraic topology, general position of chains was used by Lefschetz to define the intersection pairing on the homology of a manifold.
This approach is based on a possibility of moving a "subvariety" $Z$ of a 
$C^\infty$ manifold $M$, by a family of diffeomorphisms, so that the image $Z$ becomes transverse to a given another "subvariety" $W$ of $M$.  This principle was made precise by Trotman 
\cite{trotman79} 
and, independently, by Goresky 
\cite{goresky81}.  They proved that by a diffeomorphism one can put in  a stratified general position two Whitney stratified closed subsets $Z$ and $W$ of $M$.  

The main theorem of \cite{mccroryetal19} is  expressed 
in terms of a submersive family of diffeomorphisms introduced in 
\cite[I.1.3.5]{StratifiedMorseTheory}. 
Let $T$ and $M$ be $C^\infty$ manifolds  and let $\Ps : T\times M  \to M$ be a $C^\infty$ map. 
Consider  $\Ps_t : M \to M$, $\Ps_t(x) = \Ps (t, x)$, and $\Ps ^x : T \to  M$, $\Ps ^x(t) = \Ps (t, x)$.   We say  $\Ps$ is \emph{a family of diffeomorphisms} if for all $t\in T$ the map $\Ps_t$ is a diffeomorphism. The family $\Ps $ is called \emph{submersive} if, for each $(t, x) \in T\times M$, the differential $D\Ps ^x$ is surjective. 
By Theorem \mbox{
\cite[I.1.3.6]{StratifiedMorseTheory}}
, if $\Ps : T\times M  \to M$ is submersive and both $Z$ and $W$ are Whitney stratified closed subsets 
of $M$ then the set of $t\in T$ such that $\Ps_t(Z)$ is transverse to $W$ is dense in $T$ and open provided $Z$ is compact.  
 A good example of a submersive family is a transitive action  $\Ps : G\times M  \to M$ of a Lie group.  Note that in this case Theorem 
\cite[I.1.3.6]{StratifiedMorseTheory} 
gives characteristic $0$ part of Kleiman's transversality of a general translate theorem 
\cite{kleiman74}. For a stratified set $X= \bigsqcup S_i$ we say that $\Ps  : T \times X \to X$ is \emph{a stratified submersive family of diffeomorphisms} 
if for each stratum $S_j$,  we have $\Ps(T\times S_j)\subset S_j$, and the map $\Ps : T \times S_j  \to S_j$ is a submersive family of diffeomorphisms. 

 In algebraic geometry the intersection of cycles can be defined via a moving lemma that allows to move the cycle of nonsingular varieties, 
 see \cite[Section 11.4]{F:IT}. 
 But there is no moving lemma nor algebraic general position theorem for singular varieties. In the original construction of Intersection Cohomology \cite{goreskymacphersonIH} 
in order to define the intersection pairing on singular complex algebraic varieties equipped with a Whitney stratification Goresky and MacPherson used a piecewise linear general position theorem of McCrory 
\cite{mccrory78}.   The main theorem of \cite{mccroryetal19} 
shows the existence of such stratified submersive family in the arc-wise analytic category of \cite{PP17}, see also subsection \ref{ssec:arcwise}.

\begin{thm}[{\cite[Theorem 1.1]{mccroryetal19}}]\label{thm:projective}
Let $\mathcal V = \{V_i\}$ be a finite family of algebraic subsets of projective space $\proj_\K^n$. There exists an algebraic stratification $\mathcal S = \{S_j\}$ of $\proj_\K^n$ compatible with each $V_i$ and a semialgebraic stratified submersive family of diffeomorphisms  $\Ps : U\times \proj_\K^n  \to \proj_\K^n$,  where $U$ is an open neighborhood of the origin in $\K^{n+1}$, 
such that $\Ps(0,x)=x$ for all $x\in \proj_\K^n$.  Moreover, the map $\Ph:U\times \proj_\K^ n  \to U\times \proj_\K^n$, $\Ph(t,x) = (t,\Ps(t,x))$, is an arc-wise analytic trivialization of the projection $U\times \proj_\K^n\to U$.
\end{thm}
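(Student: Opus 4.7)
The plan is to reduce Theorem~\ref{thm:projective} to Theorem~\ref{thm:theoremequisingularity3} (in its projective form, cf.\ Remark~\ref{rk:proj}) by introducing an auxiliary $(n+1)$-parameter algebraic family of projective automorphisms of $\proj^n_\K$ that moves points submersively, and then correcting it, via the Zariski equisingular trivialization it produces, into a family that preserves each $V_i$ stratum-wise. First, I would choose $n+1$ generic elements $X_0,\dots,X_n\in\mathfrak{sl}_{n+1}(\K)$ whose induced polynomial vector fields on $\proj^n_\K$ span $T_x\proj^n_\K$ at every point; this is possible because $n+1>n$ and the evaluation $\mathfrak{sl}_{n+1}(\K)\to T_x\proj^n_\K$ is already surjective from the full $(n^2+2n)$-dimensional Lie algebra, so a generic $(n+1)$-dimensional subspace works. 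Setting $\phi_t=\exp(t_0 X_0+\cdots+t_n X_n)$ produces an algebraic $(n+1)$-parameter family of projective self-diffeomorphisms with $\phi_0=\mathrm{id}$ and $d_0\phi^x:\K^{n+1}\to T_x\proj^n_\K$ surjective at every $x$.

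Next, I form the algebraic family $\mathcal X$ of subsets of $U\times\proj^n_\K$ whose members are the total spaces $X_i=\{(t,x):\phi_t^{-1}(x)\in V_i\}$, enlarged by all iterated singular loci and pairwise intersections so that the induced stratification on the fiber over $0$ refines a stratification $\mathcal S$ of $\proj^n_\K$ compatible with each $V_i$. Applying Theorem~\ref{thm:theoremequisingularity3} to $\mathcal X$ and restricting $U$ to lie in the top-dimensional stratum of the base --- which can be ensured by a sufficiently generic choice of $X_0,\dots,X_n$ relative to $\{V_i\}$ so that $0$ belongs to that stratum --- yields a semialgebraic arc-wise analytic trivialization $(t,x)\mapsto(t,\Theta(t,x))$ of $U\times\proj^n_\K\to U$ with $\Theta(0,\cdot)=\mathrm{id}$ and $\Theta(t,V_i)=\phi_t(V_i)$. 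Setting $\Psi(t,x)=\phi_t^{-1}(\Theta(t,x))$ and $\Phi(t,x)=(t,\Psi(t,x))$, one has $\Psi(0,\cdot)=\mathrm{id}$, $\Psi(t,V_i)=V_i$, and $\Psi_t$ preserves every stratum of $\mathcal S$; semialgebraicity, arc-wise analyticity, and real analyticity on each stratum are inherited from $\Theta$ together with the algebraic dependence of $\phi_t$ on $t$.

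The main obstacle is verification of stratified submersivity of $\Psi$. The chain rule gives, for $x\in S_j$, $d\Psi^x|_0(\dot t)=-d\phi^x|_0(\dot t)+d\Theta^x|_0(\dot t)$, and since $\Theta$ preserves the moving strata $\phi_t(S_j)$ one can decompose $d\Theta^x|_0(\dot t)=d\phi^x|_0(\dot t)+u(\dot t)$ with $u(\dot t)\in T_x S_j$, so that $d\Psi^x|_0=u$ and submersivity reduces to the surjectivity of $u:\K^{n+1}\to T_x S_j$. On the open stratum $\proj^n_\K\setminus\bigcup_i V_i$ this is transparent because $\Theta$ is essentially trivial there and $\Psi\approx\phi_t^{-1}$ has surjective derivative at $0$ by the generic choice of the $X_i$. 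On deeper strata the Whitney-interpolation construction of $\Theta$ underlying Theorem~\ref{thm:arcwise-analytic_triviality} (see subsection~\ref{ssec:arcwise}) might a priori yield $u\equiv 0$; the strategy to circumvent this is to analyze the explicit rational formula for $\Theta$ from \cite{PP17} in terms of the roots of the pseudopolynomials of Definition~\ref{def:system} and combine it with a transversality argument, ensuring --- by a further generic choice of the $X_i$, or by first constructing the analogous family over a higher-dimensional auxiliary parameter space and then slicing by a generic $(n+1)$-dimensional subspace --- that $u$ is surjective onto $T_x S_j$ at every point in every stratum.
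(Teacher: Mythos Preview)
Your approach diverges from the paper's, and the gap you flag at the end---submersivity on the positive-codimension strata---is real and, I believe, not repairable along the lines you propose.

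The paper does not introduce an auxiliary family of projective automorphisms at all. It applies the Zariski-equisingular/Whitney-interpolation machinery of \cite{PP17} to the \emph{trivial} family $U\times\proj_\K^n$ and obtains a nontrivial $\Psi$ by inserting the parameters $t\in\K^{n+1}$ directly into the Whitney interpolation formula used in Step~2 of the construction (the extension of $\Phi_i$ from the zero set of $F_{i+1}$ to the ambient slice; cf.\ subsection~\ref{ssec:topequising}). That formula has built-in degrees of freedom; the coordinates of $t$ become precisely those perturbation parameters, and submersivity is then read off from the explicit algebraic form of the perturbed interpolation. This is what the paper means by calling the proof ``rather tricky'': the parameters are woven into the construction of $\Psi$ itself, not into a family that a black-box $\Theta$ is subsequently asked to trivialize.

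In your scheme, by contrast, once $\phi_t$ is fixed the trivialization $\Theta$ is handed to you by Theorem~\ref{thm:theoremequisingularity3} as a single determined object with no residual freedom, and the tangential component $u=d\Psi^x|_0$ along a stratum $S_j$ is whatever that construction happens to produce. Nothing forces it to be surjective. Your claim that $\Theta$ is ``essentially trivial'' on the open stratum is also unjustified: the arc-wise analytic trivialization is built inductively through corank-one projections and interpolation, and it moves points of the open stratum in a way dictated by the lower discriminant loci, not by the identity; in principle one could have $\Theta_t$ agreeing with $\phi_t$ on large regions, yielding $\Psi_t=\mathrm{id}$ there and killing submersivity. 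Neither a ``further generic choice of the $X_i$'' nor ``slicing a higher-dimensional parameter space'' supplies a mechanism to control $u$, because $\Theta$ depends on the $X_i$ through the entire inductive construction in a manner not amenable to the finite-jet transversality argument you sketch. The paper sidesteps all of this by perturbing the interpolation formula itself rather than its inputs.
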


A similar result holds for affine varieties; see \cite[Corollary 3.2]{mccroryetal19}.

The proof of Theorem \ref{thm:projective} is rather tricky.  
It uses the formulas used in \mbox{
\cite{PP17} }
in  Step 2  of the construction topological trivialization of Zariski equisingular families, see  section \ref{ssec:topequising}.  These formulas are based on Whitney interpolation and can be perturbed by introducing complex parameters, 
these are $t\in U$ of the theorem.  The whole construction is applied to a  trivial family, that is to the product $U\times \proj_\K^n$, thus producing a non-trivial arc-wise analytic trivialization of a trivial family.

Theorem \ref{thm:projective} implies the general position in terms of the expected dimension of the intersection and the general transversality. 
The general position in terms of dimension  is exactly what is needed to define the intersection pairing for the intersection homology, 
cf.  \cite{goreskymacphersonIH}. 
The general position in terms of dimension can be expressed as follows, the dimension means the real dimension since we consider semialgebraic sets.

\begin{cor} [{\cite[Proposition 1.3]{mccroryetal19}}]\label{cor:generalposition1}
Let $\Ps : U\times \proj_\K^ n  \to \proj^n$ be a stratified family as in Theorem \ref{thm:projective}, and let $\mathcal S$ be the associated algebraic stratification of $\proj_\K^n$.  
 Let $Z$ and $W$ be semialgebraic subsets of $\proj_\K^n$. There is an open dense semialgebraic subset $U'$ of $U$ such that, for all $t \in U'$ and all strata $S\in \mathcal S$,
 $$
\dim   (Z \cap \Ps_t^{-1} (W )\cap S) \le  \dim (Z\cap S)  + \dim (W\cap S)  - \dim S. 
$$
\end{cor}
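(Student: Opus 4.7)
The plan is to combine the strong submersivity built into Theorem \ref{thm:projective} with a stratumwise semialgebraic fiber-dimension count. Fix a stratum $S \in \mathcal S$. Since $\Ps$ preserves $\mathcal S$, the intersection $Z \cap \Ps_t^{-1}(W) \cap S$ equals $(Z\cap S) \cap \Ps_t^{-1}(W\cap S)$, so it is enough to work inside $S$. By a standard semialgebraic stratification (see \cite{BCR}), I would first decompose $Z\cap S = \bigsqcup_\alpha Z_\alpha$ and $W\cap S = \bigsqcup_\beta W_\beta$ into finitely many semialgebraic $C^\infty$ submanifolds of $S$, chosen so that $\dim(Z\cap S) = \max_\alpha \dim Z_\alpha$ and $\dim(W\cap S) = \max_\beta \dim W_\beta$.

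For each pair $(\alpha,\beta)$ consider the evaluation map $e_\alpha : U \times Z_\alpha \to S$, $(t,x) \mapsto \Ps_t(x)$. The submersive hypothesis in Theorem \ref{thm:projective} says precisely that $D\Ps^x : T_t U \to T_{\Ps_t(x)} S$ is surjective at every $(t,x) \in U \times S$; since the surjectivity already holds in the $U$-direction alone, $e_\alpha$ is a submersion \emph{regardless of} how $Z_\alpha$ sits inside $S$. Consequently $E_{\alpha,\beta} := e_\alpha^{-1}(W_\beta)$ is a semialgebraic $C^\infty$ submanifold of $U \times Z_\alpha$ of codimension $\dim S - \dim W_\beta$, whence
$$
\dim E_{\alpha,\beta} \;=\; \dim U + \dim Z_\alpha + \dim W_\beta - \dim S.
$$

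Now project $E_{\alpha,\beta}$ to $U$ and invoke the semialgebraic generic fiber-dimension theorem (see \cite{BCR}): setting $d_{\alpha,\beta} := \dim Z_\alpha + \dim W_\beta - \dim S$, the locus $B_{\alpha,\beta} \subset U$ of parameters for which the fiber exceeds $d_{\alpha,\beta}$ is semialgebraic of dimension strictly less than $\dim U$, so $U'_{\alpha,\beta} := U \setminus \overline{B_{\alpha,\beta}}$ is open, dense and semialgebraic (when $d_{\alpha,\beta} < 0$ the same argument shows that the fiber is generically empty). Taking the finite intersection $U' := \bigcap_{S,\alpha,\beta} U'_{\alpha,\beta}$, one gets for every $t \in U'$
$$
\dim\bigl(Z \cap \Ps_t^{-1}(W) \cap S\bigr) \;=\; \max_{\alpha,\beta}\dim\bigl(Z_\alpha \cap \Ps_t^{-1}(W_\beta)\bigr) \;\le\; \dim(Z\cap S) + \dim(W\cap S) - \dim S.
$$
The only delicate point, and the sole place where Theorem \ref{thm:projective} actually enters, is the submersion step: without surjectivity of $D\Ps^x$ in the parameter direction alone, $e_\alpha$ would not in general be a submersion and one would be forced to fall back on a parametric Sard-type argument that loses control over individual strata of $Z$ and $W$. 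With the strong submersivity in hand, the remainder is a routine exercise in semialgebraic dimension theory.
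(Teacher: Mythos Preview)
The paper does not give its own proof of this corollary; it is simply quoted from \cite{mccroryetal19}. So there is nothing in the paper to compare your argument against directly.

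That said, your argument is correct and is essentially the standard one. It follows the same pattern as the proof of \cite[Theorem I.1.3.6]{StratifiedMorseTheory}, which the paper explicitly cites just before stating the corollary: use that $D\Ps^x$ is already surjective onto the tangent space of the stratum, so the evaluation map restricted to $U\times Z_\alpha$ is a submersion for any smooth piece $Z_\alpha\subset S$, pull back $W_\beta$, and count dimensions. Your adaptation to the semialgebraic category (stratifying $Z\cap S$ and $W\cap S$ into smooth semialgebraic pieces, then invoking the semialgebraic fiber-dimension theorem from \cite{BCR}) is exactly what is needed to get a semialgebraic open dense $U'$ rather than merely a residual set. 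The reduction $Z\cap\Ps_t^{-1}(W)\cap S=(Z\cap S)\cap\Ps_t^{-1}(W\cap S)$ is justified because $\Ps_t$ restricts to a diffeomorphism of each stratum.

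One small point worth making explicit: you need $\Ps|_{U\times S}$ to be at least $C^1$ for the submersion argument. This is guaranteed because the paper's definition of a stratified submersive family of diffeomorphisms requires the restriction to each stratum to be a $C^\infty$ map, and the arc-wise analytic trivialization property in Theorem~\ref{thm:projective} in fact gives real analyticity on each stratum.
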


If $\mathcal S$ is a stratification of a semialgebraic set $X$, and $\mathcal T$ is a stratification of a semialgebraic subset $Y$ of $X$, 
then $(Y,\mathcal T)$ is a \emph{substratified object} of $(X,\mathcal S)$ if each stratum of $\mathcal T$ is contained in a  stratum of $\mathcal S$. Two substratified objects $(Z,\mathcal A)$ and $(W,\mathcal B)$ of $(X,\mathcal S)$ are   \emph{transverse} in $(X,\mathcal S)$ if, for every pair of strata $A\in\mathcal A$ and $B\in\mathcal B$ such that $A$ and $B$ are contained in the same stratum $S\in\mathcal S$, the manifolds $A$ and $B$ are transverse in $S$.

\begin{cor}[{\cite[Proposition 1.5]{mccroryetal19}}]\label{cor:transversality}
Let $\Ps : U\times \proj^ n  \to \proj^n$ be a stratified family as in Theorem \ref{thm:projective}, and let $\mathcal S$ be the associated algebraic stratification of $\proj^n$.  
 Let $Z$ and $W$ be semialgebraic subsets of $\proj^n$, with  semialgebraic stratifications $\mathcal A$ of $Z$ and $\mathcal B$ of $W$ such that $(Z,\mathcal A)$ and $(W,\mathcal B)$ are substratified objects of $(\proj^n,\mathcal S)$. There is an open dense semialgebraic subset $U'$ of $U$ such that, for all $t \in U'$, $(Z,\mathcal A)$ is transverse to $\Ps_t^{-1} (W,\mathcal B)$ in $(\proj^n,\mathcal S)$.  
 \end{cor}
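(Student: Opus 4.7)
The plan is to deduce Corollary \ref{cor:transversality} from Theorem \ref{thm:projective} by applying a stratified submersive transversality argument (a semialgebraic version of \cite[I.1.3.6]{StratifiedMorseTheory}) pair by pair and then intersecting over the finitely many pairs of strata.

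First, fix a stratum $S\in\mathcal S$ and a pair of strata $A\in\mathcal A$, $B\in\mathcal B$ with $A\subset S$ and $B\subset S$ (such inclusions are the only ones relevant for transversality of the substratified objects, by definition). Since $\Ps$ is a stratified submersive family of diffeomorphisms, the restriction $\Ps:U\times S\to S$ is itself a submersive family of diffeomorphisms. Consider the evaluation map
\begin{equation*}
\mathrm{ev}_{A}:U\times A\longrightarrow S,\qquad (t,x)\longmapsto \Ps_t(x).
\end{equation*}
The submersivity assumption means exactly that $D\Ps^x$ is surjective at every $(t,x)$, so $\mathrm{ev}_A$ is a submersion. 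Therefore $\mathrm{ev}_A^{-1}(B)$ is a semialgebraic submanifold of $U\times A$ of codimension $\mathrm{codim}_S B$.

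Next, apply the semialgebraic Sard theorem to the projection $\pi_A^B:\mathrm{ev}_A^{-1}(B)\to U$. Its set of critical values is a semialgebraic subset of $U$ of positive codimension, hence its complement $U_{A,B}$ is open, dense, and semialgebraic in $U$. A standard computation shows that $t\in U_{A,B}$ is a regular value of $\pi_A^B$ if and only if $\Ps_t(A)$ is transverse to $B$ inside $S$, equivalently $A$ is transverse to $\Ps_t^{-1}(B)$ in $S$ (here one uses that $\Ps_t$ is a diffeomorphism carrying $S$ to itself).

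Finally, since the stratifications $\mathcal S$, $\mathcal A$, $\mathcal B$ are all semialgebraic and hence finite, the set
\begin{equation*}
U'\;=\;\bigcap_{S\in\mathcal S}\;\bigcap_{\substack{A\in\mathcal A,\,A\subset S\\ B\in\mathcal B,\,B\subset S}} U_{A,B}
\end{equation*}
is a finite intersection of open dense semialgebraic subsets of $U$, hence itself open, dense and semialgebraic. For any $t\in U'$ and any pair $(A,B)$ lying in a common stratum $S$, the submanifolds $A$ and $\Ps_t^{-1}(B)$ are transverse inside $S$, which is exactly the definition of $(Z,\mathcal A)$ being transverse to $\Ps_t^{-1}(W,\mathcal B)$ in $(\proj^n,\mathcal S)$.

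The only delicate point — and the one that justifies invoking Theorem \ref{thm:projective} rather than merely a Whitney stratification — is the submersivity of $\Ps$ on each stratum, since it is precisely this property that makes the evaluation map a submersion and forces the Sard-type genericity to produce an open dense semialgebraic set of good parameters; everything else is routine semialgebraic transversality and a finite intersection.
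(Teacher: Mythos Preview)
Your argument is correct and is precisely the parametric transversality argument indicated by the paper (which does not give a proof in the survey itself but points to \cite[I.1.3.6]{StratifiedMorseTheory} as the mechanism, adapted to the semialgebraic category). The key ingredients---submersivity of the evaluation map on each stratum from the stratified submersive family of Theorem~\ref{thm:projective}, semialgebraic Sard for the projection $\pi_A^B$, and the finite intersection over pairs of strata---are exactly what is needed, and nothing is missing.
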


In a recent paper \cite{MPintersectionhomology18} Corollary \ref{cor:transversality} is used to define an intersection pairing for \emph{real intersection homology}, an analog of intersection homology for real algebraic varieties.


\section{Construction of equisingular deformations}\label{sec:construction}

Let $f$ be either a polynomial or the germ of an analytic function, and let $V=V(f)$ denote the zero set of $f$.  We explain below how to construct Zariski equisingular deformations of $V$ (or more precisely of its equation $f$).  The idea comes from \cite{mostowski84}, where the local complex analytic case was considered.  
We begin with the global polynomial case as considered in \cite{PRpreprint18} since it is conceptually simpler and does not require Artin approximation.  Note also that this method can be applied as well to construct equisingular deformations of sets given by a system of several  equations 
as was explained at the end of section \ref{ssec:arbitrarycodimension}.

\subsection{Global polynomial case}\label{ssec:deform-polyn}

Given an algebraic subset $V$ of $\K^n$ and let the polynomials $g_1$,\ldots, $g_s\in\K[x]$ generate the ideal defining $V$.  
Let 
$$g_i=\sum_{\a\in\N^n}g_{i,\a}x^\a.$$ 
In general, a deformation of the $g_{i,\a}$, even arbitrarily 
small, destroys the topological structure of $V$ due to the presence of singularities (and "singularities at infinity" in the global case).  
In this method we construct a finite number of constraints 
satisfied by the coefficients $g_{i,\a}$, these are the equations \eqref{eq:2}, \eqref{eq:3}, 
\eqref{eq:5}, \eqref{eq:rational-relations} and the inequations \eqref{eq:cond-c_i} and \eqref{eq:4} below, that satisfy the following property.  
Any deformation $t\mapsto g_{i,\a}(t)$ with $g_{i,\a}(0)=g_{i,\a}$,  that satisfies the same constraints \eqref{eq:cond-c_i} and \eqref{eq:2}-\eqref{eq:rational-relations} is, by construction, Zariski equisingular. In particular any such deformation is topologically trivial.   
Moreover, the entries of \eqref{eq:cond-c_i} and \eqref{eq:2}-\eqref{eq:rational-relations} are rational functions in $g_{i,\a}$ with rational coefficients, that is they belong to $\Q(u_{i,\a})$, for some  new indeterminates $u_{i,\a}$. 

Let us fix a finite set of coefficients $g_{i,\a}\in \K$  that contains all 
nonzero of them.  In what follows we will perturb only these coefficients and keep all the other equal to zero. 

 After a linear change with rational coefficients of coordinates $x$ we can assume that
\begin{align}\label{eq:g-r-polynomials}
g_i=c_i x_n^{p_i}+\sum_{j=1}^{p_i}b_{n-1,r,j}(x^{n-1})x_n^{p_i-j} =\sum_{\b\in\N^n}a_{n,i,\b}x^\b ,\quad  \forall i=1,\ldots, s,
\end{align}
with 
\begin{align}\label{eq:cond-c_i}
c_i\ne 0, \qquad i = 1, \ldots, s.
\end{align} 
By multiplying each $g_i$ by $1/c_i$ we can assume that $c_i=1$ for every $i$. Denote by $f=f_n$ the product of the $g_i$ and by $a_{n}$ the vector of coefficients $a_{n,i,\b}$. The entries of $a_n$ are rational functions in the 
original $g_{i,\a}$ (i.e. before the linear change of coordinates $x$) with rational coefficients, say
\begin{equation}\label{eq:lin_ch} 
a_n=A_n(g_{i,\a}),
\end{equation}
where $A_n=(A_{n,i,\b})_{i,\b}\in \Q(u_{i,\a})^{N_n}$ for some integer $N_n>0$.  
Let the integer $l_n$ be defined by 
$$\D_{n,l_n}(a_{n})\not\equiv 0 \text { and } \D_{n,l}(a_n)\equiv 0, \quad \forall l<l_n, $$
where $\D_{n,l}$ denotes the $l$-th generalized discriminant of $f_n$, 
see Appendix. 
After a new linear change of coordinates $x^{n-1}$ with rational coefficients,  $\D_{n,l_n}(a_{n})=e_{n-1}f_{n-1}$
with $e_{n-1}\neq 0$ and 
$$f_{n-1}=\sum_{\b\in\N^n}a_{n-1,\b}x^\b=x_{n-1}^{d_{n-1}}+\sum_{j=1}^{d_{n-1}}b_{n-2,j}(x^{n-2})x_{n-1}^{d_{n-1}-j}$$
for  some  constants $a_{n-1,\b}$ and polynomials $b_{n-2,j}$. 
We repeat this construction and define recursively a sequence of polynomials $f_j(x^j)$, 
monic in $x_j$, such that
\begin{align}\label{eq:2}
\D_{j+1,l_{j+1}}(a_{j+1})=e_j\left(x_{j}^{d_{j}}+\sum_{k=1}^{d_{j}}b_{j-1,k}(x^{j-1})x_{j}^{d_{j}-k}\right)=e_j\left(\sum_{\b\in\N^n} a_{j,\b}x^\b\right)=e_jf_j 
\end{align}
is the first non identically equal to zero generalized discriminant  of $f_{j+1}$ and $a_j$ denotes the vector of coordinates $a_{j,\b}$.
This way we get a system of equations  
\begin{align}\label{eq:3}
\D_{j+1,l}(a_{j+1})\equiv 0\ \ \forall l<l_{j+1},\end{align}
and inequations
\begin{align}\label{eq:4}
\qquad e_j\ne 0 , \end{align}
for $j=n, n-1, \ldots, k_0$, until we get 
\begin{align}\label {eq:5}
f_{k_0}=1   \text { for some }
k_0\geq 0.
\end{align}

By  \eqref{eq:cond-c_i},  \eqref{eq:lin_ch} and \eqref{eq:2} the entries of the $c_i$, $a_k$ and $e_j$ are rational functions in the $g_{i,\a}$ with rational coefficients, let us say
\begin{align}\label{eq:rational-relations}
c_i= C_i(g_{i,\a}), \ \  a_k=A_k(g_{i,\a}),\ \ e_j=E_j(g_{i,\a}),
\end{align}
for some $C_i\in\Q(u_{i,\a})$, $A_k\in\Q(u_{i,\a})^{N_k}$ and $E_j\in\Q(u_{i,\a})$.  
Thus \eqref{eq:cond-c_i} and \eqref{eq:2}-\eqref{eq:rational-relations} are equations and inequations, with rational coefficients, on the original coefficients $g_{i,\a}$. 

Let $\mathcal V$ be an open connected neighborhood of a point $\tt \in \K^l$ and let $\mathcal O_{\mathcal V}$ denote the ring of $\K$-analytic functions on 
$\mathcal V$. 
Suppose that $g_{i,\a}(t)\in \mathcal O_{\mathcal V}$, where $t\in \mathcal V$, satisfy $g_{i,\a}= g_{i,\a}(\tt)$. 
For $t\in\mathcal V$ and $ i=1,\ldots, s,$ we define 
\begin{align*}
\tilde g_i(t,x):=\sum_{\a\in\N^n}g_{i,\a}(t) x^\a .
\end{align*}
We claim that if the $g_{i,\a}(t)$  satisfy the identities and the inequations 
\eqref{eq:cond-c_i} and \eqref{eq:2}-\eqref{eq:rational-relations},   then the family $t\to \{\tilde g_1(t,x)= \cdots = \tilde g_s(t,x) = 0\}$ is topologically trivial for $t$ in a small neighborhood of $\tt$ in 
$\mathcal V$.  For this we construct a system $F_j (t, x^j)$ satisfying the assumptions of Theorem \ref{thm:algebraic}.  We set 
\begin{align*}
F_n(t,x)=\prod_{i=1}^s G_i(t,x), \text{ where }  G_i(t,x)=\sum_{\b\in\N^n}A_{n,i,\b}(g_{i,\a}(t)) x^\b,  i=1,\ldots, s,
\end{align*}
and $A_n=(A_{n,i,\b})_{i,\b}\in \Q(u_{i,\a})^{N_n}$ given in 
\eqref{eq:lin_ch}.   Similarly for $j=k_0,\ldots, n-1$ we set 
\begin{align}\label{eq:functions-F}
F_j(t,x^j)=\sum_{\b\in\N^j}A_{j,\b}(g_{i,\a}(t))x^\b.
\end{align}
Note that $G_i(\tt,x)$ concide with $g_i$ after the linear change of coordinates made during the construction.  
It is clear from the above construction that the family $(F_j(t,x^j))$ satisfies the assumptions of Theorem \ref{thm:algebraic}.  Let us summarize it in the following.


\begin{thm}\label{thm:deformation-polynomial}
Suppose that $g_{i,\a}(t)$  satisfy the identities and the inequations \eqref{eq:cond-c_i} and \eqref{eq:2}-\eqref{eq:rational-relations}.  Then  $F_n (t, x)$ defines a Zariski equisingular family with respect to the parameter $t$.  
\end{thm}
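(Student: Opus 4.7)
The plan is to verify that the family $(F_j)_{k_0 \le j \le n}$ constructed in \eqref{eq:functions-F} satisfies the three conditions of Definition \ref{def:ZAinfamilies}, thereby establishing Zariski equisingularity of $V(F_n)$ with respect to the parameter $t$. The key observation enabling this is that every step of the construction in subsection \ref{ssec:deform-polyn} is carried out through rational operations on the coefficients: linear changes of coordinates with rational coefficients, formation of products, extraction of generalized discriminants, and division by the leading coefficients $e_j$. Consequently, each coefficient of each $F_j$ is a prescribed rational function of the original $g_{i,\alpha}$ with coefficients in $\Q$, and the relations \eqref{eq:2}--\eqref{eq:5} become identities in the function field $\Q(u_{i,\alpha})$.

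First, I would verify the monic structure. Each $F_j(t, x^j)$ inherits from $f_j$ the property of being monic in $x_j$ of degree $d_j$, since its leading coefficient is the constant rational function $1$. Hence $F_n = \prod_i G_i$ is monic in $x_n$ and thus coincides with its associated Weierstrass polynomial, giving condition (1). Condition (3), $F_{k_0} \equiv 1$, holds by substituting $g_{i,\alpha}(t)$ into \eqref{eq:5}.

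Next I would verify condition (2). Since generalized discriminants are universal polynomials in the coefficients of a monic polynomial, the identities \eqref{eq:3} and \eqref{eq:2} survive the substitution $g_{i,\alpha} \mapsto g_{i,\alpha}(t)$, provided the denominators appearing in the rational expressions \eqref{eq:rational-relations} do not vanish on $\mathcal{V}$. The latter is guaranteed by the hypothesized inequations \eqref{eq:cond-c_i} and \eqref{eq:4}, which make $c_i(t)$ and $e_j(t)$ into analytic units on $\mathcal{V}$, after possibly shrinking $\mathcal{V}$ around $\tt$. Thus for each $j$ with $k_0 \le j \le n-1$,
\[
\D_{j+1, l}(F_{j+1}) \equiv 0 \ \text{ for } l < l_{j+1}, \qquad \D_{j+1, l_{j+1}}(F_{j+1}) = E_j(g_{i,\alpha}(t))\, F_j(t, x^j),
\]
so $F_j$ divides the first not identically zero generalized discriminant of $F_{j+1}$.

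The point that needs care, rather than being a deep obstacle, is justifying the lift from algebraic identities over $\K$ to analytic identities over $\mathcal{V}$. This is handled precisely by running the entire construction symbolically inside $\Q(u_{i,\alpha})$ and only at the end substituting $u_{i,\alpha} \mapsto g_{i,\alpha}(t)$. Since the hypothesis guarantees that the analytic functions $g_{i,\alpha}(t)$ satisfy the same system of identities and inequations as the constants $g_{i,\alpha}$, the substitution preserves the defining properties of a Zariski equisingular system, completing the verification.
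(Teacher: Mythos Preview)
Your proposal is correct and matches the paper's approach: the paper itself gives no separate proof, stating just before the theorem that ``It is clear from the above construction that the family $(F_j(t,x^j))$ satisfies the assumptions of Theorem~\ref{thm:algebraic},'' and then presents the theorem as a summary. Your write-up simply spells out this verification in detail. One minor point: you frame the check against Definition~\ref{def:ZAinfamilies}, whereas the paper points to the hypotheses of Theorem~\ref{thm:algebraic} (the global algebraic version, which does not require the coefficients to vanish along $T=U\times\{0\}$); the conditions are parallel, so this does not affect the substance of your argument.
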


 \subsection{Application: Algebraic sets are homeomorphic to algebraic sets defined over algebraic number fields}\label{ssec:homeotonumberfields}

The following result was proven in \cite{PRpreprint18}.  

\begin{thm}\label{thm:homeotoalgebraicglobal}
Let $V\subset \K^n$ (resp. $V\subset \P_\K^n$) be an affine (resp. projective) algebraic set, where $\K=\R$ or $\C$. Then there exist an affine (resp. projective) algebraic set $W\subset \K^n$ (resp. $W\subset \P_\K^n$) and a homeomorphism $h:\K^n\lgw \K^n$ (resp. $h:\P_\K^n\lgw \P_\K^n$) such that:
\begin{enumerate}
\item[(i)] the homeomorphism $h$ maps $V$ onto $W$,
\item[(ii)] $W$ is defined by polynomial equations with coefficients in 
$\ovl\Q\cap \K$,
\item[(iii)] the variety $W$ is obtained from $V$ by a Zariski equisingular deformation. In particular the homeomorphism $h$ can be chosen semialgebraic and arc-analytic.
\end{enumerate}
\end{thm}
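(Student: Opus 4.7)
The plan is to produce a $\K$-analytic deformation of the coefficients of the equations defining $V$ that stays within the Zariski equisingularity locus built in Section~\ref{ssec:deform-polyn} and ends at a coefficient vector with entries in $\overline{\Q}\cap\K$; Theorem~\ref{thm:deformation-polynomial} will then guarantee that this family is Zariski equisingular, and Theorem~\ref{thm:algebraic} will deliver the semialgebraic, arc-analytic homeomorphism.

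First I write $V=V(g_1,\ldots,g_s)$ with $g_i\in\K[x]$, perform a rational linear change of coordinates to put the $g_i$ in the form \eqref{eq:g-r-polynomials}, and carry out the whole construction of Section~\ref{ssec:deform-polyn}. This produces a finite list of polynomial identities and inequations \eqref{eq:cond-c_i}, \eqref{eq:2}, \eqref{eq:3}, \eqref{eq:rational-relations}, \eqref{eq:5} on the coefficient vector $p=(g_{i,\a})$, all of whose defining polynomials have coefficients in $\Q$. Let $X\subset\K^N$ denote the semialgebraic subset cut out by these conditions; by design $X$ is defined over $\Q$ and $p\in X$.

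The crux is to construct a $\K$-analytic arc $\gamma:(-\varepsilon,\varepsilon)\to X$ with $\gamma(0)=p$ and $\gamma(t_0)\in(\overline{\Q}\cap\K)^N$ for some $t_0>0$. Let $Y\subset\K^N$ be the Zariski closure of $X$, which is $\Q$-algebraic, and let $Y'\subset Y$ be a $\Q$-irreducible component of $Y$ through $p$. A $\Q$-defined Noether normalization realises $Y'$ locally near $p$ as a finite branched cover of a Euclidean ball $B\subset\K^d$. Choose a $\K$-analytic arc $\delta$ in $B$ starting at the image of $p$ and passing through a point of $B$ with coordinates in $\overline{\Q}\cap\K$ (such points are dense in $B$), and lift it to the branch of $Y'$ through $p$, using a Puiseux parameterization if $p$ happens to be a ramification point. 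The lift $\gamma$ is $\K$-analytic; its selected endpoint lies in $(\overline{\Q}\cap\K)^N$ because it solves a system of $\Q$-polynomial equations over a $\overline{\Q}$-point of $B$; and $\gamma$ stays in $X$ for $t$ near $0$ because $X$ differs from $Y$ only by the open inequations \eqref{eq:cond-c_i} and \eqref{eq:4}, which are satisfied at $p$. Pick $t_0>0$ with $\gamma(t_0)\in X\cap(\overline{\Q}\cap\K)^N$.

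Setting $g_{i,\a}(t)$ equal to the corresponding coordinates of $\gamma(t)$, all identities and inequations in \eqref{eq:cond-c_i} and \eqref{eq:2}--\eqref{eq:rational-relations} hold throughout $(-\varepsilon,\varepsilon)$. Theorem~\ref{thm:deformation-polynomial} then says that $F_n(t,x)$ is a Zariski equisingular family, and Theorem~\ref{thm:algebraic} together with the remark following it supplies a semialgebraic, arc-analytic homeomorphism $h:=h_{t_0}:\K^n\to\K^n$ carrying $V=V_0$ onto $W:=V_{t_0}$; applying the ``moreover'' clause to the factorization $F_n=G_1\cdots G_s$, $h$ sends the intersection $\bigcap_j V(G_j(0,\cdot))=V$ onto $\bigcap_j V(G_j(t_0,\cdot))=W$, so (i) and (iii) of the statement are immediate, while (ii) holds because the coefficients of the polynomials defining $W$ are the components of $\gamma(t_0)$. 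For the projective case one carries out the construction with the $g_i$ and each $F_j$ homogeneous in $x$; Remark~\ref{rk:proj} then implies that the trivialization commutes with scalar multiplication and descends to a homeomorphism $\P_\K^n\to\P_\K^n$ with all the required properties. The main technical obstacle is the middle step: since $p$ may well be a singular point of $X$, a naive linear deformation toward a nearby $\overline{\Q}$-point will almost never remain in $X$; choosing the correct $\Q$-irreducible subvariety through $p$ and using Noether normalization plus Puiseux lifting is what keeps the deformation inside $X$ while landing at a $\overline{\Q}$-point.
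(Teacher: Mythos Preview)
Your approach follows the paper's: set up the $\Q$-polynomial constraints of Section~\ref{ssec:deform-polyn}, deform the coefficient vector $p=(g_{i,\alpha})$ inside the constraint locus to a nearby $(\overline\Q\cap\K)$-point, and invoke Theorems~\ref{thm:deformation-polynomial} and~\ref{thm:algebraic}. The paper realizes the deformation algebraically---choose a transcendence basis $\tt_1,\ldots,\tt_r$ of $\kk=\Q(g_{i,\alpha})$, write each $g_{i,\alpha}$ as a rational function of $\tt$, and specialize $\tt$ to a nearby point of $(\overline\Q\cap\K)^r$---and only spells out the purely transcendental case $\kk=\Q(\tt)$, sending the reader to \cite{PRpreprint18} for the general one. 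Your Noether-normalization-plus-lift is the geometric reformulation of the same idea (the normalization map is the projection to the transcendence basis) and does cover the general case.

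Two points worth tightening. First, if you take $Y'$ to be the $\Q$-Zariski closure of the single point $p$ rather than a component of $\overline X$, then $p$ is automatically a \emph{smooth} point of $Y'$: its coordinates generate the function field of $Y'$, so $p$ lies in no proper $\Q$-subvariety and in particular not in $\Sing Y'$. The lift of $\delta$ is then by the implicit function theorem and the Puiseux step is unnecessary. Second, for the homeomorphism in (iii) to be \emph{semialgebraic}, the remark following Theorem~\ref{thm:algebraic} requires the deformation $t\mapsto g_{i,\alpha}(t)$ to be regular or Nash, not merely $\K$-analytic; so your arc $\gamma$ must be taken algebraic---which it will be once you use the \'etale local section at the smooth point $p$ over an algebraic arc $\delta$ in the base.
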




Suppose, as in the previous section, that  the ideal defining $V$ is generated by the polynomials $g_1$,\ldots, $g_s\in\K[x]$.  
In order to prove Theorem \ref{thm:homeotoalgebraicglobal} one constructs 
in \cite{PRpreprint18} 
a deformation $t\mapsto g_{i,\a}(t)$ of the coefficients $g_{i,\a}\in \K$ of the $g_i$ that 
preserves all polynomial relations over $\Q$ satisfied by these coefficients.  Therefore this deformation preserves the identities 
\eqref{eq:2}, \eqref{eq:3}, \eqref{eq:5} and \eqref{eq:rational-relations}. If it is sufficiently small the inequations \eqref{eq:cond-c_i}, \eqref{eq:4} are also preserved and, by Theorem \ref{thm:deformation-polynomial}, the deformation is equisingular in the sense of Zariski. 

This construction is particularly simple if the field extension $\kk$ of $\Q$ generated by the coefficients $g_{i,\a}$ is a purely transcendental extension of $\Q$.  For the general case we refer the reader to \cite{PRpreprint18}.  
Thus assume that $\kk=\Q(\tt_1,\ldots, \tt_r)$, where the $\tt_i\in\K$ are algebraically independent over $\Q$.  Then there are rational functions  
$g_{i,\a} (t) \in \Q(t)$, 
$t=(t_1,\ldots, t_r)$, such that  $g_{i,\a} =g_{i,\a} (\tt )$.  Let $\mathcal V$ be a neighborhood of $\tt= (\tt_1,\ldots, \tt_r)$ that does not contain the poles of the $g_{i,\a}(t) $.  Since  
$\tt_i\in\K$ are algebraically independent any polynomial relation with coefficients 
in $\Q$, satisfied by $g_{i,\a} =g_{i,\a} (\tt )$, is also satisfied by $
g_{i,\a} (t)$.  
In particular, $g_{i,\a} (t)$ satisfy the identities 
\eqref{eq:2}, \eqref{eq:3}, \eqref{eq:5} and \eqref{eq:rational-relations}
 as we wanted. 
Choose $\q\in (\ovl\Q)^r \cap \mathcal V$ sufficiently close to $\tt$. 
Then all $g_{i,\a} (\q ) \in \ovl\Q$. Therefore the family $(F_j)$, 
defined by \eqref{eq:functions-F},  satisfies the hypothesis of Theorem \ref{thm:algebraic} and the hypersurfaces $X_0:=\{F_n(\q,x)=0\}$ and $X_1:=\{F_n(\tt,x)=0\}$ are homeomorphic. 
Moreover, thus constructed homeomorphism maps every  component of $X_0$ defined by $G_i(\q ,x)=0$ onto the component of $X_1$ defined by $G_i(\tt ,x)=0$, as in Theorem \ref{thm:many-components}. This proves that the algebraic variety $V=\{g_1=\cdots =g_s=0\}$ is homeomorphic to the algebraic variety $\{G_1(\q ,x)=\cdots=G_s(\q ,x)=0\}$ defined by polynomial equations over $\ovl\Q$. 

A result analogous to Theorem \ref{thm:homeotoalgebraicglobal} in the local case, for singularities of analytic spaces or analytic functions was proven by G. Rond in \cite{rond18}. 

\begin{rem}
Note that, by the above proof, in the special case when $\kk$ is a purely transcendental extension of $\Q$, we may replace, 
in the statement of Theorem \ref{thm:homeotoalgebraicglobal}, $\ovl\Q$ by $\Q$ if 
$\K=\R$, resp. $\Q[i]$ if 
$\K=\C$.  
In general, this is an open problem, whether every algebraic variety 
is homeomorphic to a variety defined over $\Q$, resp. $\Q[i]$.  In \cite{teissierCRAS90} B. Teissier gave an example of a complex analytic surface singularity defined over $\Q (\sqrt 5)$, which is not Whitney equisingular to any singularity defined over $\Q$. 
\end{rem}

\begin{question}{1. Open problem}Is every complex algebraic variety homeomorphic to a variety defined over 
$\Q[i]$ ? 
Is every real algebraic variety homeomorphic to a variety defined over 
$\Q$ ? 
\end{question}

\begin{question}{2. Open problem}Is every complex analytic set germ homeomorphic to a set germ defined over $\Q[i]$ ? 
Is every real analytic set germ homeomorphic to a 
set germ defined over $\Q$ ? 
\end{question}

\subsection{Analytic case}\label{ssec:deform-analytic}

Suppose now that $V$ is the germ at the origin of an analytic subset of $\K^n$ and let $g_1$,\ldots, $g_s\in\K\{x\}$ generate the ideal defining $V$.  We describe below, following \cite{mostowski84}, the construction of Zariski equisingular deformations of $V$.  The main idea is similar to that of subsection \ref{ssec:deform-polyn}, that is to use the discriminants of subsequent linear projections to construct a system of "constrains", that is equations and inequations satisfied by the $g_{i}$. These are the equations and inequations \eqref{eq: polynomials:f_i}, \eqref{eq:discriminants_i} defined below.  Then any deformation of the 
$g_{i}$ that satisfies the same constraints is Zariski equisingular.  
The main difference comes from the fact that now we are not going to use the  coefficients of the $g_{i}$, since there are infinitely many of them.  Instead we treat the equation of  \eqref{eq: polynomials:f_i}, \eqref{eq:discriminants_i}, as a system of equations on the functions $u_i(x^i), a_{i,j}(x^i)$, that is the coefficients of these  subsequent discriminants.

Let us consider a finite set of  distinguished polynomials 
$g_1, \ldots, g_s \in\K\{x\}$:  
   \begin{align*}
g_{i} ( x)= x_n^{r_i}+ \sum_{j=1}^{r_i} a_{n-1,i,j} 
(x^{n-1}) x_n^{r_i-j} ,
\end{align*}
i.e. we suppose $a_{n-1,i,j} (0) =0$ for all $i,j$. Arrange 
$a_{n-1,i,j}$ in a row vector  
$a_{n-1} \in \K\{x^{n-1}\}^{p_n}$, where $p_n:=\sum_i r_i$.  
Let $f_n$ be the product of the $g_i$'s.  The generalized discriminants 
$\D_{n,i} $ of $f_n$ are 
polynomials in the entries of $a_{n-1}$.  
  Let $l_n$ be a positive integer such that   \begin{align}\label{discriminants:n}
\D_{n,l} ( a_{n-1} )\equiv 0 \qquad l<l_n   ,
\end{align}
and  $\D_{n,l_n}  ( a_{n-1} ) \not \equiv 0$.  
 Then, after a linear change of coordinates $x^{n-1}$, by the Weierstrass Preparation Theorem, we may write 
    \begin{align*}
 \D_{n,l_n} ( a_{n-1} ) =  u_{n-1} (x^{n-1}) \Big (x_{n-1}^{p_{n-1}}
 + \sum_{j=1}^{p_{n-1}} a_{n-2,j} (x^{n-2}) x_{n-1}^{p_{n-1}-j} \Big ) 
 .
\end{align*}
where $u_{n-1}(0)\ne 0$ and for all $j$, $a_{n-2,j}(0)=0$.  We denote $$
f_{n-1} = x_{n-1}^{p_{n-1}}+
  \sum_{j=1}^{p_{n-1}} a_{n-2,j} ( x^{n-2}) x_{n-1}^{p_{n-1}-j} $$
    and the vector 
of its coefficients $a_{n-2,j}$ by $a_{n-2} \in \K\{x^{n-2}\}^{ p_{n-1}}$.   
Let $l_{n-1}$ be the positive integer such that the first $l_{n-1}-1$ generalized discriminants $\D_{n-1,l} $ 
of $f_{n-1}$ are identically zero and  $\D_{n-1,l_{n-1}} $ is not.  Then again we define 
$f_{n-2} ( x^{n-2})$ as the Weierstrass polynomial associated to 
 $\D_{n-1,l_{n-1}}  $. 

We continue this construction and  
 define a sequence of pseudopolynomials $f_{i} (  x^i )$, $i=1, \ldots, n-1$, such that 
 $f_i= x_i^{p_i}+ \sum_{j=1}^{p_i} a_{i-1,j} (x^{i-1}) x_i^{p_i-j}  $ is the Weierstrass polynomial associated to the first non-identically zero generalized discriminant $\D_{i+1,l_{i+1}} ( a_{i} )$ of $f_{i+1}$, 
where  we denote in general $a_{i}= (a_{i,1} , \ldots , a_{i,p_{i+1}} )$, 
  \begin{align}\label{eq: polynomials:f_i}
 \D_{i+1,l_{i+1}} ( a_{i} ) =  u_{i} (x^{i})  \Big (x_i^{p_i}+ \sum_{j=1}^{p_i} a_{i-1,j} (x^{i-1}) x_i^{p_i-j}  \Big) ,  
 \quad i=0,...,n-1 .
\end{align}
 Thus, for $i=0,...,n-1,$ the vector  
of functions $a_i$ satisfies  
  \begin{align}\label{eq:discriminants_i}
\D_{i+1,l} ( a_{i} )\equiv 0 \text { for } l<l_{i+1}   ,  \quad \D_{i+1,l_{i+1}} ( a_{i} ) \ne 0.
\end{align}

This means in particular that 
  \begin{align*}
\D_{1,k} ( a_{0} ) \equiv 0 \quad \text {for } l<l_1  \text { and }  \D_{1,l_1} ( a_{0} ) \equiv u_0 ,
\end{align*}
where $u_0$ is a non-zero constant.  

The following theorem follows from the construction of the family 
$u_i(t, x^i)$, $a_{i,j}(t,x^i)$.

\begin{thm}\label{thm:deformation-analytic}
Suppose that we extend all function $u_i(x^i), a_{i,j}(x^i)$ to analytic families $u_i(t, x^i), a_{i,j}(t,x^i)\in \K\{t,x\}$, $u_i(0, x^i)= u_i(x^i), a_{i,j}(0,x^i)= a_{i,j}(x^i)$,  where $t\in \K^ l$ is considered as a parameter. If the identities and 
the inequations of \eqref{eq: polynomials:f_i}, \eqref{eq:discriminants_i} are still satisfied by these extensions $u_i(t, x^i), a_{i,j}(t,x^i)$ 
then the family $f_n(t,x)=0$ is Zariski equisingular.  
\end{thm}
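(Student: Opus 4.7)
The plan is to verify directly that the family $F_n(t,x) = f_n(t,x)$ fits into a system of pseudopolynomials satisfying Definition \ref{def:ZAinfamilies}, with the system being obtained by simply replacing each $a_{i-1,j}(x^{i-1})$ in the construction of Section \ref{ssec:deform-analytic} with its $t$-extension $a_{i-1,j}(t,x^{i-1})$. Concretely, for $i = 1, \ldots, n-1$ I would set
\begin{align*}
F_i(t,x^i) := x_i^{p_i} + \sum_{j=1}^{p_i} a_{i-1,j}(t,x^{i-1})\, x_i^{p_i-j},
\end{align*}
and $F_n(t,x) := f_n(t,x) = \prod_{i=1}^s g_i(t,x)$, which is already monic in $x_n$ (being a product of monic polynomials), hence its own Weierstrass polynomial. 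This immediately gives condition (1) of Definition \ref{def:ZAinfamilies}.

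The heart of the matter is condition (2). Since the preserved identities of \eqref{eq: polynomials:f_i} read
\begin{align*}
\D_{i+1, l_{i+1}}(a_i(t,x^i)) = u_i(t,x^i)\cdot F_i(t,x^i),
\end{align*}
and the preserved inequations \eqref{eq:discriminants_i} say that $\D_{i+1,l}(a_i(t,x^i)) \equiv 0$ for $l < l_{i+1}$ while $\D_{i+1,l_{i+1}}(a_i(t,x^i)) \not\equiv 0$, it follows that $\D_{i+1,l_{i+1}}(a_i(t,x^i))$ is precisely the first non-identically-zero generalized discriminant of $F_{i+1}$ in the deformed family. Because $u_i(0,0) \ne 0$ by the original construction and this inequation is preserved under deformation, $u_i(t,x^i)$ remains an analytic unit near the origin, so $F_i = u_i^{-1}\cdot \D_{i+1,l_{i+1}}(a_i)$ is divisible by the first non-vanishing generalized discriminant of $F_{i+1}$, as required.

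For condition (3), the recursive construction terminates at the bottom level with $\D_{1,l_1}(a_0(t)) = u_0(t)$, which by the preserved inequation remains a unit (nonzero at $t=0$). Its Weierstrass polynomial is therefore the constant $1$, so we may take $k = 0$ and $F_0 \equiv 1$, completing the verification that the system $(F_i)_{0 \le i \le n}$ fulfills Definition \ref{def:ZAinfamilies}.

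The only genuine subtlety, and what I expect to be the main point of care rather than a real obstacle, is to check that each $F_i$ actually qualifies as a pseudopolynomial in the sense of Definition \ref{def:system}, i.e. that the coefficients $a_{i-1,j}(t,x^{i-1})$ vanish identically on $T = U \times \{0\}$. In the original family this holds since $a_{i-1,j}(x^{i-1})\in (x^{i-1})$, and for the deformation one either chooses the extension to preserve this property directly, or deduces it inductively by restricting the identity $\D_{i+1, l_{i+1}}(a_i(t,0)) = u_i(t,0) F_i(t,0)$ to $x^i = 0$ and exploiting that $u_i$ is a unit together with the vanishing at $t = 0$. Once this is settled, the three conditions above yield Zariski equisingularity of $f_n(t,x) = 0$ with respect to $t$.
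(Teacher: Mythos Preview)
Your argument is correct and matches the paper's approach exactly; the paper gives no proof beyond the one-line remark that the theorem ``follows from the construction of the family $u_i(t,x^i), a_{i,j}(t,x^i)$'', and you have simply spelled out the verification of conditions (1)--(3) of Definition~\ref{def:ZAinfamilies} that this remark points to.

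One comment on the subtlety you flag at the end. Your proposed inductive deduction does not quite close: from $a_{i,j}(t,0)=0$ for all $j$ one gets $\D_{i+1,l_{i+1}}(a_i(t,0))=0$ (since $l_{i+1}<p_{i+1}$), hence $u_i(t,0)\,F_i(t,0)=0$, but $F_i(t,0)=a_{i-1,p_i}(t,0)$ only, so you recover the vanishing of the constant term of $F_i$, not of all its coefficients. In practice this is handled by your first alternative: in the applications (e.g.\ the proof of Theorem~\ref{thm:homeotoalgebraic}) the relations $a_{i-1,j}\in (x^{i-1})\K\{x^{i-1}\}$ are simply added to the system \eqref{eq: polynomials:f_i}--\eqref{eq:discriminants_i} that the approximation must preserve, so the extensions satisfy $a_{i-1,j}(t,0)\equiv 0$ by construction. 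The paper does not make this explicit here either.
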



\subsection{Application: Analytic set germs are homeomorphic to algebraic ones}\label{ssec:homeoanalytic-algebraic}

The problem of approximation of analytic objects (sets or mappings) by algebraic ones has a long history,  see e.g. \cite{bochnakkucharz84} and the bibliography therein. In particular, several results were obtained in the case of isolated singularities. 
The local topological algebraicity of analytic set germs, in the general set-up, was first established in \cite{mostowski84} by Mostowski.  Given an analytic set germ $(V,0) \subset (\K^n,0)$, Mostowski shows the existence 
of a local homeomorphism $\tilde h: (\K^{2n+1},0) \to (\K^{2n+1},0)$ such that, after the embedding 
$(V,0) \subset (\K^n,0) \subset (\K^{2n+1},0)$, the image $\tilde h(V)$ is algebraic.  
It is easy to see that Mostowski's proof together with  Theorem 2 of \cite{bochnakkucharz84} gives 
the following result.  

\begin{thm} \label{thm:homeotoalgebraic}  
Let  $\K = \R$ or $\C$.  
Let  $(V,0) \subset (\K^n,0)$ be an analytic germ.  
 Then there is a homeomorphism $h: (\K^n,0) \to (\K^n,0)$ such that $h(V)$ 
 is the germ of an algebraic subset of $\K^n$.  
\end{thm}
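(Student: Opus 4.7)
The plan is to combine Mostowski's Zariski equisingular deformation method with a dimension-reduction device of Bochnak--Kucharz. The strategy is to first produce an algebraic germ homeomorphic to $(V,0)$ after embedding into a larger ambient space, and then to bring the resulting ambient homeomorphism back down to the original $\K^n$.

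First I would embed $(V,0) \subset (\K^n,0) \hookrightarrow (\K^{2n+1},0)$ by the standard inclusion into the first $n$ coordinates, and choose generators $g_1,\ldots,g_s$ of the ideal defining this embedded germ. After a generic linear change of coordinates, one puts them in distinguished Weierstrass form with respect to the last variable and applies the inductive discriminant construction of Section \ref{ssec:deform-analytic}. This produces a sequence of pseudopolynomials $f_i$ with analytic coefficients $u_i(x^i)$, $a_{i,j}(x^i)$ satisfying the finite list of polynomial identities and non-vanishing conditions \eqref{eq: polynomials:f_i}, \eqref{eq:discriminants_i}. Working in $\K^{2n+1}$ rather than $\K^n$ gives the extra coordinate freedom needed to carry out the successive linear changes and Weierstrass preparations so that each $f_i$ is indeed a Weierstrass polynomial in its top variable.

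The core step is then to approximate the analytic coefficients $u_i, a_{i,j}$ by algebraic (polynomial) functions $\tilde u_i, \tilde a_{i,j}$ in such a way that the polynomial identities of \eqref{eq: polynomials:f_i} and \eqref{eq:discriminants_i} continue to hold and the non-vanishing conditions on the $u_i$ and on the leading discriminants are preserved. This is exactly the situation in which Artin's approximation theorem applies: the constraints form a system of polynomial equations (with parameters $x^i$) over the ring of convergent power series, which admits an analytic solution, and hence by Artin approximation admits algebraic solutions arbitrarily close to it in the Krull topology. A sufficiently fine such approximation preserves each inequation. Connecting the analytic data to the algebraic data by an analytic path $t \mapsto (u_i(t,\cdot),a_{i,j}(t,\cdot))$ satisfying the same constraints gives, by Theorem \ref{thm:deformation-analytic}, a Zariski equisingular family; by Theorem \ref{thm:arcwise-analytic_triviality} it is topologically trivial, and the trivialization at $t=1$ yields a local homeomorphism $\tilde h : (\K^{2n+1},0) \to (\K^{2n+1},0)$ carrying $V$ onto an algebraic germ in $\K^{2n+1}$.

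Finally, to return to the original ambient space, one invokes Theorem 2 of \cite{bochnakkucharz84}, which roughly says that two analytic subgerms of $(\K^n,0)$ that become ambient topologically equivalent after embedding into $(\K^N,0)$ with $N > n$ are already ambient topologically equivalent in $(\K^n,0)$. Applied to $(V,0)$ and its Mostowski algebraic model, this produces the desired homeomorphism $h : (\K^n,0)\to(\K^n,0)$ with $h(V)$ algebraic. The principal obstacle is the algebraic approximation step: one must simultaneously preserve a nested list of polynomial identities encoding the entire discriminant tower and maintain the regularity of the leading coefficients, and this requires a careful inductive use of Artin's theorem compatible with the recursive Weierstrass structure built in Section \ref{ssec:deform-analytic}; controlling the loss of transversality under approximation is precisely where the passage to $\K^{2n+1}$ and the Bochnak--Kucharz descent pay off.
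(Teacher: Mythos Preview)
Your overall strategy coincides with the paper's account of Mostowski's proof: build the discriminant tower, deform the analytic data to algebraic data while preserving the constraints \eqref{eq: polynomials:f_i}--\eqref{eq:discriminants_i}, trivialize the resulting Zariski equisingular family, and then descend via Theorem~2 of \cite{bochnakkucharz84}. But the approximation step, which you correctly flag as the principal obstacle, has two genuine gaps.

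First, ordinary Artin approximation only produces algebraic power series \emph{close} to the analytic solution; it does not produce an analytic one-parameter family $t\mapsto (u_i(t,\cdot),a_{i,j}(t,\cdot))$ joining the two and satisfying the constraints for every $t$. That connecting family is exactly what Theorem~\ref{thm:deformation-analytic} needs, and obtaining it requires P{\l}oski's parametrized refinement of Artin approximation \cite{ploski74}, not the plain theorem. Second, even P{\l}oski's version does not respect the \emph{nested} variable structure: the solutions $u_i,a_{i,j}$ must depend only on $x^i=(x_1,\ldots,x_i)$, and a naive application may return approximants depending on all variables, destroying the pseudopolynomial form. The paper stresses that this is the crux: one needs the nested Artin--P{\l}oski--Popescu theorem (via N\'eron desingularization \cite{popescu86}, as carried out in \cite{BPR17}), or else Mostowski's original inductive workaround, which interleaves the linear changes of coordinates with the P{\l}oski approximations level by level. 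Your phrase ``careful inductive use of Artin's theorem'' gestures at this but does not supply either mechanism.

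Two smaller points. Artin-type approximation yields algebraic power series (Nash germs), not polynomials; the passage from Nash to polynomial is a separate step, handled in \cite{bochnakkucharz84} via the Artin--Mazur theorem. And your stated reason for passing to $\K^{2n+1}$ is not the right one: the Weierstrass preparations and linear changes of Section~\ref{ssec:deform-analytic} already work in $\K^n$; the extra dimensions in Mostowski's argument are tied to his step-by-step approximation procedure, not to any obstruction in setting up the discriminant tower.
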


We remark that in \cite {mostowski84} Mostowski states  his results 
only for $\K=\R$ but his proof also works for $\K=\C$.

The proof of Theorem \ref{thm:homeotoalgebraic} is, in principle, 
similar to the one of Theorem \ref{thm:homeotoalgebraicglobal}, but is techniquely much more demanding.   The main idea is to use Theorem \ref{thm:deformation-analytic} and deform analytic solutions 
of \eqref{eq: polynomials:f_i} and \eqref{eq:discriminants_i} 
to algebraic ones. Here by algebraic solutions we mean given by the ones 
defined by algebraic power series (an algebraic power series is a power series algebraic over $\K[x_1,...,x_n]$ - for example the power series 
$u(x)$ such that $u(0)=1$ and $u(x)^2=1+x$). Recall that the Artin approximation theorem states that convergent power series solutions of algebraic equations can be approximated by algebraic power series solutions.  
Clearly, we need a stronger result, not only an approximation but also a parameterized deformation from the old, convergent solutions to the new, algebraic  ones.  This is provided 
by P{\l}oski's  version of Artin approximation, see \cite{ploski74}. 
Finally, in order to apply Theorem \ref{thm:deformation-analytic} we need the nested Artin approximation, i.e. solutions  
$u_i(t,x^i), a_{i,j}(t,x^i)\in \K\{t,x^i\}$, of \eqref{eq: polynomials:f_i} and \eqref{eq:discriminants_i}, that depend only on  $x_1, \ldots, x_i $ 
and not on $x_k$ for $k>i$.  Nested Artin Approximation Theorem follows from  the N\'eron Desingularization, proven by Popescu \cite{popescu86}, and was not available at the time Mostowski's paper \cite {mostowski84} was written.  Instead, Mostowski proposes a recursive construction 
 of the system of equations \eqref{eq: polynomials:f_i} and \eqref{eq:discriminants_i} giving Zariski equisingularity conditions by local linear changes of coordinates and, 
  at the same time, step by step,  provides the deformation-approximation by algebraic power series solutions following the recipe given in \cite{ploski74}.  

One may shorten significantly Mostowski's construction using a stronger result, the nested variant of P{\l}oski's version Artin Approximation.  This is done in \cite{BPR17}, where
such Nested Artin-P{\l}oski-Popescu Approximation Theorem is proven. 
This theorem was used in \cite{BPR17} to deform $u_i(x^i), a_{i,j}(x^i)$ to algebraic power series solutions of \eqref{eq: polynomials:f_i} and \eqref{eq:discriminants_i}.  Furthermore,  a result of Bochnak-Kucharz \cite{bochnakkucharz84}, 
based on  Artin-Mazur Theorem of \cite{artinmazur65},  allows one 
to approximate the zeros of algebraic power series (or equivalently germs of Nash functions) by the zeros of polynomial functions. 

A stronger version of Theorem \ref{thm:homeotoalgebraic} was given in \cite{BKPR18} where it was shown that such a homeomorphism $h$ can be found with any prescribed order of tangency at the origin. \\

\begin{question}{3. Open problem}{What is the best level of regularity of homeomorphisms for which the statement of Theorem \ref{thm:homeotoalgebraic} holds ? It is known, for instance,  that Theorem \ref{thm:homeotoalgebraic}  is no longer true if one replaces "homeomorphism" by "diffeomorphism", for examples see \cite{bochnakkucharz84} and the last section of \cite{BPR17}. It is not known whether  Theorem \ref{thm:homeotoalgebraic} holds true if one requires the homeomorphism $h$ to be bi-Lipschitz.} 
\end{question} 

\subsection{Equisingularity of function germs.} \label{ssec:functions}

Zariski Equisingularity can also be used to construct topologically trivial deformations of analytic map germs, see  \cite{varchenkoizv72}. Let us consider first the case of functions as studied in \cite{BPR17},  that is the mappings with values in $\K$.  
Given a family $g_t(y) = g(t, y_1, \ldots , y_{n-1})$ of such germs parameterized by $t\in (T,t_0)$  We consider the associated family of set germs defined by the graph of $g$, the zero set of 
$F(t,x_1, \ldots, x_n) := x_1 - g(t, x_2, \ldots , x_{n})$, and construct a topological trivialization $h_t$ of $V=V(F)$ that does not move the variable $x_1$
\begin{align}\label{eq:preservex_1}
h_t (x_1, \ldots x_n) = (x_1 , \hat h_t (x_1, x_2, \ldots, x_n))  
\end{align}
so that $V\ni (t_0,x)$ if and only if $(t, h_t (x)) \in V$.  
Set $\sigma_t (y) := \hat h_t (g(y), y) $.  
Then 
$$
g_t\circ \sigma_t = g_{t_0}, 
$$
that is $g_t$ and $g_{t_0}$ are right (i.e. by a homeomorphism of the source) topologically equivalent.  Moreover, since $\sigma_t$ depends continuously on $t$ the family $g_t$ is topologically trivial.  

We now follow the main ideas of \cite{BPR17}  in order to explain the construction of topological trivialization of a family $V_t$ of analytic subspaces of $(\K ^n,0)$ that preserves the variable $x_1$. 
 For this we adapt the definition of Zariski equisingular families,  Definition \ref{def:ZAinfamilies}, by changing it slightly, and also 
 by changing accordingly the construction of equisingular deformations.   
The point is that, when we make linear changes of coordinates in order to replace a function by its Weierstrass polynomial, now we are no longer  allowed to change the variable $x_1$ and mix it with the other variables.  
So if one of the subsequent discriminants is divisible by $x_1$ we cannot proceed the way we have done it before.   Therefore we replace the assumptions (2) and (3) of Definition \ref{def:ZAinfamilies} by 
 \begin{enumerate}
\item [2'.]
There are $q_i\in \N$ such that the discriminant of $(F_{i})_{red}$ divides $x_1^{q_i} F_{i-1} (t, x^{i-1})$. 
\item [3'.]
$F_1\equiv 1$. 
\end{enumerate}

Then the construction of the homeomorphisms that we presented in Section \ref{ssec:topequising} gives the following version of Theorem \ref{thm:topological_triviality}, that is a simplified statement of 
\cite[Theorem 5.1]{BPR17}.

\begin{thm}\label{thm:triv-functions} 
Suppose that $V$ is Zariski equisingular with respect to the parameter $t$ in the sense of  Definition \ref{def:ZAinfamilies} with the conditions 2 and 3  replaced by conditions 2'and 3'.  Then we may require that the homeomorphisms $\Ph$ of \eqref{eq:trivialization} satisfies 
additionally  $\Ps_1(t, x_1) =x_1$. 
\end{thm}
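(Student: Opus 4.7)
The plan is to run the inductive construction of Varchenko (cf.\ subsection \ref{ssec:topequising}), or its arc-wise analytic refinement from \cite{PP17}, essentially unchanged, but with a new base case and a slightly enlarged check in the inductive step to accommodate the weaker divisibility condition 2'.

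First, I would set $\Psi_1(t, x_1) := x_1$. The point of condition 3' is exactly to make this valid: since $F_1 \equiv 1$ has empty zero set, there is no hypersurface in the variable $x_1$ that $\Phi$ needs to trivialize at the first step, and we are free to take the identity on $x_1$. As a consequence, for every $i \geq 1$ the partial map $\Phi_i$ of \eqref{eq:Phiinduction} is triangular with first component equal to $x_1$, and therefore preserves the hyperplane $\{x_1 = 0\}$ (and in fact every level set $\{x_1 = c\}$).

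Next I would carry out the inductive step in the form of Varchenko, cf.\ Step 1 and Step 2 of subsection \ref{ssec:topequising}, or equivalently by the explicit Whitney-interpolation formula of \cite{PP17}. Assume inductively that $\Phi_i$ is constructed, trivializes $V(F_i)$, and satisfies $\Psi_1(t,x_1)=x_1$. To produce $\Phi_{i+1}$, one first lifts $\Phi_i$ to $V(F_{i+1})$ using continuity of roots (Lemma \ref{lem:multiplicityconstant}), then extends the result to the ambient space. The only place condition 2 entered the classical argument is in asserting that $\Phi_i$ preserves the discriminant locus $\Delta_{F_{i+1}}$, which was guaranteed by $\Delta_{F_{i+1}} \mid F_i$. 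Under 2' we instead have $\Delta_{F_{i+1}} \mid x_1^{q_{i+1}} F_i$, so the discriminant locus sits inside $V(F_i) \cup \{x_1 = 0\}$. Both pieces are preserved by $\Phi_i$: the first by the inductive hypothesis, the second by the property $\Psi_1(t,x_1)=x_1$ ensured in the previous paragraph. Hence Lemma \ref{lem:multiplicityconstant} applies and yields a unique continuous lift, after which the ambient extension (covering isotopy or Whitney interpolation in the $x_{i+1}$-direction alone) proceeds without touching the $x_1$-coordinate.

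The main obstacle to watch for is that the additional factor $x_1^{q_{i+1}}$ can enlarge the discriminant locus precisely along $\{x_1=0\}$, which is exactly where we are insisting that $\Phi$ act as the identity in the first slot. The saving point is that $\{x_1=0\}$ is invariant under each $\Phi_i$, so one may refine the stratification used in the covering isotopy lemma by intersecting with $\{x_1=0\}$; the finite branched cover $F_{i+1}^{-1}(0)\to \K^{l+i}$ remains trivial over each open stratum of this refined decomposition, and the inductive extension goes through. The resulting $\Phi$ has all the properties of Theorem \ref{thm:topological_triviality} (and, with the arc-wise analytic formulas of \cite{PP17}, those of Theorem \ref{thm:arcwise-analytic_triviality}), together with the additional requirement $\Psi_1(t,x_1)=x_1$.
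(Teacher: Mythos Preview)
Your proposal is correct and follows essentially the same approach as the paper's own (brief) proof: start with $\Psi_1(t,x_1)=x_1$, justified by $F_1\equiv 1$, and then run the Varchenko lifting induction, using at each step that $\Phi_i$ preserves both $V(F_i)$ and $\{x_1=0\}$, hence the discriminant locus of $F_{i+1}$ under condition~2'. Your treatment is in fact slightly more explicit than the paper's sketch, in that you spell out the decomposition $\Delta_{F_{i+1}}\subset V(F_i)\cup\{x_1=0\}$ and the stratification refinement needed for the covering isotopy argument; the paper simply asserts that $\Phi_i$ preserves the discriminant locus and leaves this unpacking to the reader.
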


\begin{proof} 
Idea of proof. 

Because $F_1\equiv 1$, by 2', the discriminant locus of $F_2$ is either empty or given by $x_1=0$.  
Therefore we may take $\Ps_1(t, x_1) =x_1$.  
Then we show by induction on $i$  that each $\Ph_i$ 
can be lifted so that the lift $\Ph_{i+1}$  preserves the zero set of 
$F_{i+1}$ and the values of $x_1$.  The former 
condition follows by inductive assumption and the fact that $\Ph_i$ preserves the discriminant locus of $F_{i+1}$.  
The latter condition is satisfied trivially since $\Ph_{i+1}$ 
is a lift of $\Ph_{i}$.  
\end{proof}

 As a corollary we obtain the following result.

\begin{thm} [{\cite[Theorem 1.2]{BPR17}}]
\label{thm:homeotopolynomial}  
Let  $\K = \R$ or $\C$.  
 Let  $g: (\K^n,0)\to (\K, 0)$ be an analytic function germ.  
 Then there is a homeomorphism $\sigma : (\K^n,0) \to (\K^n,0)$ such that $g\circ \sigma$ 
 is the germ of a polynomial.  
\end{thm}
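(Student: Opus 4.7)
The plan is to adapt the proof of Theorem~\ref{thm:homeotoalgebraic}, but working with the graph of $g$ and insisting that the trivializing homeomorphism preserves the first coordinate, which lets us reinterpret the result in terms of right-equivalence of functions.

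First, I would embed $g$ via its graph: set $F(x_1,\ldots,x_{n+1}) := x_1 - g(x_2,\ldots,x_{n+1})$ and consider the hypersurface germ $V=V(F)\subset(\K^{n+1},0)$. Note that $V$ is already in the Weierstrass form with respect to $x_1$ (it is monic linear). I would then run the inductive construction of Section~\ref{ssec:deform-analytic} to build a system of pseudopolynomials $f_i(x^i)$, $1\le i \le n+1$, satisfying equations \eqref{eq: polynomials:f_i} and inequations \eqref{eq:discriminants_i}, but using the variant of Definition~\ref{def:ZAinfamilies} appropriate for functions: successive discriminants are allowed to acquire a factor of a power of $x_1$ (condition 2' of Section~\ref{ssec:functions}), and the recursion is stopped as soon as $f_1\equiv 1$ (condition 3'). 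Crucially, each intermediate linear change of coordinates applied before Weierstrass preparation must act only on $x_2,\ldots,x_{n+1}$, never mixing $x_1$ with the other variables; this is precisely what conditions 2' and 3' make possible.

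Next, I would invoke the Nested Artin--P\l oski--Popescu Approximation Theorem, exactly as in the proof of Theorem~\ref{thm:homeotoalgebraic} via \cite{BPR17}, to produce a one-parameter deformation $u_i(t,x^i)$, $a_{i,j}(t,x^i)$ of the coefficients that solves the same system \eqref{eq: polynomials:f_i}--\eqref{eq:discriminants_i} and such that the value at $t=0$ is the original data while the value at $t=1$ consists of algebraic power series (Nash germs). A further application of the Bochnak--Kucharz approximation based on Artin--Mazur, again as in \cite{BPR17}, lets us replace the Nash data at $t=1$ by polynomial data, yielding in particular a polynomial function $\bar g(x_2,\ldots,x_{n+1})$ such that $F_{\bar g}:=x_1-\bar g$ fits into the same Zariski equisingular family. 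By the variant of Theorem~\ref{thm:deformation-analytic} corresponding to conditions 2' and 3', this family is Zariski equisingular in the function-preserving sense, so Theorem~\ref{thm:triv-functions} yields a local trivialization
\begin{equation*}
\Phi(t,x)=(t,\Psi(t,x)),\qquad \Psi_1(t,x_1)=x_1,
\end{equation*}
mapping the zero set of $F$ to the zero set of $F_{\bar g}$.

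Finally, I would extract $\sigma$. Let $h:(\K^{n+1},0)\to(\K^{n+1},0)$ be the $t=1$ slice of $\Phi$; by construction $h(x_1,x')=(x_1,\hat h(x_1,x'))$ and $h(\mathrm{graph}(g))=\mathrm{graph}(\bar g)$. Setting $\sigma_0(x'):=\hat h(g(x'),x')$ gives a local homeomorphism of $(\K^n,0)$ with $\bar g\circ\sigma_0=g$, so the homeomorphism $\sigma:=\sigma_0^{-1}$ satisfies $g\circ\sigma=\bar g$, a polynomial. The hardest part is the passage from the analytic coefficients $u_i,a_{i,j}$ to algebraic/polynomial ones while preserving the nested dependence on $x^i$ (so that the resulting system still defines a Zariski equisingular deformation); this is the role of the nested version of P\l oski's theorem, and is what prevents the simpler strategy of just invoking Artin's theorem on each level separately from working.
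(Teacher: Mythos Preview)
Your overall architecture matches the paper: embed $g$ via its graph, run the construction of Section~\ref{ssec:deform-analytic} under the modified conditions 2' and 3' so that the variable $x_1$ is never mixed with the others, apply the nested Artin--P{\l}oski--Popescu theorem to deform to Nash data, and then pass from Nash to polynomial. The extraction of $\sigma$ from the $x_1$-preserving trivialization via $\sigma_0(x')=\hat h(g(x'),x')$ is also exactly right.

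There is, however, a genuine gap in your treatment of the Nash-to-polynomial step. You write that Bochnak--Kucharz ``lets us replace the Nash data at $t=1$ by polynomial data'' so that $x_1-\bar g$ ``fits into the same Zariski equisingular family'', and then invoke Theorem~\ref{thm:triv-functions} once more. This is not how that step works. The Artin--Mazur/Bochnak--Kucharz machinery does not produce a deformation of the whole nested system $(u_i,a_{i,j})$ to polynomial solutions of \eqref{eq: polynomials:f_i}--\eqref{eq:discriminants_i}; it embeds a Nash germ as a component of an algebraic germ in a higher-dimensional ambient space, which destroys the pseudopolynomial structure on which Theorem~\ref{thm:triv-functions} relies. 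As the paper says explicitly, the Zariski equisingularity argument ``gives such homeomorphism to a Nash function and not directly to a polynomial, since we cannot get a better result just using the Artin approximation.''

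The actual passage from a Nash germ to a polynomial germ, carried out in \cite[Section~5.5]{BPR17}, is a separate argument outside the Zariski equisingularity framework: one uses Artin--Mazur to realize the Nash function as the restriction of a polynomial map and then a Thom stratification (Thom--Mather isotopy) argument to obtain the topological right-equivalence. So your proof needs two trivializations composed: the Zariski-equisingular one (analytic $\rightsquigarrow$ Nash) and a stratification-theoretic one (Nash $\rightsquigarrow$ polynomial). Once you replace your single-step claim by this two-step argument, the proof is complete and coincides with the paper's.
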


For the proof of Theorem \ref{thm:homeotopolynomial} one follows the 
proof of Theorem \ref{thm:homeotoalgebraic} that gives such homeomorphism to a Nash function and not directly to a polynomial, since we cannot get a better result just using the Artin approximation.  
Recall that a function is \emph{Nash} if it is analytic and satisfies an algebraic equation. Thus $f:(\K^n,0) \to \K$ is the germ of a Nash function if and only if its Taylor series 
is an algebraic power series. For more details on real and complex Nash functions and sets see \cite{BCR}, \cite{bochnakkucharz84}. 
The final step of the proof of Theorem \ref{thm:homeotopolynomial}, a homeomorphism of a Nash germ to a polynomial germ follows from 
\cite{bochnakkucharz84}, that is in essence from the Artin-Mazur Theorem, 
and a Thom stratification argument, see section 5.5 of \cite{BPR17} for details.

There is a common generalization of Theorems \ref{thm:homeotopolynomial} and \ref{thm:homeotoalgebraic}.

\begin{thm} [{\cite[Theorem 1.3]{BPR17}}] \label{thm:generalhomeo}   
Let  $(V_i,0) \subset (\K^n,0)$ be a finite family of analytic set germs and let  $g: (\K^n,0)\to (\K, 0)$ be an analytic function germ.  Then there is a homeomorphism $\sigma : (\K^n,0) \to (\K^n,0)$ such that $g\circ \sigma$ 
 is the germ of a polynomial, and  for each $i$, $\sigma^{-1} (V_i)$ 
 is the germ of an algebraic subset of $\K^n$.  
\end{thm}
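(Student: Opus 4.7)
The plan is to combine the two arguments that establish Theorems \ref{thm:homeotoalgebraic} and \ref{thm:homeotopolynomial}, carrying them out simultaneously on a single Zariski equisingular family. The key observation is that both rely on applying Theorem \ref{thm:deformation-analytic}, and that Theorem \ref{thm:many-components} together with the $x_1$-preserving variant from subsection \ref{ssec:functions} (Theorem \ref{thm:triv-functions}) already produces, from one master system of discriminants, a trivialization that simultaneously respects finitely many subvarieties and preserves a chosen coordinate.

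First, I would reduce to a set-theoretic statement by replacing $g$ with its graph. Embed $(\K^n,0)$ into $(\K^{n+1},0)$ as $y \mapsto (g(y),y)$, use coordinates $(x_1,y_1,\dots,y_n)$, and introduce the hypersurface $W_0 = \{x_1 - g(y)=0\}$ together with the ambient cylinders $W_i = \K\times V_i \subset \K^{n+1}$. A self-homeomorphism of $(\K^{n+1},0)$ of the form \eqref{eq:preservex_1} that sends $W_0$ onto the graph of a polynomial and each $W_i$ onto an algebraic germ projects to the desired $\sigma$ on $(\K^n,0)$, exactly as in the reduction used to derive Theorem \ref{thm:homeotopolynomial}.

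Next, pick distinguished analytic equations generating the ideals of $W_0, W_1,\dots,W_k$, with the generator of $W_0$ being $x_1-g(y)$, which after a linear change of the $y$-variables only is regular in one of the $y_j$. Form their product $f_n$, and run the construction of subsection \ref{ssec:deform-analytic} using only linear coordinate changes that do not mix $x_1$ with the $y$-variables; this is the $x_1$-preserving variant of Definition \ref{def:ZAinfamilies} with conditions 2' and 3'. The outcome is a finite system of equations and inequations \eqref{eq: polynomials:f_i}--\eqref{eq:discriminants_i} on the analytic functions $u_i(x^i), a_{i,j}(x^i)$ such that any tuple satisfying the same system produces a Zariski equisingular family whose topological trivialization (Theorem \ref{thm:triv-functions}) preserves $x_1$. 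By Theorem \ref{thm:many-components} the trivialization simultaneously carries each irreducible factor of $f_n$ to itself, so $W_0$ and the $W_i$'s are all respected at once.

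Now I would apply the Nested Artin--P{\l}oski--Popescu Approximation Theorem of \cite{BPR17} to the system \eqref{eq: polynomials:f_i}--\eqref{eq:discriminants_i} to obtain a continuous deformation $(u_i(t,x^i), a_{i,j}(t,x^i))$ from the original analytic solution at $t=0$ to a nested algebraic (Nash) power series solution at $t=1$. Theorem \ref{thm:deformation-analytic} combined with Theorem \ref{thm:triv-functions} then produces an $x_1$-preserving topological trivialization of this family, yielding a homeomorphism of $(\K^{n+1},0)$ simultaneously mapping $W_0$ onto the graph of a Nash function $\tilde g$ and each $W_i$ onto a Nash germ. Finally, the Bochnak--Kucharz theorem based on Artin--Mazur, combined with the Thom stratification argument used in the proof of Theorem \ref{thm:homeotopolynomial} in \cite{BPR17}, replaces the Nash germ by a polynomial one through a further $x_1$-preserving homeomorphism compatible with all $W_i$ simultaneously; projecting back to $(\K^n,0)$ gives $\sigma$.

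The step I expect to be the real obstacle is the bookkeeping needed to run everything in parallel: one must set up a single chain of successive generalized discriminants (applied to the product of all chosen defining equations, $x_1-g$ included) and avoid, at every stage, any coordinate change that would mix $x_1$ with the remaining variables, while ensuring that the system \eqref{eq: polynomials:f_i}--\eqref{eq:discriminants_i} can still be set up and closed at some $f_{k_0}=1$. Once this is arranged, the Nested Artin--P{\l}oski--Popescu deformation and the Nash-to-polynomial step fit together essentially as in the two earlier theorems, and Theorem \ref{thm:many-components} takes care of simultaneously preserving each $W_i$.
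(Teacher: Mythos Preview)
Your proposal is correct and follows exactly the route the paper indicates: the paper does not give a standalone proof of Theorem~\ref{thm:generalhomeo} but presents it as the ``common generalization'' of Theorems~\ref{thm:homeotoalgebraic} and~\ref{thm:homeotopolynomial}, citing \cite[Theorem~1.3]{BPR17}. Your plan --- take the product of defining equations (including $x_1-g$), run the $x_1$-preserving discriminant construction of subsection~\ref{ssec:functions}, invoke Theorem~\ref{thm:many-components} to trivialize all factors at once, deform to Nash via Nested Artin--P{\l}oski--Popescu, then pass to polynomials via Bochnak--Kucharz/Artin--Mazur and a Thom stratification --- is precisely the combination of the two arguments the paper sketches, and your identification of the ``bookkeeping'' step (running subsection~\ref{ssec:deform-analytic} while never mixing $x_1$ with the other variables) as the only genuine care point is accurate.
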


Theorem \ref{thm:generalhomeo} cannot be extended to many functions or to maps with values in $\K^p$ for $p > 1$, see  \cite[Example 6.3]{BPR17}. 

\begin{cor}[{\cite[Corollary 1.4]{BPR17}}]
Let  $g: (V,p)\to (\K,0)$ be an analytic function germ defined on the germ $(V,p)$ of an analytic space.  Then there 
exists an algebraic affine variety $V_1$, a point $p_1\in V_1$, the germ of a polynomial function $g_1:(V_1,p_1)\to 
(\K,0)$ and  a homeomorphism $\sigma : (V_1,p_1) \to (V,p)$ such that $g_1= g\circ \sigma$.  
\end{cor}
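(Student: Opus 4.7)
The plan is to reduce the statement to Theorem \ref{thm:generalhomeo} by embedding $(V,p)$ into an ambient affine space and extending $g$ to the ambient space. First, since $(V,p)$ is the germ of an analytic space, we may choose a local embedding $(V,p) \hookrightarrow (\K^n,0)$, so that after translation we can assume $p=0$. Second, we extend $g$ to an analytic function germ $\tilde g : (\K^n,0) \to (\K,0)$ defined on an open neighborhood of the origin; such an extension exists because an analytic function on an analytic subspace of $(\K^n,0)$ is, locally, the restriction of an analytic function on the ambient space (this is a standard consequence of the coherence of $\mathcal O_{\K^n}$ and the surjectivity of $\mathcal O_{\K^n,0} \to \mathcal O_{V,0}$).

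Next, I would apply Theorem \ref{thm:generalhomeo} to the single-member family $\{V\}$ of analytic set germs in $(\K^n,0)$ together with the extended function germ $\tilde g$. This produces a homeomorphism $\sigma : (\K^n,0) \to (\K^n,0)$ such that $\tilde g \circ \sigma$ is the germ of a polynomial $P \in \K[x_1,\ldots,x_n]$, and such that $V_1 := \sigma^{-1}(V)$ is the germ of an algebraic subset of $\K^n$. Set $p_1 := \sigma^{-1}(0) = 0 \in V_1$ and let $g_1 : (V_1,p_1) \to (\K,0)$ be the restriction of $P$ to $V_1$; then $g_1$ is by construction the germ of a polynomial function on the algebraic variety $V_1$.

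It remains to check that $\sigma$ restricts to a homeomorphism $(V_1,p_1)\to (V,p)$ intertwining $g_1$ and $g$. Since $\sigma$ is a homeomorphism of the ambient space and $\sigma(V_1) = V$ by definition of $V_1$, the restriction $\sigma|_{V_1} : V_1 \to V$ is automatically a homeomorphism of germs. For any $x \in V_1$ we have $g_1(x) = P(x) = \tilde g(\sigma(x)) = g(\sigma(x))$, where the last equality uses that $\sigma(x) \in V$ and that $\tilde g$ extends $g$. Hence $g_1 = g \circ \sigma$ on $V_1$, as required.

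The only genuine point to check carefully is the analytic extendability of $g$ from $(V,p)$ to the ambient $(\K^n,0)$; once this is granted, the result is a direct application of Theorem \ref{thm:generalhomeo}. In the reduced setting this extension is immediate, while in general one may first replace $(V,p)$ by its underlying reduced germ (the homeomorphism $\sigma|_{V_1}$ depends only on the topology) and extend a representative of $g$ that agrees with $g$ on the reduction. Thus no additional machinery beyond Theorem \ref{thm:generalhomeo} is needed.
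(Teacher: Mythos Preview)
Your proof is correct and is precisely the intended argument: the paper states the result as an immediate corollary of Theorem~\ref{thm:generalhomeo} without spelling out a proof, and your embedding-and-extension reduction is the natural way to deduce it. Your brief discussion of the extension of $g$ and the passage to the reduction is appropriate, since the homeomorphism is purely topological.
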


We do not know whether the above results hold true with "homeomorphism" replaced  by "bi-Lipschitz homeomorphism". 

\begin{question}{Open problem}{4. Is an analytic function germ bi-Lipschitz homeomorphic to a Nash or a polynomial germ ? }
\end{question} 

Unlike the analogous open problem for analytic set germs, it is more likely that the answer to this one is negative.  The reason is the following.  
By the existence of Lipschitz stratification, cf. \cite{mostowski85}, \cite{PA94}, the bi-Lipschitz equivalence of analytic set germs does not have continuous moduli (the principle of generic bi-Lipschitz triviality, analogous to the one described in subsection 
\ref{ssec:arbitrarycodimension}, holds true).  On the other hand, the bi-Lipschitz right equivalence of analytic function germs admits continuous moduli, see \cite{henryparus2003}, \cite{henryparus2004}.  

Using Theorem \ref{thm:triv-functions} one may show that the principle of generic topological equisingularity of analytic function germs holds true. 
(An alternative proof follows again by stratification theory, more precisely by Thom stratification and Thom-Mather Isotopy Lemma.)
Let $T$ be a $\K$-analytic space and let $g_t(y)=g(t, y) : (T,t_0)\times (\K^n,0) \to (\K ,0)$ be a $\K$-analytic family of $\K$-analytic function germs. 
We say that the family $g_t(y)$ is \emph{topologically trivial at $t_0$} (for topological right equivalence) if there are an open  neighborhood $T' $ of $t_0$ in $T$ and neighborhoods $\Omega_0$ of the origin in $\K^n$, and $\Omega$ of $(t_0,0)$ in $T\times \K^{n}$, and  a 
homeomorphism 
\begin{align*}
\Ph  : U \times \Omega_0 \to \Omega, 
\end{align*}
 such that 
 $g(\Ph (t,y))=g(0,y)$. Then the following statement holds.

 \begin{cor}[Principle of generic topological equisingularity of analytic function germs,
{\cite[Theorem 8.5]{PP17}}]\label{thm:gen-top-equising-functions}
Let $T$ be a $\K$-analytic space and let $g_t(y) : (T,t_0)\times (\K^n,0) \to (\K ,0)$ be a $\K$-analytic family of $\K$-analytic function germs.  Let $t_0\in T$.   
Then there exist an analytic stratification of an open neighborhood of $t_0$ in $T$ such that for every stratum $S$ and every $t'_0\in S$, the family 
$g_t(y), t\in S$ is topologically trivial at $t_0'$. 
\end{cor}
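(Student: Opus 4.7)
The plan is to encode the family of function germs as a family of hypersurfaces via their graphs and then apply a parametrized version of Theorem \ref{thm:triv-functions}. Set
\[
F(t,x_1,\ldots,x_n) := x_1 - g(t, x_2, \ldots, x_n),
\]
and let $V = V(F) \subset T\times \K^n$. An ambient topological trivialization of the family $(V_t)$ that preserves the coordinate $x_1$ translates directly into a right topological trivialization of $g_t$: writing such a trivialization as $h_t(x_1,y)=(x_1,\hat h_t(x_1,y))$ and putting $\sigma_t(y) := \hat h_t(g_{t'_0}(y),y)$, membership $(t_0',x_1,y)\in V$ iff $(t,h_t(x_1,y))\in V$ yields
\[
g_t(\sigma_t(y)) = g_{t'_0}(y),
\]
and continuity of $h_t$ in $t$ gives local topological right-triviality of the family.

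The second step is the construction, over an open neighborhood of $t_0$, of an analytic stratification on each stratum of which the family $V$ is Zariski equisingular in the sense of the modified Definition with conditions (2$'$) and (3$'$) from the paragraph preceding Theorem \ref{thm:triv-functions}. One proceeds exactly as in the proof of Theorem \ref{thm:genericequisingularity}, building inductively pseudopolynomials $F_i(t,x^i)$ from iterated generalized discriminants of $F_n=F$. The only change is that at each reduction step one is no longer allowed to mix $x_1$ with the remaining variables when applying a linear change of coordinates to put the current discriminant into Weierstrass form; instead one factors out the maximal power $x_1^{q_i}$ from the discriminant and applies Weierstrass preparation in the other variables. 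Doing so produces exactly the divisibility relation required by condition (2$'$), and the construction terminates with $F_1\equiv 1$ by the same dimension-drop argument as in the unrestricted case.

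Genericity is arranged by stratifying $T$ so that, on each stratum $S$: the indices $l_i$ of the first nonvanishing generalized discriminants of each $F_i$ are constant, the exponents $q_i$ of $x_1$ extracted from them are constant, and the resulting Weierstrass factors depend analytically on $t\in S$. Each of the loci to be avoided — where an $l_i$ jumps, where an additional factor of $x_1$ appears in a discriminant, or where the current coordinate ordering ceases to be generic for the remaining variables — is a proper analytic subset, so the procedure produces a locally finite analytic stratification as required. Given any $t'_0\in S$, applying Theorem \ref{thm:triv-functions} with $S$ as parameter space and base point $t'_0$ yields a local trivialization $\Phi$ with $\Psi_1(t,x_1)=x_1$, and the graph construction of the first paragraph then gives the desired topological right-triviality of $g_t$ at $t'_0$.

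The main obstacle is the genericity step: verifying that the $x_1$-preserving variant of Zariski equisingularity holds generically on $T$. In the standard setting one has full freedom to rotate coordinates in $\K^n$ in order to make each successive discriminant regular in a newly chosen variable, and this freedom is what drives the usual proof of Theorem \ref{thm:genericequisingularity}. Here the $x_1$-direction is frozen, so one must show that the coordinates $x_2,\ldots,x_n$ can always, after a generic linear change fixing $x_1$, be ordered so that condition (2$'$) is satisfied and that the exceptional sets where this fails form proper analytic subsets of $T$. Once this combinatorial bookkeeping on discriminants is carried out, the remainder of the argument is essentially identical to that of Theorem \ref{thm:genericequisingularity}.
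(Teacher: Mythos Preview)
Your proposal is correct and follows exactly the approach indicated by the paper: encode $g_t$ via its graph $F=x_1-g$, use the modified Zariski equisingularity with conditions (2$'$)--(3$'$), apply Theorem~\ref{thm:triv-functions} to obtain a trivialization fixing $x_1$, and derive right topological triviality via $\sigma_t(y)=\hat h_t(g_{t_0'}(y),y)$. The paper does not spell out the genericity step either (it simply cites \cite[Theorem 8.5]{PP17}), and your identification of it as the only nontrivial point --- showing that after a generic linear change in $x_2,\ldots,x_n$ the $x_1$-preserving discriminant construction goes through with constant combinatorics off a proper analytic subset of $T$ --- is accurate.
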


\subsection{Local topological classification of smooth mappings.}\label{ssec:smoothmappings}
The principle of generic topological equisingularity does not hold for the germs of mappings.  That is it is known by an example of Thom \cite{thom62}, see also \cite{nakai84},   that the topological classification of real or complex, analytic or polynomial map germs admits continuous moduli.  This means that there are, polynomial in $t$, families of polynomial map germs $f_t:(\K^n,0)\to (\K^p,0)$ that have different topological types for  different $t$,  provided $n\ge 3, p\ge 2$, see \cite{nakai84}. 
Recall that we say that two germs $f_i:(\K^n,0)\to (\K^p,0)$, $i=1,2$, 
have \emph{the same topological type} if there exist homeomorphisms germs 
$h:(\K^n,0)\to (\K^n,0)$ and $g:(\K^p,0)\to (\K^p,0)$ such that 
$f_1\circ h= g\circ f_2$, that is, in other words, they are right-left topologically equivalent.

A smooth map germ $f:(\R^n,0)\to (\R^p,0)$ is \emph{topologically $r$-determined} if every smooth map germ with the same $r$-jet as $f$ is topologically equivalent to $f$. In \mbox{
\cite{thom64} }
Thom proposed a stabilization theorem: For any positive integer $r$, there is a closed semialgebraic subset $\Sigma_r$ of the $r$-jet space $J_r(n,p)$ such that 
(i) 
$\codim \Sigma_r \to \infty$ as $r\to \infty$, and (ii) if the $r$-jet of a map-germ $f$ belongs to $J_r(n,p) \setminus \Sigma_r$, then $f$ is topologically $r$-determined. In other words "most" smooth mappings, that is up to a set of infinite codimension in the jet space, look algebraic and are finitely determined.  Thom gave a sketch of proof in \cite{thom64}.  The first complete proof was given by Varchenko in 
\cite{varchenkoizv73,varchenkoizv74} 
using very different ideas that the ones of Thom, namely Zariski equisingularity.  Actually, Varchenko proved a much stronger result.

\begin{thm}[{\cite[Theorem 2]{varchenkoICM75}}]\label{thm:finiteldetermined}
There exists a partition of the space of $r$-jets $J_r(n,p)$ in disjoint semialgebraic sets $V_0, V_1, \ldots$ having the following properties. 
\begin{enumerate}
	\item
Maps whose jets live in the same $V_i$, $i>0$, are (right-left) topologically equivalent.
\item
Any germ whose $r$-jet is in $V_i$ for $i>0$ is simplicial for suitable triangulations of 
$\R^n$ and $\R^p$.
\item
The codimension of $V_0$ in $J_r(n,p)$ tends to infinity as $r$ tends to infinity.
\end{enumerate}
\end{thm}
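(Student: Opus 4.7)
The plan is to apply the algebraic form of Zariski equisingularity (Theorem \ref{thm:algebraic}) to the universal family of polynomial map germs parameterized by the jet space $J_r(n,p)$. First, identify $J_r(n,p)$ with the real affine space of $p$-tuples of polynomials of degree at most $r$ in $n$ variables vanishing at the origin; each $\tau\in J_r(n,p)$ determines a polynomial map germ $P_\tau:(\R^n,0)\to(\R^p,0)$. Form the total graph
$$
\Gamma = \{(\tau,x,y)\in J_r(n,p)\times \R^n\times \R^p : y = P_\tau(x)\},
$$
which is algebraic in $(\tau,x,y)$, and view it as an algebraic family of hypersurfaces in $\R^n\times\R^p$ parameterized by $\tau$.

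To construct the partition, I would proceed by noetherian induction on $J_r(n,p)$ along the lines of the generic Zariski equisingularity principle behind Theorems \ref{thm:algebraic} and \ref{thm:theoremequisingularity3}: after a generic linear change in the $(x,y)$-coordinates, the successive generalized discriminants of the Weierstrass pseudopolynomials $F_{n+p}(\tau,\cdot),F_{n+p-1}(\tau,\cdot),\ldots$ extracted from the ideal of $\Gamma$ become not identically zero on a Zariski-open $U\subset J_r(n,p)$; the induction is applied to the semialgebraic strata of $J_r(n,p)\setminus U$. Call the positive-dimensional strata produced in this way $V_1,V_2,\ldots$ and the residual locus $V_0$. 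Over each $V_i$ with $i\ge 1$, Theorem \ref{thm:algebraic} yields a semialgebraic trivialization $\Phi$ of $\Gamma$. To upgrade ambient topological equivalence to right--left equivalence, order the coordinates on $\R^n\times\R^p$ so that the $y$-block precedes the $x$-block; the triangular shape \eqref{eq:Phitriangular} of $\Phi$ then forces the first $p$ components to depend only on $(\tau,y)$, giving a homeomorphism $g_\tau(y)$, while the last $n$ components, restricted to $\Gamma$ via $y=P_{\tau_0}(x)$, define $\widetilde h_\tau(x)$. Preservation of $\Gamma$ by $\Phi$ then reads $g_\tau\circ P_{\tau_0}=P_\tau\circ\widetilde h_\tau$, i.e.\ right--left topological equivalence on $V_i$. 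Property (2), simpliciality of representatives, follows from semialgebraic triangulation applied to the semialgebraic trivialization (\cite{hardt80}, \cite{BCR}).

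The hard part is property (3), that $\mathrm{codim}_{J_r(n,p)}V_0\to\infty$ as $r\to\infty$. By construction, $V_0$ is the union, over all steps of the inductive cascade, of the non-generic loci on which no linear change of coordinates makes the next required generalized discriminant nonvanishing. As $r$ grows, the number of coefficients available to perform such adjustments grows polynomially in $r$, while each obstruction imposes nontrivial algebraic conditions on an independent block of jet coefficients, so the codimension contributions accumulate. Turning this intuition into a uniform lower bound requires tracking, step by step of the cascade, which blocks of coefficients are "used" and verifying that each failure of non-degeneracy is cut out by equations on fresh coefficients; this quantitative refinement of the generic equisingularity principle is the technical heart of \cite{varchenkoizv73,varchenkoizv74} and is the main obstacle in the proof.
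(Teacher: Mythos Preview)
The paper does not prove this theorem; it is quoted as Varchenko's result with references to \cite{varchenkoizv73,varchenkoizv74,varchenkoICM75}, so there is no in-paper proof to compare against. That said, the Remark the paper places immediately after the theorem pinpoints exactly the gap in your sketch.

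Your argument has an internal tension. To make the discriminant cascade nondegenerate you appeal to ``a generic linear change in the $(x,y)$-coordinates''; to extract right--left equivalence you then demand that the triangular trivialization respect the product $\R^n\times\R^p$, i.e.\ that the first $p$ output components depend only on $(\tau,y)$. A generic linear change mixes $x$ and $y$ and destroys this split. Conversely, if you keep the $y$-block intact and eliminate the $x$-variables first, every pseudopolynomial in the cascade must be regular in some remaining \emph{source} variable. The paper's Remark says precisely that this can fail when $p\ge 2$: the graph, or a later discriminant, need not lie in the zero set of a Weierstrass polynomial in a source variable, because of fibers of larger-than-expected dimension (``blowing-up of the special fiber''). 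This obstruction is not removed by generic position in the ambient $(x,y)$-space; it is a condition on the jet $\tau$, and the jets where it occurs must be pushed into $V_0$. Your description of $V_0$ as the residual locus of the noetherian induction does not account for this, and your derivation of $g_\tau$ from the triangular form is therefore unjustified on the strata you construct.

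You also acknowledge that property (3) is the technical core and defer entirely to \cite{varchenkoizv73,varchenkoizv74}. That is fair, but it means the proposal is a strategy outline rather than a proof: the codimension estimate is what separates this theorem from the soft generic-equisingularity principle, and your heuristic about ``fresh blocks of coefficients'' does not obviously survive the actual cascade, in which each discriminant is a complicated polynomial in all previous coefficients. Finally, for property (2), ``semialgebraic triangulation'' alone triangulates sets, not maps; you need a triangulation-of-maps statement, which is an additional ingredient, not a corollary of Theorem~\ref{thm:algebraic}.
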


The stabilization theorem of Thom was also shown by du Plessis in 
\cite{duplessis82}.  The proof given there follows the original Thom's ideas, stratification theory, transversality, isotopy lemmas and  Mather's ideas about versal unfoldings.
Another application of Zariski equisingularity method to finite determinacy was given in 
\cite{bobadilla2004}, where the function case ($p=1$) was considered.  Note that topologically finitely determined function germs 
$f:(\K^n,0)\to (\K,0)$ have isolated singularities (or are regular).  
In \cite{bobadilla2004} Bobadilla gives a meaningful version of Theorem 
\ref{thm:finiteldetermined} for non-isolated singularities by considering functions belonging to a fixed ideal $I$ instead of the whole space of analytic germs at the origin.  We refer the reader to 
\cite{bobadilla2004} for details.  

\begin{rem}
If the target space of $f_t$ is of dimension bigger than one then the method, we applied in the previous subsection to trivialize families of function germs may not work.  In general we cannot trivialize the family of graphs starting from the variables in the target, as we did by taking $F(t,x_1, \ldots, x_n) := x_1 - g(t, x_2, \ldots , x_{n})$, if this graph is not included in the zero set of a Weierstrass polynomial in a variable in the source.  
This is related to the presence of fibers of dimension bigger than the expected dimension (dimension of the source minus dimension of the target), not only for $f$ but for every function (discriminant) obtained during the construction process.

Even if this phenomenon of "blowing-up" of the special fiber is not present, that is we can apply Zariski method without mixing the variables of the source and of the target, we cannot, in general, construct topological trivialization that is the identity on the target.  That means that, if $p\ge 2$, we get the right-left equivalence instead of the right one as in Theorem \ref{thm:gen-top-equising-functions}.  
\end{rem}


\section{Equisingularity along a nonsingular subspace.  
Zariski's dimensionality type}\label{sec:ZEalong}

In \cite[Definition 3] {zariski71open}, see also \cite{varchenkoICM75},  Zariski introduced the notion of algebro-geometric equisingularity, now called Zariski equisingularity, of an algebroid hypersurface $V\subset \K^{r+1}$ along a nonsingular subspace of $\Sing V$.  This notion can be easily adapted to the complex and real analytic set-ups.

Let $V=f^{-1}(0)$ be an analytic hypersurface defined in a neighborhood of a point $\point \in \K^{r+1}$. As before we assume that $f$ is reduced. Let 
$\W$ be a nonsingular analytic subspace of $\Sing V$ containing $\point$. 
Let $x_1,x_2, \ldots, x_{r+1}$ be a local coordinate system at $\point$. 
Consider a set of $r$ elements $z_1, z_2, \ldots ,z_r$ of the local ring of $V$ at $\point$ :
$$
z_i = z_i(x_1, x_2, \ldots, x_{r+1}) = z_{i,1} + z_{1,2} + \cdots , \quad 
i = 1, 2, \ldots, r,
$$
where the $z_i$ are convergent power series in the $x$'s, and $z_{i,\alpha}$ is homogeneous of degree $\alpha$.
We say that the $r$ elements $z_i$ form \emph{a set of parameters} if the following two conditions are satisfied :
\begin{enumerate}
\item [(a)]
\emph{$x = 0$ is an isolated solution of the $r + 1$ equations $z_1(x) = z_2(x) = \cdots = z_r(x) = f(x) = 0$.
\item [(b)]
The $ r$ linear forms $z_{i,1}$ are linearly independent.}
\end{enumerate}
If condition (b) is satisfied, then the $r$ linear equations
$z_{i,1} (x) = 0, i = 1, 2, \ldots , r$ define a line $l_z$ through $P$ and 
the parameters $z_i$ define a co-rank one projection $\pi_z $ of a neighborhood of 
$\point$ in $\K^{r+1}$ onto a neighborhood of $\bpoint= \pi (\point)$ 
in  $\K^{r}$.  
This projection $\pi_z$ is called \emph{permissible} if the fiber $\pi^{-1}(\pi (\point ))$, that is a nonsingular curve, is transverse to $\W$ (here 
"transverse"  means that the the tangent line to the fiber is not tangent to $W$).  
If this curve is transverse to $V$ at $\point$, that is $l_z$ does not belong to the tangent cone $C_\point (V)$ to $V$ at $\point$, then the projection is called \emph{transversal} (or transverse) and the $z_i(x), i=1, \ldots, r$, are \emph{transversal parameters}.  

Let $\pi_z$ be a permissible projection and let $\pi_{z,V}$ denote the restriction of $\pi$ to $V$. 
Thus we may suppose that locally $f$ is a suitable reduced Weierstrass polynomial whose discriminant $\D_f$ is an analytic function in 
$(z_1, z_2, \ldots ,z_r)$.  Denote by $\DD_z$ its zero set, that is the discriminant locus of $\pi_{z,V}$.  

The projection $\pi_z (\W)$ is a nonsingular variety $\bW$, of the same dimension as $\W$. 
Since we have assumed that $W\subset \Sing V$ we have $\bW\subset \DD_z$.  
If $\dim \W = \dim \DD_z =r-1$ then we say that $V$ is \emph{Zariski equisingular at $\point $ along $\W$} if $\bpoint$ is a non-singular point of $\DD_z$.  In the general case, Zariski's definition is inductive and goes as follows.

\begin{defn}\label{def:ZAalong}
We say that $V$ is \emph{Zariski equisingular at $\point $ along $\W$ } if there  exists a permissible projection $\pi_z$ such that 
$\DD_z$ is Zariski equisingular along $\bW$ at $\bpoint$ 
(or if $\bpoint$ is a nonsingular point of $\DD_z$). 
\end{defn}


\subsection{Equimultiplicity.  Transversality of projection}\label{ssec:equimultiplicity}

As Zariski states on page 489 of \cite{zariski71open} the algebro-geometric equisingularity,  i.e. Zariski equisingularity as defined 
in Definition \ref{def:ZAalong}, implies equimultplicity.

\begin{prop}\label{prop:equimultiplicity}
If $V$ is Zariski equisingular at $\point $ along $\W$ then the multiplicity 
of $V$ is constant along $\W$. 
\end{prop}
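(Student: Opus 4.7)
\smallskip
\noindent\textbf{Proof plan.} The strategy is to reduce to the plane curve case, where equimultiplicity for Zariski equisingular families is Zariski's Theorem~7 of \cite{zariski65-S1}, cited in this survey immediately following Theorem~\ref{thm:equisingulacurves}. Given a point $P' \in W$ near $P$, the goal is to produce a family of plane curve germs $(V \cap H_t, P_t)_{t \in W}$ with $P_0 = P$, $P_{t'} = P'$, which is Zariski equisingular in the sense of Definition \ref{def:equising-curves}, and for which $\mult_{P_t}(V\cap H_t) = \mult_{P_t} V$ for each $t$.

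The first step is to choose an analytically varying family of two-dimensional subspaces $H_t \subset \K^{r+1}$ through $P_t \in W$, each transverse to $W$ at $P_t$ and generic with respect to the tangent cone $C_{P_t}V$. The genericity condition ensures that $H_t \cap C_{P_t} V = C_{P_t}(V \cap H_t)$, which gives the equality of multiplicities $\mult_{P_t} V = \mult_{P_t}(V\cap H_t)$ by the standard characterization of multiplicity in terms of generic plane sections. The family $H_t$ should moreover be chosen so that the line $l_z$ defining the permissible projection $\pi_z$ lies in $H_t$ for each $t$, which is consistent with permissibility (the fiber of $\pi_z$ is already transverse to $W$).

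The second, and main, step is to verify that $(V \cap H_t, P_t)$ is a Zariski equisingular family of plane curves. For this, one proceeds by induction on the depth $d$ of the recursive Definition \ref{def:ZAalong}. In the base case, $\bar P$ is a nonsingular point of $\Delta_z$ on $\bar W$, so after a local change of coordinates on $\pi_z(H_t)$ the pullback of $\Delta_z$ is cut out by a single coordinate function, which is exactly the condition \eqref{eq:discriminant} required in Definition \ref{def:equising-curves}. In the inductive step, apply the inductive hypothesis to $\Delta_z$ along $\bar W$ at $\bar P$ to deduce that $\Delta_z$ itself, pulled back to the two-dimensional plane $\pi_z(H_t)$, defines an equisingular family of plane curves on $\pi_z(H_t)$; combining this with the factorization of the discriminant of $f|_{H_t}$ through the discriminant tower gives the required form $x^M \cdot \unit$ for the discriminant of the restricted projection. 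Then Theorem~7 of \cite{zariski65-S1} yields $\mult_{P_t}(V\cap H_t) = \mult_{P}(V \cap H_0)$ for all $t$, and by the first step this transfers to $\mult_{P_t} V = \mult_P V$.

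\smallskip
\noindent\textbf{Main obstacle.} The delicate point is the inductive verification that the restricted family of plane curves is genuinely Zariski equisingular in the narrow sense of Definition \ref{def:equising-curves}, rather than satisfying some weaker property. One must choose the 2-planes $H_t$ compatibly with the entire tower of iterated permissible projections in Definition \ref{def:ZAalong}, and check that each successive discriminant behaves well under restriction to $\pi_z(H_t)$, in particular that no factor is introduced that would destroy the monomial-times-unit form of the final discriminant. An alternative route via Varchenko's topological triviality (Theorem \ref{thm:topological_triviality}) appears tempting but is blocked by Zariski's own multiplicity conjecture (topological equivalence of complex hypersurface germs is not known to imply equimultiplicity in general), so the reduction to Zariski's plane curve result via generic slicing seems unavoidable.
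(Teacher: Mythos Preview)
There is a genuine gap in the inductive step. Your $2$-plane $H_t\subset\K^{r+1}$ contains the fibre line $l_z$ of $\pi_z$, so its image $\pi_z(H_t)$ is a \emph{line} in $\K^r$, not a $2$-plane; thus the phrase ``$\Delta_z$ pulled back to the two-dimensional plane $\pi_z(H_t)$ defines an equisingular family of plane curves'' is dimensionally inconsistent. What you really need, in order to verify Definition~\ref{def:equising-curves} for the family $V\cap H_t$, is that the discriminant \emph{function} $D_F$ restricted to the line $\ell_t:=\pi_z(H_t)$ has the form $x^{M}\cdot\unit(t,x)$ with $M$ independent of $t$. The order of vanishing of $D_F$ along a generic line through $\bar P_t$ equals $\mult_{\bar P_t}D_F$ (the multiplicity of the non-reduced function), whereas your inductive hypothesis---Proposition~\ref{prop:equimultiplicity} applied to the \emph{reduced} hypersurface $\Delta_z$---only gives constancy of $\mult_{\bar P_t}\Delta_z$. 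These differ exactly when $D_F$ has multiple factors, which is the typical situation. So the induction does not close as written: a single $2$-plane slice at the top of the tower cannot see the needed information, because the iterated discriminants live in successively lower-dimensional ambient spaces, and your $H_t$ does not interact with them beyond the first level.

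The proofs the paper cites take a different route. Zariski's argument in \cite{zariski75} and the analytic version \cite[Proposition~3.6]{PP17} use the full tower of projections rather than a single slice: one works with the triangular trivialization $\Phi_{i}$ built level by level (cf.\ Lemma~\ref{lem:multiplicityconstant} and the discussion in \S\ref{ssec:topequising}), and shows that along each fibre the root structure---in particular, which roots coincide and with what multiplicity---is preserved as $t$ varies. This is exactly the kind of information lost when you collapse to a single $2$-plane. Your remark that the topological route is blocked by Zariski's multiplicity conjecture is correct for an \emph{arbitrary} ambient homeomorphism, but the specific triangular trivialization produced here carries extra structure (it preserves the entire discriminant tower, cf.\ also property~(iii) of Theorem~\ref{thm:arcwise-analytic_triviality}) that sidesteps the conjecture; dismissing that route is premature.
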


Zariski proves it when $\dim \W = \dim V-1=r-1$, see \cite[Theorem 7]{zariski65-S1}, and in the general algebroid case in \cite{zariski75}.  
For a proof in the complex, and also real analytic case, see 
\cite[Proposition 3.6]{PP17}. 


Similarly to Definition \ref{def:ZAalong} one may 
define  \emph{transverse Zariski equisingularity along a nonsingular subspace} as the one given by 
transverse projections. By Proposition \ref{prop:equimultiplicity}, because the equimultiple families are normally pseudo-flat 
(continuity of the tangent cone),  the transversality of $\pi_z$ at $\point$ implies the transversality at all points of $\W$ in a neighborhood of $\point$. 

One can also define  \emph{generic or generic linear Zariski equisingularity along a nonsingular subspace}.  For generic linear it means that we require at each stage the projection to be chosen from a Zariski open non-empty set of linear projection. Note that a priori this notion depends on the choice of coordinates and it is not clear whether it is preserved by nonlinear changes of coordinates.  We discuss the notion of  generic projection in subsections \ref{ssec:motivation} and \ref{ssec:dimensionality}. 

\subsection{Relation to other equisingularity conditions}\label{ssec:relationto}

As we mentioned before Varchenko showed in \cite{varchenkoizv72}, see also 
\cite{varchenkoizv72,varchenkoICM75},  that in the complex or real analytic case Zariski equisingularity implies topological triviality.  

In  \cite[Question E]{varchenkoizv72},   Zariski asked as well whether Zariski equisingularity implies Whitney's conditions.  This has been disproved by 
Brian{\c c}on end Speder in \cite{brianconspeder75a} for the equisingularity as defined in Definition \ref{def:ZAalong}.  
In \cite{speder75} Speder shows that if $V$ is Zariski equisingular along 
a nonsingular variety $\W$ for sufficiently generic projections, then the pair $(\Reg (V), \W)$ satisfies Whitney's Conditions.  For instance, generic linear projections, that is from a Zariski open non-empty subset of such projections, are generic in the sense of Speder. This result was improved in \cite{PP17}, where it was shown that  transverse Zariski equisingularity, both in real and complex analytic cases,  implies Whitney's Conditions, see Theorem 4.3 and  Theorem 7.1 of \cite{PP17} for precise statements.


There are several classical examples describing the relation between 
Zariski equisingularity and Whitney's conditions.  The general set up 
for these examples is the following.  Consider a complex algebraic hypersurface 
$X\subset \C^4$ defined by a polynomial $F(t,x,y,z)=0$ such that $\Sing X =T$, where $T$ is the $t$-axis. Let $\pi : \C^4 \to T$ be the standard projection.  In all these examples $X_t =\pi ^{-1} (t)$, $t\in T$, is a family of  isolated singularities,  topologically trivial along $T$.  These examples relate the following conditions:
 \begin{enumerate}
 \item
 $X$ is Zariski equisingular along $T$,  Definition  \ref{def:ZAalong}.  
 \item 
  $X$ is Zariski equisingular along $T$ for a transverse projection.  
  \item
    $X$ is Zariski equisingular along $T$ for a generic system of coordinates.  Here we consider "generic" in the sense of  \cite{brianconhenry80}.  It is equivalent, see loc. cit. to be generic linear, 
    or generic in the sense of Zariski \cite{zariski1979}, that we recall in Section \ref{ssec:dimensionality} below. 
    \item
    The pair $(X\setminus T,T)$ satisfies Whitney's conditions (a) and (b). 
  \end{enumerate}

Clearly (3)$\Rightarrow$(2)$\Rightarrow$(1).  Speder showed  (3)$\Rightarrow$(4) in \cite{speder75}  
and (2)$\Rightarrow$(4) 
for families of complex analytic hypersurfaces with isolated singularities in $\C^3$  in his thesis 
\cite{spederthese} (not published). 
Theorem 7.1 of \cite{PP17}
 gives (2)$\Rightarrow$(4) in the general case.   As the examples below show, all the other 
 implications are false.  

\begin{example}[\cite {brianconspeder75a}]\label{ex:briancon-speder}
\begin{align}
F(x,y,z,t)= z^5 + t y^6 z + y^7 x + x^{15} 
\end{align}
This example satisfies (1) for the projections  $(x,y,z) \to (y,z) \to x$ but  (4) fails.  
As follows from  Theorem 7.1 of \cite{PP17}, (2) fails as well.  
\end{example}

\begin{example}[\cite {brianconspeder75b}]
\begin{align}
F(x,y,z,t)= z^3 + t x^4 z + y^6 + x^{6} 
\end{align}
In this example (4) is satisfied and (3) fails.  
This example satisfies (1) for the projections  $(x,y,z) \to (x,z) \to x$.   
\end{example}

\begin{example}[\cite {luengo85}]\label{ex:luengo}
\begin{align}
F(x,y,z,t)= z^{16} + ty z^3 x^7 + y^6z^4+ y^{10} + x^{10} 
\end{align}
In this example (2) is satisfied and (3) fails.  
\end{example}

\begin{example}[\cite{parusinski1985}]
\begin{align}
F(x,y,z,t)= x^9 + y ^{12} + z^{15} + tx^3 y ^4 z^5 
\end{align}
In this example (4) is satisfied and (1) fails.  This also shows that (4) does not imply (2).
\end{example}


\subsection{Lipschitz equisingularity}\label{ssec:lipschitz}

In 1985 T. Mostowski \cite{mostowski85} introduced the notion of Lipschitz stratification and showed the existence of such stratification for germs of complex analytic subsets of $\C^n$. For complex algebraic varieties, such stratification exists globally. 
The existence of Lipschitz stratification for real analytic spaces and algebraic varieties was shown \cite{Lipschitz-Fourier}, \cite{PA94}.  
Lipschitz stratification satisfies the extension property of  stratified Lipschitz vector fields from lower-dimensional to higher-dimensional strata, and therefore implies local bi-Lipschitz triviality along each stratum (and hence Lipschitz equisingularity as well).  Mostowski's construction is similar to the one of Zariski, but involves considering many co-rank one generic projections at each stage of construction. For more on Lipschitz stratification, 
we refer the interested reader to \cite{mostowski85}, \cite{Lipschitz-review}, \cite{halupczok-yin2018}.  

By Lipschitz saturation, see \cite{PTpreprint}, an equisingular family of complex analytic plane curves is bi-Lipschitz trivial, i.e. trivial by a local ambient bi-Lipschitz homeomorphism.  In general, there is a conjectural relation between Lipschitz and Zariski equisingularity, at least in the complex analytic set up.

\begin{question}{5. Open problem}{Are generically Zariski equisingular families of complex hypersurfaces bi-Lipschitz equisingular?  Does Zariski equisingularity provide a "natural" way of construction of Lipschitz stratification in the sense of Mostowski ?  
} 
\end{question} 

For families of complex surface singularities, that is along a nonsingular subspace of codimension 2 the following results have been announced in \cite{NPpreprint}, \cite{PPpreprint19}.  In  \cite{NPpreprint} it was shown that generically Zariski equisingular families of normal complex surface singularities are bi-Lipschitz trivial.  In \cite{PPpreprint19} was shown that a natural stratification given by subsequent generic 
(or generic linear) projections of a complex hypersurface satisfies Mostowski's Conditions in codimension 2.  In particular, the latter result implies that generic Zariski equisingulariy of families (not necessarily isolated) of complex surface hypersurface singularities is Lipschitz equisingular.


\subsection{Zariski dimensionality type.  Motivation}\label{ssec:motivation}

When $\dim \W = \dim V - 1$,
 $V$ is Zariski equisingular at $\point $ along $\W$ 
if and only if $V$ is isomorphic to the total space of an equisingular family of plane curve singularities along $\W$, see \cite[Theorem 4.4]{zariski65-S2}.   Then, moreover, Zariski equisingularity can be realized by 
any transversal projection $\pi_z$.  

Guided by this example, Zariski conjectured in \cite[Question I]{zariski71open}, 
that Zariski equisingularity for a single permissible projection implies the equisingularity for generic projection (or for almost all projections that we recall later in section \ref{ssec:almostall}).   
An affirmative answer to this question would imply, in particular, that if there exists an equisingular projection then there exist a transversal equisingular projection. Both turned out not to be true.    
In \cite{luengo85} Luengo gave an example of a family of surface 
singularities in $\C^3$ that is Zariski equisingular for one projection, that is even transversal, but is not equisingular for the generic projection, see Example 
\ref{ex:luengo}. Brian\c con end Speder gave in \cite {brianconspeder75a}
an example that is equisingular for one projection but there is no transversal projection that gives Zariski equisingularity, see Example 
\ref{ex:briancon-speder}.  

Therefore Zariski  in \cite{zariski1979} proposes a different strategy.  Instead of arbitrary permissible projections, or even transversal  
projections, Zariski uses generic projections to define the equisingularity relation.  (We recall what "generic" means for Zariski in the next subsection.)  Having fixed such an equisingularity relation, Zariski introduces the notion of dimensionality type.  For this the equisingularity relation should first satisfy the following property.\\

\noindent
\emph{The set of points of equivalent singularities form a locally nonsingular subspace of $V$ of codimension that depends only on this equisingularity class.}
\\

Thus, for a point $\point \in V$ the set of points equivalent to $(V,\point)$ is nonsingular and its codimension in $V$ characterizes how complicated the singularity is.  This codimension is then called \emph{the dimensionality type of $(V,\point)$}.  The points of dimensionality type $0$ are the nonsingular points of $V$.  The simplest singular points of $V$, of dimensionality type $1$, are those at which $V$ is isomorphic to the total space of an equisingular family of plane curves. 
The closure of the set of points of fixed equisingularity type may contain 
points of different equisingularity type but only of the higher dimensionality type and only on finitely many such equisingular strata.

 The very definition of what is meant by the word "generic" is the main point of  Zariski's definition. Let us make a quick comment on an apparently obvious choice.  Similarly to Definition \ref{def:ZAalong} one may 
define  \emph{generic linear Zariski equisingularity} as the one given by linear projections belonging to a Zariski open non-empty subset of linear projections. 
Except the case of the dimensionality type $1$, it is not clear whether such notion of generic linear Zariski equisingularity is preserved by non-linear local changes of coordinates, nor whether it implies the generic Zariski equisingularity.  

\subsection{Zariski dimensionality type}\label{ssec:dimensionality}

Formally Zariski's original definition of the dimensionality type requires the field extension by infinitely many indeterminates.  Therefore, in this subsection exceptionally,  we work over an arbitrarily algebraically closed field of characterisitc zero that will be denoted by $\kk$,  
and instead of the category of complex analytic spaces we consider the category of algebraic or algebroid varieties.  Recall that the algebroid varieties are the varieties defined by ideals of the rings of formal power series, see \cite[Ch. IV]{lefschetz53}   and \cite[Section 2]{zariski1979}. 
We note, however, that in \cite[Proposition 5.3]{zariski1979} Zariski shows that his definition involving such a field extension can be replaced  by a condition that is based on the notion of almost all projections that does not require a field extension.  
We recall the approach via almost all projections in the next subsection. 

Let $\kk$ be an algebraically closed field of characteristic zero.
Consider an algebroid  hypersurface $
V=f^{-1}(0) \subset (\kk^{r+1},\point )$ at $\point \in \kk^{r+1}$ defined in a local system of coordinates by a formal power series $f\in \kk [[x_1, \ldots , x_{r+1}]]$ (that we assume  reduced). Zariski's definition of the dimensionality type is based on the following notion of generic projection.  \emph{The generic projection}, in the sense of \cite{zariski1979}, is  the map 
    $\pi_u (x) = (\pi_{u.1}(x), \ldots, \pi_{u,{r}}(x))$, with 
  \begin{align}\label{eq:genprojection}
\pi_{u,i} (x) = \sum_{d\ge 1} \sum_{\nu_1+ \cdots \nu_{r+1}=d} u^{(i)}_{\nu_1, \cdots, \nu_{r+1} } x^\nu . 
  \end{align}
This map is defined over $\kk^*$, any field extension of $k$ that contains all coefficients $u^{(i)}_{\nu_1, \cdots, \nu_{r+1} }$ 
as interdeterminates, thus formally 
$\pi_u: ((\kk^*)^{r+1},\point ) \to ((\kk^*)^{r},\point_0^*)$, where 
 $\point_0^*= \pi_u (\point)$. Denote by 
 $\DD_u ^*\subset ({\kk^*}^{r},\point_0^* )$ the discriminant locus of 
 $(\pi_u)_{|V^*}$, where $V^*=f^{-1}(0) \subset ({\kk^*}^{(r+1)},\point )$.

 Let $\W$ be a nonsingular algebroid subspace of $\Sing V$ and let 
 $\bbW = \pi_u (\W)$.  If $\dim \W =\dim V-1$ then we say that 
  $V$ is generically Zariski equisingular at $\point $ along $\W$ 
 if  $\bar \point^*$ is a non-singular point of $\DD_u^*$. In general, the definition is similar to Definition \ref{def:ZAalong}. 

\begin{defn}\label{def:generic equisingularity}
We say that $V$ is \emph{generically Zariski equisingular at $\point $ along $\W$ }
 if $\DD_u ^*$ is generically Zariski equisingular 
 along $\bbW$ at $\bar \point^*$ (or if $\bar \point^*$ is a non-singular point $\DD_u^*$).
\end{defn}  

The definition of dimensionality type of \cite{zariski1979} is again recursive. It is defined 
for any point $Q$ of $V$, not only the closed point $\point $. 

\begin{defn}\label{def:dimensionalitytype}
Any simple (i.e. non-singular)  point $Q$ of $V$ is  of dimensionality type $0$.  Let $Q$ be a singular point $V$ and let $Q^*_0= \pi_u (Q)$.  
Then \emph{the dimensionality type of $V$ at $Q$}, 
denoted by $\dimtype (V,Q)$, is equal to 
$$\dimtype (V,Q) = 1+ \dimtype (\DD^*_u,Q^*_0).$$  
\end{defn}

The  notions of generic Zariski equisingularity and of dimensionality type are independent of the choice of 
this field extension $\kk^*$, see  
\cite{zariski1979} and \cite{zariski1980}.

As follows from \cite{zariski1979} the set of points where the dimensionality type is constant, say equal to $\sigma$, is either
 empty 
or a nonsingular locally closed subvariety of $V$ of codimension $\sigma$.   
The dimensionality type defines a stratification $V= \sqcup_\alpha S_\alpha $ of $V$ that satisfies the frontier condition, i.e. if 
$S_\alpha \cap S_\beta \ne \emptyset$ then $S_\alpha \subset \overline {S_\beta}$, and 
$V$ is generically Zariski equisingular along $\W$ at $\point$ if and only if  $W$ is contained in the stratum containing $\point$.  

A  singularity is of dimensionality type $1$ if and only if it is isomorphic to the total space of an equisingular family of plane curve singularities, see \cite{zariski65-S2} Theorem 4.4. 
 Moreover, if this is the case, then 
$V$ is such an equisingular family for any transverse system of coordinates.  

\subsection{Almost all projections.}\label{ssec:almostall}
In  \cite[Proposition 5.3]{zariski1979} Zariski shows that, in  Definitions \ref{def:generic equisingularity} and \ref{def:dimensionalitytype}, the generic projection $\pi_u$ can be replaced by a condition that involves almost all projections $\pi_{\bar u}: \kk^{r+1}\to \kk^{r}$ (so it does not require a field extension).   

 One says that a property holds \emph{for almost all projections} if 
 there exists a finite set of polynomials $\mathcal G=\{G_\mu\}$ in the indeterminates  $u^{(i)}_{\nu_1, \cdots, \nu_{r+1} }$ and coefficients in $\kk$ such that 
 this property holds for all projections $\pi_{\bar u}$ for $\bar u$ 
 satisfying  $\forall_\mu G_\mu (\bar u)\ne 0 $.  
 Here the bar denotes the specialization $u \to \bar u$, i.e. 
 we replace all indeterminates $u^{(i)}_{\nu_1, \cdots, \nu_{r+1} }$ by 
 elements of $\kk$, $\bar u^{(i)}_{\nu_1, \cdots, \nu_{r+1} }\in \kk$.  Thus, for almost all  projections $\pi_{\bar u} $ 
 the dimensionality type $\dimtype (V,\point )$ equals 
 \begin{align}\label{eq:dimtype-polynproj} 
 \dimtype (V,\point ) = 1+ \dimtype (\DD_{\bar u},\pi_{\bar u} (\point )), \end{align}
  where $\DD_{\bar u}$ 
 denotes the discriminant locus of ${\pi_{\bar u}}_{|V}$.   Since the finite set of polynomials $\mathcal G$ involves nontrivially only finitely many indeterminates  $u^{(i)}_{\nu_1, \cdots, \nu_{r+1} }$, we may specialize the remaining ones to $0$, and then the projection 
 $\pi_{\bar u} $ becomes polynomial. 
 This means that, as soon as we know the set of polynomials $\mathcal G$, 
 we may compute the dimensionality type of $V$ at $\point $ 
 just by computing $\dimtype (\Delta_{\bar u},\pi_{\bar u} (\point ))$, 
 for only one polynomial projection $\pi_{\bar u} $, satisfying $\forall_\mu G_\mu (\bar u)\ne 0 $.  
 Similarly, in order to check whether  $V$ is generic Zariski 
 equisingular at $\point $ along $S$, 
 it suffices to check it for $\Delta_{\bar u} ^*$ 
 along $\pi_{\bar u} (S)$ at $\pi_{\bar u} (\point )$. 

\subsection{Canonical stratification of hypersurfaces}\label{ssec:canonstrat}

The dimensionality type defines a canonical stratification of a given algebroid or algebraic hypersurface over an algebraically closed field of characteristic zero.  
 Unfortunately, in general, no specific information on the polynomials of 
$\mathcal G$ is given in \cite{zariski1979}.  
Zariski's construction is purely transcendental, 
and  there is  no explicit bound on the degree of such polynomial 
projections. This makes, for instance, an algorithmic computation of Zariski's canonical stratification impossible.  The algebraic case was studied in more detail by Hironaka \cite{hironaka79}, where the semicontinuity of such a degree in Zariski topology is shown. This implies in particular that the dimensionality type induces, in the algebraic case, a stratification by locally closed algebraic subvarieties.  

For complex analytic singularities, we can define the dimensionality type using generic polynomial or generic analytic projections. It follows from Speder \cite[Theorems I-IV]{speder75}, that in the complex analytic case thus defined canonical stratification satisfies Whitney's conditions (a) and (b). It also follows from Theorems 3.7 and 7.3 of \cite{PP17}, where a stronger regularity condition, called (arc-w) was proven.  

It is an open question whether generic linear projections are generic in the sense of Zariski.    

\begin{question}
{6. Open problem}{Are generic linear projections sufficient  to define generic Zariski equisingularity and the dimensionality type ?  More precisely, is the formula  \eqref{eq:dimtype-polynproj} valid for a Zariski open non-empty set of linear projections $\pi_{\bar u} $ ?
} 
\end{question}

Even if the answers to the above questions were positive it would not give an algorithm to compute the dimensional type and the canonical stratification automatically, but the positive answer to this question would probably help to consider other related open problems that we summarize below.  


\begin{question}
{7. Open problems}{
Characterize geometrically or algebraically generic polynomial projections in the sense of  Zariski ? 
} 
\end{question}

In the case of strata of dimensionality type 2 a partial answer to 
both questions of problem 6 was obtained in \cite{brianconhenry80}.  
In this paper Brian\c con and Henry characterized 
generically Zariski equisingular families of isolated surface singularities
 in the 3-space in terms of local analytic invariants: Teissier's numbers (multiplicity, Milnor number, and Milnor number of a generic plane section), the number of double points and the number of cusps of the apparent contours of the generic projection of the generic  fibre of a mini-versal deformation.  All these numbers are local analytic invariants, and therefore linearly generic change of coordinates is generic in the sense of Zariski.  This shows in particular that, if $\Sing V$ is of codimension 2 in $V$ at $\point$, then the generic linear projections are sufficient to verify whether 
 $\dimtype (V,p) = 2$.  

Note that the answer to problem 7 should be quite subtle even in the case of the dimensionality type 2.  Let us remind that in 
\cite{luengo85} Luengo gave an example of a family of surface 
singularities in $\C^3$ that is Zariski equisingular for one transverse 
projection but not for the generic ones.

\begin{question}
{8. Open problems}{
Is the canonical Zariski's stratification of a complex analytic hypersurface Lipschitz equisingular ?  
} 
\end{question}

This problem is a version of problem 5. 
As we have mentioned in section 
\ref{ssec:lipschitz}, the positive answer for the strata of codimension 2  was announced \cite{PPpreprint19} and, if moreover $\codim (\Sing V) =2$, in \cite{NPpreprint}.

\subsection{Zariski equisingularity and equiresolution of singularities}\label{ssec:equiresolution}

In the case of dimensionality type $1$, i.e. equivalently, for families of plane curves singularities, Zariski equisingularity can be expressed  in terms of blowings-up (monoidal transformations) and equiresolution.  
More precisely, firstly, the following property of stability by blowings-up holds.

\begin{thm}[{\cite[Theorem 7.4]{zariski65-S2}}]\label{thm:equiresolution}
Assume that the singular locus $W$ of $V$ is of codimension $1$ and let $\point$ be a regular point of $W$. Let $\pi : V' \to V$ be the blowing-up of $W$, and let $\pointgen$ denote a general point of $W$.  
Then $V$ is of dimensionality type $1$ along $W$ at $\point$ if and only if the following two conditions hold 
\begin{enumerate}
\item  [\rm (1)]
$\pi^{-1} (\point) $ is a finite set of the same cardinality as 
$\pi^{-1} (\pointgen) $,   
\item [\rm (2)]
each $\point '\in \pi^{-1} (\point) $ is either a nonsingular point 
of $V'$ or a point of dimensionality type $1$. 
\end{enumerate}
\end{thm}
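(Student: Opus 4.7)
My plan is to use Zariski's characterization (Theorem 4.4 of \cite{zariski65-S2}, cited earlier in this text) that $(V,\point)$ has dimensionality type $1$ along $W$ if and only if it is locally isomorphic to the total space of a Zariski equisingular family of plane curve singularities parameterized by $W$. After choosing local coordinates $(t,x,y)\in \K^{r-1}\times \K^2$ so that $W=\{x=y=0\}$ and $V=\{f(t,x,y)=0\}$ with each fiber $V_t$ a plane curve singular at the origin, the blowing-up $\pi:V'\to V$ of $W$ is computed fiberwise in the standard two affine charts of the ambient blow-up of $\K^{r+1}$ along $W$; consequently $\pi^{-1}(t_0,0,0)$ is in natural bijection with the set of tangent lines at the origin of the plane curve $V_{t_0}$. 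This reduces both implications to the classical theory of embedded resolution of plane curve singularities.

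\textbf{Forward direction.} Suppose $V$ is of dimensionality type $1$ along $W$ at $\point$. Then $\{V_t\}$ is a Zariski equisingular family in the sense of Definition \ref{def:equising-curves}. By Theorem \ref{thm:PuiseuxTheorem} and Corollary \ref{cor:Puiseuxcor2}, the Puiseux data of the branches (in particular the multiplicities and the tangent cones at each infinitely near point) are constant along $W$; in particular the number of distinct tangent lines at $\pointgen$ equals the number at $\point$, which is (1). For (2), observe that one of the classical equivalent characterizations of equisingularity of plane curves, listed among Teissier's twelve, is equiresolution; hence the strict transform $V'$ is either smooth at each $\point'\in \pi^{-1}(\point)$ (if blowing up already resolves the corresponding branches) or again the total space of an equisingular family of plane curve singularities over a nonsingular subvariety $W'\subset V'$ that projects isomorphically to a neighborhood of $\point$ in $W$; in the latter case $\point'$ is of dimensionality type $1$ by the same characterization.

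\textbf{Converse.} Assume (1) and (2). I would argue by strong induction on the length $N(V,\point)$ of the embedded resolution tree of the plane curve singularity of $V_0$ at the origin obtained by successively blowing up singular points. By the equiresolution characterization of equisingularity quoted above, it suffices to prove that the resolution trees of $(V_0,\point)$ and of $(V_{\pointgen},\pointgen)$ are combinatorially identical. Condition (1) fixes the first level of both trees, and combined with the normal pseudo-flatness of equimultiple families (see Proposition \ref{prop:equimultiplicity} and the comments following it) yields equimultiplicity along $W$. Condition (2), applied together with the inductive hypothesis at each point of $\pi^{-1}(\point)$, transports the agreement of resolution trees to each successive blow-up level, so the whole family is equisingular.

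\textbf{Main obstacle.} The delicate point is the inductive use of (2): for each $\point'\in \pi^{-1}(\point)$ at which $V'$ is singular, one must identify the nonsingular subvariety $W'\subset V'$ that witnesses dimensionality type $1$ at $\point'$ as the strict transform (or appropriate component thereof) of the lift of $W$, and verify that $\pi(W')$ is a neighborhood of $\point$ in $W$ so that the induction on the resolution length genuinely closes. This requires a careful bookkeeping of the local charts of $\pi$ to compare the Puiseux expansions of the branches of $V_0$ and $V_{\pointgen}$ after blow-up; the Jung--Abhyankar theorem (Theorem \ref{thm:Jung}) applied to the discriminant of a transverse projection, together with the stability of transversality under blow-up of an equimultiple family, should make this step rigorous.
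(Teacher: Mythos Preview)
The paper does not supply a proof of this theorem: it is stated as a quotation of \cite[Theorem 7.4]{zariski65-S2} in a survey section, with only an explanatory paragraph afterwards interpreting conditions (1) and (2) in the complex analytic setting. There is therefore no in-paper argument to compare your proposal against; any comparison would have to be with Zariski's original 1965 proof.

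That said, your outline follows the classical route and is broadly correct in spirit, but the converse has a genuine circularity. You write that condition (1) ``combined with the normal pseudo-flatness of equimultiple families (see Proposition \ref{prop:equimultiplicity} \ldots) yields equimultiplicity along $W$''. But Proposition \ref{prop:equimultiplicity} has Zariski equisingularity as its \emph{hypothesis}, which is precisely what you are trying to establish; and pseudo-flatness is a \emph{consequence} of equimultiplicity, not a tool to deduce it. Condition (1) only fixes the number of tangent directions of $V_t$ at the origin, which does not by itself control the multiplicity (a cusp and a node both have one tangent direction). Equimultiplicity has to come out of the full inductive package: one compares the resolution trees level by level, and the multiplicity at each infinitely near point is read off from the tree, so once the trees of $V_0$ and $V_{t_{gen}}$ agree the multiplicities agree a posteriori. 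Your induction should be set up so that equimultiplicity is a conclusion at the end, not an input at the start.

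The obstacle you flag yourself---identifying the witness subvariety $W'$ for dimensionality type $1$ at each $\point'$ with a section over $W$---is real, and your gesture toward Jung--Abhyankar is not the natural tool here. What one actually uses is that the singular locus of $V'$ near $\point'$ is contained in the exceptional divisor over $W$ and is of codimension $1$ in $V'$; since $\point'$ has dimensionality type $1$, the stratum through $\point'$ is nonsingular of codimension $1$, hence locally coincides with $\Sing V'$, which in turn maps finitely onto $W$ by the proper base change of $\pi$. This is elementary once stated, but it is the step that makes the induction close.
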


In the complex analytic set up  the conditions (1) and (2) mean that 
over a neighborhood of $\point$, $W':=\pi^{-1} (W) \to W$ is a finite analytic covering and $V'$ is Zariski generically equisingular along each connected component of $W'$ (this includes the case that $V'$ is nonsingular along this connected component). 

Secondly, the repeating process of such blowings-up leads not only to 
a resolution $\tilde \pi :\tilde V \to V$ of $V$ but also to an equiresolution in the following sense. Fix a local projection of $\pr : \K^{r+1}\to W$, such that $\pr ^{-1} (\point)$ is nonsingular and transverse to $W$. The fibers of this projection restricted to $W$ are plane curve singularities $V_t$ parameterized by $t\in W$.  Then the restrictions of  $\tilde \pi$, $\tilde V_t:= \tilde \pi^{-1}(V_t) \to V_t$ are the resolutions of $V_t$, see 
e.g. \mbox{
\cite[Corollary 7.5]{zariski65-S2} }
and the paragraph after it, and the induced  projections of $\tilde V$ and 
of the exceptional divisor $E$ of $\tilde \pi$  onto $W$ are submersions.   Note that $\tilde V$ coincides with a normalization of $V$, and hence it is also an equinormalization of the family $V_t$.  this can be deduced as well from Puiseux with parameter Theorem \ref{thm:PuiseuxTheorem}).

\begin{question}{Question}
Do the two properties, stability by blowing-up and equiresolution, hold 
for arbitrary codimension strata of Zariski's stratification ? 
\end{question}

The first part of this question was stated by Zariski in \mbox{
\cite{zariski76}}
: "Now, one test that any definition of equisingularity must meet is the test of its stable behavior along $W$ under blowing-up of $W$".  It also appears in questions F, G and H of \mbox{
\cite{zariski71open}}
.  An example of Luengo \mbox{
\cite{luengo84} }
shows that the generic Zariski equisingularity does not satisfy the stability under blowings-up property.     That is 
 $V= \{z^7+ y^ 7+ty^5x^3+ x^ {10}=0\}\subset \C^4$ is generically Zariski equisingular along $W=\Sing V= \{x=y=z=0\}$ but the blow-up $\tilde V$ of $V$ along $W$ is not generically Zariski equisingular along $\tilde W=\Sing \tilde V$  ($\tilde W $ is a nonsingular curve in this example).

 Moreover, reciprocally, a blowing-up may make non-equisingular families equisingular as shows another example from \mbox{
\cite{luengo84}}
.  In this example  
 $V= \{z^4+ y^6+tz^ 2 y^3+ x^ {8}=0\}\subset \C^4$ is not generically Zariski equisingular along $W=\Sing V= \{x=y=z=0\}$ (the origin is of dimensionality 3), but the strata of 
 Zariski canonical stratification of the blow-up of $W$, 
 $\tilde V \to V$,  project submersively onto $W$ (no point of the highest possible dimensionality $3$ in $\tilde V$).

In order to answer the second part of this question one has to make precise what the equiresolution, also often called simultaneous resolution, means. 
To start with there are embedded resolutions (modifications of the ambient space containing $V$) and the non-embedded ones. 
The concept of equiresolution was largely studied and clarified in \mbox{
\cite{Teissier76-77-80} }
within the context of non-embedded resolutions of complex analytic surface singularities, and in \mbox{
\cite{Lip97} }
in the context of embedded equiresolution of complex analytic or algebraic varieties.  

The relation between generic Zariski equisingularity and equiresolution depends which notion of the equiresolution is adapted.  In the first cited above example of Luengo 
\cite{luengo84}
, 
 $V$ understood as a  family $V_t$ does not have strong simultaneous resolution in the sense of Teissier 
\cite{Teissier76-77-80}
, if we require, moreover, that this resolution is given by a sequence blowings-up of non-singular equimultiple centers following Hironaka's algorithm, see 
\cite{luengo84} 
for details. 
 In \cite{Lip97} 
Lipman proposes a strategy to prove a weaker version of 
 equiresolution for such families.  This proof is completed by Villamayor in \
\cite{villamayor2000}
, in the algebraic case.   Villamayor's equiresolution is a modification of the ambient nonsingular space containing $V$ more general than the ones obtained as compositions of sequences of nonsingular centers blowings-up.  Moreover, it is not required that the induced resolution of $V$ is an isomorphism over $V \setminus \Sing V$.

There is another open problem related to the equiresolutions of 
families of singularities.  Namely, it is not clear whether, in general,  equiresolution can be used to construct topological trivializations. Let us explain it on a simplified example.  Suppose that $V$ is a hypersurface of a nonsingular (real or complex) analytic manifold $M$, $\tilde \pi : \tilde M \to M$ is a modification, the composition of blowings-up of smooth centers for instance, and that  
 \begin{enumerate}
 	\item [(i)]
 	there is a local (at $\point \in W$) analytic projection $\pr : M\to W$,
 	such that $\pr ^{-1} (\point)$ is nonsingular and transverse to $W$ and  whose fibers restricted to $V$ define a family of reduced hypersurfaces $V_t$, $t\in W$.
 	\item [(ii)]
 	$\tilde \pi ^{-1} (V)$ is a divisor with normal crossings that is the union of the strict transform $\tilde V$ of $V$, assumed non-singular, and the exceptional divisor $E$. 
 	\item[(iii)]
The strata of the canonical stratification of $\tilde \pi ^{-1} (V)$ (as a divisor with normal crossings) project by $\tilde \pr := \pr \circ \tilde \pi $ submersively onto $W$. 
 \end{enumerate}
 Then, by a version of Ehresmann fibration theorem, there is a trivialization of $\tilde \pr $ that preserves the strata of $\tilde \pi ^{-1} (V)$ and hence also $\tilde V$.  Moreover, by 
\cite{kuo85}, 
this trivialization can be made real analytic.  
 If this trivialization is a lift by $\tilde \pi$ of a trivialization of $\pr$ then we call the latter \emph{a blow-analytic trivialization}. 
 $$
\xymatrix{
\tilde M \ar@{{}->}[rr]^{\tilde \pi } \ar[rd]_{\tilde \pr } & & M  \ar[ld]^{\pr } \\
& W &
}
$$

In \mbox{
\cite{kuo85} }
Kuo developed the theory of blow-analytic equivalence of real analytic function germs.  Kuo shows that  
for families of isolated hypersurface (or function) singularities such 
blow-analytic trivializations exist under the following additional assumptions: $W = \Sing $ and $\tilde \pi $ is an isomorphism over the complement of $W$. In this case he constructs a nice (real analytic) trivialization of the resolution space that projects to a topological trivialization of $\pr$. In particular, it shows that the principle of generic blow-analytic equisingularity of real analytic function germs holds, see \cite[Theorem 1]{kuo85}.  But in the general, non-isolated singularity case it is not even clear whether there is a topological trivialization that lifts to the resolution space.  
The existence of a blow analytic trivialization of family of 
non-isolated singularities 
 remains the main open problem of Kuo's Theory, see \cite{FKK1998}, \cite{koike2004}.

Blow-analytic trivializations are, in particular, arc-analytic, and, actually, at least in the algebraic (i.e. Nash) case, blow-analytic and arc-analytic maps coincide, see \cite{bierstonemilmanarcanalytic}, \cite{subfunc}.  Thus there is a hope that Theorem \ref{thm:arcwise-analytic_triviality}, proven using Zariski equisingularity, can help in developing blow-analytic theory of non-isolated singularities.

\section{Appendix. Generalized discriminants} \label{sec:discriminants}

We recall the notion of generalized discriminants, 
see Appendix IV of \cite{whitneybook}, \cite{BPRbook}, \cite{roy2006}, or Appendix B of \cite{PP17}. 
Let $\kk$ be an arbitrary field of characteristic zero and let 
\begin{align}\label{type}
F(Z) = Z^d + \sum_{i=1}^ d a_iZ^{d-i} = \prod_{i=1}^d (Z-\xi_i)\in \kk [Z], 
\end{align}
be a polynomial with coefficients $a_i\in \kk$ and the roots $\xi_i\in \bkk$.  
Recall that the discriminant of $F$ is a polynomial in the coefficients 
$a_i$ that can be defined either in terms of the roots 
$$\D_F =  \prod_{1\le j_1< j_2 \le d}  (\xi_{j_2}-\xi_{j_1})^2 ,$$
or as the determinant of the Jacobi-Hermite matrix
$$\D_F =\left|\begin{array}{cccc} s_0 & s_1 & \cdots & s_{d-1}\\
s_1 & s_2 & \cdots & s_d \\
\cdots & \cdots & \cdots & \cdots\\
s_{d-1} & s_l & \cdots & s_{2d-2} \end{array}\right| ,$$
where  $s_i=\sum_{j=1}^d \xi_j^i$, for $i\in \N$, are Newton's symmetric functions.  Thus $\D_F =0$ if and only if $F$ has a multiple root in $\bkk$.  

\emph{The generalized discriminants, or subdiscriminants}, $\D_{d+1-l}$ of $F$, $l=1, \ldots, d$, 
can be defined as the principal minors of the  Jacobi-Hermite matrix 
$$\D_{d+1-l} : =\left|\begin{array}{cccc} s_0 & s_1 & \cdots & s_{l-1}\\
s_1 & s_2 & \cdots & s_l \\
\cdots & \cdots & \cdots & \cdots\\
s_{l-1} & s_l & \cdots & s_{2l-2} \end{array}\right| ,$$
and we put $D_d=1$ by convention. Thus $\D_{d+1-l}$ are polynomials in the coefficients $a_i$. 
The generalized discriminants can be defined equivalently in terms of the roots 
$$
\D_{d+1-l} =  \sum_{r_1 <\cdots < r_{l}} \,  \prod_{j_1< j_2;\, j_1, j_2 \in \{r_1, \ldots ,r_{j}\}} (\xi_{j_2}-\xi_{j_1})^2. 
$$

In particular $\D_1= \D_F$ and $F$ admits exactly $l$ distinct roots in $\overline \kk$ if and only if $\D_{1} = \cdots = \D_{d-l}=0$ and 
$\D_{d-l+1} \ne 0$.

If $F$ is not reduced, that means in this case that has multiple roots, the generalized discriminants can replace the (classical) 
discriminant of $F_{red}$. Here $F_{red}$ equals $\prod (Z-\xi_i)$, 
where the product is taken over all distinct roots of $F$. 
 The following lemma is easy. 

\begin{lemma}\label{lem:twodiscr} 
Suppose $F$ has exactly $l>1$ distinct roots in 
$\bkk$ of multiplicities $\mathbf m=(m_1, ..., m_l)$.  Then there is a positive constant $C= C_{l,\mathbf m}$, depending only on 
$\mathbf m=(m_1, ..., m_l)$, such that  the generalized discriminant 
$\D_{d-l+1}$ of $F$ and 
the standard discriminant $\D_{F_{red}}$ of $F_{red}$ are related by 
$$
\D_{d-l+1}= C \D_{F_{red}}.
$$
\end{lemma}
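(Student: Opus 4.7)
\medskip

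The plan is to evaluate the generalized discriminant $\D_{d-l+1}$ directly from its expression in terms of the roots, and compare it to $\D_{F_{red}}$.

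Write $F(Z)=\prod_{i=1}^{d}(Z-\xi_{i})$ where the roots are listed with multiplicity. By hypothesis the set $\{\xi_{1},\ldots,\xi_{d}\}$ consists of exactly $l$ distinct values $\zeta_{1},\ldots,\zeta_{l}$, with $\zeta_{k}$ appearing $m_{k}$ times, so that $F_{red}(Z)=\prod_{k=1}^{l}(Z-\zeta_{k})$ and $\D_{F_{red}}=\prod_{1\le k_{1}<k_{2}\le l}(\zeta_{k_{2}}-\zeta_{k_{1}})^{2}$. Recall from the appendix the formula
\begin{align*}
\D_{d+1-l}(F)\;=\;\sum_{r_{1}<\cdots<r_{l}}\;\prod_{\substack{j_{1}<j_{2}\\ j_{1},j_{2}\in\{r_{1},\ldots,r_{l}\}}}(\xi_{j_{2}}-\xi_{j_{1}})^{2},
\end{align*}
where the outer sum runs over $l$-element subsets of $\{1,\ldots,d\}$.

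The first step is to notice that a given summand $\prod_{j_{1}<j_{2}}(\xi_{j_{2}}-\xi_{j_{1}})^{2}$ vanishes unless the indices $r_{1},\ldots,r_{l}$ select roots that are pairwise distinct; since only $l$ distinct values are available and we are choosing $l$ indices, a non-vanishing summand corresponds exactly to picking one index from each of the $l$ multiplicity classes. The second step is to observe that every such admissible choice yields the same product, namely $\prod_{1\le k_{1}<k_{2}\le l}(\zeta_{k_{2}}-\zeta_{k_{1}})^{2}=\D_{F_{red}}$.

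The third step is to count the admissible choices: the number of ways to pick one index from each class of size $m_{k}$ is $m_{1}m_{2}\cdots m_{l}$. Combining these observations gives
\begin{align*}
\D_{d-l+1}\;=\;\Bigl(\prod_{k=1}^{l}m_{k}\Bigr)\,\D_{F_{red}},
\end{align*}
so the constant may be taken to be $C_{l,\mathbf m}=\prod_{k=1}^{l}m_{k}$, which depends only on $\mathbf m$, as required. I do not anticipate a real obstacle here: the only minor care needed is to match the indexing convention of the appendix (so that $\D_{d+1-l}$ really is the sum over $l$-element subsets) and to record that the hypothesis $l>1$ guarantees the right-hand side $\D_{F_{red}}$ is a non-trivial product rather than the empty-product convention for a degree-one polynomial.
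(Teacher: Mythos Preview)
Your proof is correct and complete: the key observation that only subsets hitting each multiplicity class once contribute, together with the count $\prod_k m_k$ of such subsets, gives exactly $\D_{d-l+1}=(m_1\cdots m_l)\,\D_{F_{red}}$. The paper does not actually supply a proof---it simply states the lemma as ``easy''---so your argument is the natural elementary verification the author has in mind, and even makes the constant $C_{l,\mathbf m}=\prod_{k=1}^l m_k$ explicit.
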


We conclude with the following obvious consequence of the IFT.

\begin{lemma}\label{lem:implicit}
Let $F\in \C \{x_1, ... ,x_n\}[Z]$ be a monic polynomial in $Z$ such that the discriminant $\D_{F_{red}}$ does not vanish at the origin.  Then, there is a neighborhood $U$ of $0\in \C^n$ such that 
the complex roots $F$ are analytic on $U$, distinct, and of constant multiplicities (as the roots of $F$).  
\end{lemma}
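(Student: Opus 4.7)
\medskip
\noindent
\textbf{Plan of proof of Lemma \ref{lem:implicit}.}
The plan is to apply the Implicit Function Theorem to $F_{red}$ (not to $F$, since $F$ itself may have multiple roots at the origin), extract analytic branches of the roots, and then identify the multiplicities with which these roots appear in $F$ by a continuity argument on $Z$-derivatives. Write $d = \deg_Z F$ and $l = \deg_Z F_{red}$. Since $\D_{F_{red}}(0)\neq 0$, the polynomial $F_{red}(0,Z)$ has $l$ pairwise distinct roots $\alpha_1,\ldots,\alpha_l\in\C$, each simple, so $\partial F_{red}/\partial Z(0,\alpha_i)\neq 0$. By IFT, each $\alpha_i$ extends to an analytic function $\xi_i(x)$ on a neighborhood $U$ of $0$ with $\xi_i(0)=\alpha_i$ and $F_{red}(x,\xi_i(x))\equiv 0$. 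Shrink $U$ so that the $\xi_i(x)$ remain pairwise distinct and $\D_{F_{red}}(x)\neq 0$ throughout $U$.

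Next I would identify the roots of $F(x,\cdot)$ on $U$. On $U$ the polynomial $F_{red}(x,Z)$ has $l$ distinct simple roots, so necessarily $F_{red}(x,Z)=\prod_{i=1}^{l}(Z-\xi_i(x))$. Since $F$ and $F_{red}$ have the same set of roots (in $Z$) over any point, the roots of $F(x,Z)$ are exactly $\xi_1(x),\ldots,\xi_l(x)$. Because $F$ is monic of degree $d$ in $Z$, this gives
\begin{equation*}
F(x,Z) \;=\; \prod_{i=1}^{l}\bigl(Z-\xi_i(x)\bigr)^{\mu_i(x)},\qquad \sum_{i=1}^{l}\mu_i(x)=d,
\end{equation*}
where $\mu_i(x)\in\Z_{\ge 1}$ denotes the multiplicity of $\xi_i(x)$ as a root of $F(x,\cdot)$. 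It remains to show $\mu_i(x)$ is constant in $x$.

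For this I would use the standard characterization of multiplicity via $Z$-derivatives: $\mu_i(x)$ equals the least integer $k\ge 1$ with $\partial^k F/\partial Z^k(x,\xi_i(x))\neq 0$. Let $m_i:=\mu_i(0)$, so $\sum_i m_i = d$. By definition $(\partial^{m_i}F/\partial Z^{m_i})(0,\alpha_i)\neq 0$, and this expression is analytic in $x$ (as a composition of analytic maps), hence nonzero on a neighborhood of $0$. Shrinking $U$ once more, we obtain $\mu_i(x)\le m_i$ for every $i$ and every $x\in U$. Summing,
\begin{equation*}
d \;=\; \sum_{i=1}^{l}\mu_i(x) \;\le\; \sum_{i=1}^{l} m_i \;=\; d,
\end{equation*}
which forces $\mu_i(x)=m_i$ for all $i$ and all $x\in U$. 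This yields the three claimed properties on $U$: the roots $\xi_i(x)$ are analytic, pairwise distinct, and of constant multiplicities $m_i$ as roots of $F$.

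There is no real obstacle here; the only point requiring mild care is that IFT must be applied to $F_{red}$ rather than $F$ (so that the roots extend as \emph{simple} roots of an auxiliary polynomial), while the multiplicities relevant to the statement are those in $F$, which is why the final step passes through the derivative characterization of multiplicity together with the dimension count $\sum\mu_i=\sum m_i=d$.
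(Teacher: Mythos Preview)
Your proof is correct and follows exactly the route the paper indicates: the paper gives no proof at all, merely labeling the lemma an ``obvious consequence of the IFT,'' and your argument spells out precisely that consequence (IFT applied to $F_{red}$, then a continuity-plus-counting argument for the multiplicities in $F$). There is nothing to add or correct.
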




\bibliographystyle{siam}
\bibliography{ZE}

\newcommand{\noop}[1]{}
\begin{thebibliography}{10}

\bibitem{abhyankar55}
{\sc S.~Abhyankar}, {\em On the ramification of algebraic functions}, Amer. J.
  Math., 77 (1955), pp.~575--592.

\bibitem{artinmazur65}
{\sc M.~Artin and B.~Mazur}, {\em On periodic points}, Ann. of Math. (2), 81
  (1965), pp.~82--99.

\bibitem{BPRbook}
{\sc S.~Basu, R.~Pollack, and M.-F. Roy}, {\em Algorithms in real algebraic
  geometry}, vol.~10 of Algorithms and Computation in Mathematics,
  Springer-Verlag, Berlin, second~ed., 2006.

\bibitem{bierstonemilmanarcanalytic}
{\sc E.~Bierstone and P.~D. Milman}, {\em Arc-analytic functions}, Invent.
  Math., 101 (1990), pp.~411--424.

\bibitem{BKPR18}
{\sc M.~Bilski, K.~Kurdyka, A.~Parusi\'{n}ski, and G.~Rond}, {\em Higher order
  approximation of analytic sets by topologically equivalent algebraic sets},
  Math. Z., 288 (2018), pp.~1361--1375.

\bibitem{BPR17}
{\sc M.~Bilski, A.~Parusi\'{n}ski, and G.~Rond}, {\em Local topological
  algebraicity of analytic function germs}, J. Algebraic Geom., 26 (2017),
  pp.~177--197.

\bibitem{BCR}
{\sc J.~Bochnak, M.~Coste, and M.-F. Roy}, {\em Real algebraic geometry},
  vol.~36 of Ergebnisse der Mathematik und ihrer Grenzgebiete (3) [Results in
  Mathematics and Related Areas (3)], Springer-Verlag, Berlin, 1998.
\newblock Translated from the 1987 French original, Revised by the authors.

\bibitem{bochnakkucharz84}
{\sc J.~Bochnak and W.~Kucharz}, {\em Local algebraicity of analytic sets}, J.
  Reine Angew. Math., 352 (1984), pp.~1--14.

\bibitem{brianconhenry80}
{\sc J.~Brian\c{c}on and J.~P.~G. Henry}, {\em \'{E}quisingularit\'e
  g\'en\'erique des familles de surfaces \`a singularit\'e isol\'ee}, Bull.
  Soc. Math. France, 108 (1980), pp.~259--281.

\bibitem{brianconspeder75b}
{\sc J.~Brian\c{c}on and J.-P. Speder}, {\em Familles \'{e}quisinguli\`eres de
  surfaces \`a singularit\'{e} isol\'{e}e}, C. R. Acad. Sci. Paris S\'{e}r.
  A-B, 280 (1975), pp.~Aii, A1013--A1016.

\bibitem{brianconspeder75a}
\leavevmode\vrule height 2pt depth -1.6pt width 23pt, {\em La trivialit\'{e}
  topologique n'implique pas les conditions de {W}hitney}, C. R. Acad. Sci.
  Paris S\'{e}r. A-B, 280 (1975), pp.~Aiii, A365--A367.

\bibitem{duplessis82}
{\sc A.~du~Plessis}, {\em On the genericity of topologically
  finitely-determined map-germs}, Topology, 21 (1982), pp.~131--156.

\bibitem{bobadilla2004}
{\sc J.~Fern\'{a}ndez~de Bobadilla}, {\em Topological finite-determinacy of
  functions with non-isolated singularities}, Comment. Math. Helv., 79 (2004),
  pp.~659--688.

\bibitem{FKK1998}
{\sc T.~Fukui, S.~Koike, and T.-C. Kuo}, {\em Blow-analytic equisingularities,
  properties, problems and progress}, in Real analytic and algebraic
  singularities ({N}agoya/{S}apporo/{H}achioji, 1996), vol.~381 of Pitman Res.
  Notes Math. Ser., Longman, Harlow, 1998, pp.~8--29.

\bibitem{F:IT}
{\sc W.~Fulton}, {\em Intersection theory}, vol.~2 of Ergebnisse der Mathematik
  und ihrer Grenzgebiete (3) [Results in Mathematics and Related Areas (3)],
  Springer-Verlag, Berlin, 1984.

\bibitem{goreskymacphersonIH}
{\sc M.~Goresky and R.~MacPherson}, {\em Intersection homology theory},
  Topology, 19 (1980), pp.~135--162.

\bibitem{StratifiedMorseTheory}
\leavevmode\vrule height 2pt depth -1.6pt width 23pt, {\em Stratified {M}orse
  theory}, vol.~14 of Ergebnisse der Mathematik und ihrer Grenzgebiete (3)
  [Results in Mathematics and Related Areas (3)], Springer-Verlag, Berlin,
  1988.

\bibitem{goresky81}
{\sc R.~M. Goresky}, {\em Whitney stratified chains and cochains}, Trans. Amer.
  Math. Soc., 267 (1981), pp.~175--196.

\bibitem{halupczok-yin2018}
{\sc I.~Halupczok and Y.~Yin}, {\em Lipschitz stratifications in power-bounded
  {$o$}-minimal fields}, J. Eur. Math. Soc. (JEMS), 20 (2018), pp.~2717--2767.

\bibitem{hammle71}
{\sc H.~Hamm and L.~D. {u}ng Tr\'{a}ng}, {\em Un th\'{e}or\`eme du type de
  {L}efschetz}, C. R. Acad. Sci. Paris S\'{e}r. A-B, 272 (1971),
  pp.~A946--A949.

\bibitem{hammle73}
{\sc H.~A. Hamm and L.~D. {u}ng Tr\'{a}ng}, {\em Un th\'{e}or\`eme de {Z}ariski
  du type de {L}efschetz}, Ann. Sci. \'{E}cole Norm. Sup. (4), 6 (1973),
  pp.~317--355.

\bibitem{hardtsullivan88}
{\sc R.~Hardt and D.~Sullivan}, {\em Variation of the {G}reen function on
  {R}iemann surfaces and {W}hitney's holomorphic stratification conjecture},
  Inst. Hautes \'{E}tudes Sci. Publ. Math.,  (1988), pp.~115--137 (1989).

\bibitem{hardt80}
{\sc R.~M. Hardt}, {\em Semi-algebraic local-triviality in semi-algebraic
  mappings}, Amer. J. Math., 102 (1980), pp.~291--302.

\bibitem{henryparus2003}
{\sc J.-P. Henry and A.~Parusi\'{n}ski}, {\em Existence of moduli for
  bi-{L}ipschitz equivalence of analytic functions}, Compositio Math., 136
  (2003), pp.~217--235.

\bibitem{henryparus2004}
\leavevmode\vrule height 2pt depth -1.6pt width 23pt, {\em Invariants of
  bi-{L}ipschitz equivalence of real analytic functions}, in Geometric
  singularity theory, vol.~65 of Banach Center Publ., Polish Acad. Sci. Inst.
  Math., Warsaw, 2004, pp.~67--75.

\bibitem{hironaka79}
{\sc H.~Hironaka}, {\em On {Z}ariski dimensionality type}, Amer. J. Math., 101
  (1979), pp.~384--419.

\bibitem{jung1908}
{\sc H.~W. Jung}, {\em Darstellung der funktionen eines algebraischen
  k{\"o}rpers zweier unabh{\"a}ngigen ver{\"a}nderlichen x, y in der umgebung
  einer stelle x = a, y = b.}, Journal f{\"u}r die reine und angewandte
  Mathematik, 133 (1908), pp.~289--314.

\bibitem{kleiman74}
{\sc S.~L. Kleiman}, {\em The transversality of a general translate},
  Compositio Math., 28 (1974), pp.~287--297.

\bibitem{koike2004}
{\sc S.~Koike}, {\em Finiteness theorems on blow-{N}ash triviality for real
  algebraic singularities}, in Geometric singularity theory, vol.~65 of Banach
  Center Publ., Polish Acad. Sci. Inst. Math., Warsaw, 2004, pp.~135--149.

\bibitem{kuo85}
{\sc T.~C. Kuo}, {\em On classification of real singularities}, Invent. Math.,
  82 (1985), pp.~257--262.

\bibitem{kurdyka88}
{\sc K.~Kurdyka}, {\em Ensembles semi-alg\'ebriques sym\'etriques par arcs},
  Math. Ann., 282 (1988), pp.~445--462.

\bibitem{lefschetz53}
{\sc S.~Lefschetz}, {\em Algebraic geometry}, Princeton University Press,
  Princeton, N. J., 1953.

\bibitem{Lip97}
{\sc J.~Lipman}, {\em Equisingularity and simultaneous resolution of
  singularities}, in Resolution of singularities ({O}bergurgl, 1997), vol.~181
  of Progr. Math., Birkh\"{a}user, Basel, 2000, pp.~485--505.

\bibitem{lojasiewiczbook}
{\sc S.~{\L}ojasiewicz}, {\em Introduction to complex analytic geometry},
  Birkh\"{a}user Verlag, Basel, 1991.
\newblock Translated from the Polish by Maciej Klimek.

\bibitem{lojasiewiczIHES}
\leavevmode\vrule height 2pt depth -1.6pt width 23pt, {\em Ensembles
  semi-analytiques}, \noop{3001} Preprint IHES (1965), see also
  https://perso.univ-rennes1.fr/michel.coste/.

\bibitem{luengo84}
{\sc I.~Luengo}, {\em A counterexample to a conjecture of {Z}ariski}, Math.
  Ann., 267 (1984), pp.~487--494.

\bibitem{luengo85}
\leavevmode\vrule height 2pt depth -1.6pt width 23pt, {\em An example
  concerning a question of {Z}ariski}, Bull. Soc. Math. France, 113 (1985),
  pp.~379--386.

\bibitem{mccrory78}
{\sc C.~McCrory}, {\em Stratified general position}, in Algebraic and geometric
  topology ({P}roc. {S}ympos., {U}niv. {C}alifornia, {S}anta {B}arbara,
  {C}alif., 1977), vol.~664 of Lecture Notes in Math., Springer, Berlin, 1978,
  pp.~142--146.

\bibitem{MPintersectionhomology18}
{\sc C.~McCrory and A.~Parusi\'{n}ski}, {\em Real intersection homology},
  Topology Appl., 234 (2018), pp.~108--129.

\bibitem{mccroryetal19}
{\sc C.~McCrory, A.~Parusi\'{n}ski, and L.~P\u{a}unescu}, {\em Algebraic
  stratified general position and transversality}, J. Algebraic Geom., 28
  (2019), pp.~139--152.

\bibitem{mostowski84}
{\sc T.~Mostowski}, {\em Topological equivalence between analytic and algebraic
  sets}, Bull. Polish Acad. Sci. Math., 32 (1984), pp.~393--400.

\bibitem{mostowski85}
\leavevmode\vrule height 2pt depth -1.6pt width 23pt, {\em Lipschitz
  equisingularity}, Dissertationes Math. (Rozprawy Mat.), 243 (1985), p.~46.

\bibitem{muroloduplesseistrotman18}
{\sc C.~Murolo, A.~du~Plessis, and D.~Trotman}, {\em On the smooth whitney
  fibering conjecture.}, 12 2018.

\bibitem{nakai84}
{\sc I.~Nakai}, {\em On topological types of polynomial mappings}, Topology, 23
  (1984), pp.~45--66.

\bibitem{narasimhan66}
{\sc R.~Narasimhan}, {\em Introduction to the theory of analytic spaces},
  Lecture Notes in Mathematics, No. 25, Springer-Verlag, Berlin-New York, 1966.

\bibitem{NPpreprint}
{\sc W.~D. Neumann and A.~Pichon}, {\em Lipschitz geometry of complex surfaces:
  analytic invariants and equisingularity}, arXiv:1211.4897, 2012.

\bibitem{parusinski1985}
{\sc A.~Parusi\'{n}ski}, {\em Algebraic equisingularity and {W}hitney's
  conditions}, Bull. Polish Acad. Sci. Math., 33 (1985), pp.~247--253.

\bibitem{Lipschitz-Fourier}
\leavevmode\vrule height 2pt depth -1.6pt width 23pt, {\em Lipschitz properties
  of semi-analytic sets}, Ann. Inst. Fourier (Grenoble), 38 (1988),
  pp.~189--213.

\bibitem{Lipschitz-review}
\leavevmode\vrule height 2pt depth -1.6pt width 23pt, {\em Lipschitz
  stratification}, in Global analysis in modern mathematics ({O}rono, {ME},
  1991; {W}altham, {MA}, 1992), Publish or Perish, Houston, TX, 1993,
  pp.~73--89.

\bibitem{PA94}
\leavevmode\vrule height 2pt depth -1.6pt width 23pt, {\em Lipschitz
  stratification of subanalytic sets}, Ann. Sci. \'{E}cole Norm. Sup. (4), 27
  (1994), pp.~661--696.

\bibitem{subfunc}
\leavevmode\vrule height 2pt depth -1.6pt width 23pt, {\em Subanalytic
  functions}, Trans. Amer. Math. Soc., 344 (1994), pp.~583--595.

\bibitem{PP17}
{\sc A.~Parusi\'nski and L.~P\u{a}unescu}, {\em Arc-wise analytic
  stratification, {W}hitney fibering conjecture and {Z}ariski equisingularity},
  Adv. Math., 309 (2017), pp.~254--305.

\bibitem{PPpreprint19}
\leavevmode\vrule height 2pt depth -1.6pt width 23pt, {\em Lipschitz
  stratification of complex hypersurfaces in codimension 2},
  arXiv:1909.00296v3, 2019.

\bibitem{PR2012}
{\sc A.~Parusi\'{n}ski and G.~Rond}, {\em The {A}bhyankar-{J}ung theorem}, J.
  Algebra, 365 (2012), pp.~29--41.

\bibitem{PRpreprint18}
\leavevmode\vrule height 2pt depth -1.6pt width 23pt, {\em Algebraic varieties
  are homeomorphic to varieties defined over number fields}, \noop{3001}
  Comment. Math. Helv., in press.

\bibitem{pawlucki84}
{\sc W.~Paw{\l}ucki}, {\em Le th\'{e}or\`eme de {P}uiseux pour une application
  sous-analytique}, Bull. Polish Acad. Sci. Math., 32 (1984), pp.~555--560.

\bibitem{PTpreprint}
{\sc F.~Pham and B.~Teissier}, {\em Fractions lipschitziennes d'une algebre
  analytique complexe et saturation de zariski}, Pr\'epublications Ecole
  Polytechnique No. M17.0669 (1969).
  https://hal.archives-ouvertes.fr/hal-00384928.

\bibitem{ploski74}
{\sc A.~P{\l}oski}, {\em Note on a theorem of {M}. {A}rtin}, Bull. Acad. Polon.
  Sci. S\'{e}r. Sci. Math. Astronom. Phys., 22 (1974), pp.~1107--1109.

\bibitem{popescu86}
{\sc D.~Popescu}, {\em General {N}\'{e}ron desingularization and
  approximation}, Nagoya Math. J., 104 (1986), pp.~85--115.

\bibitem{rond18}
{\sc G.~Rond}, {\em Local topological algebraicity with algebraic coefficients
  of analytic sets or functions}, Algebra Number Theory, 12 (2018),
  pp.~1215--1231.

\bibitem{roy2006}
{\sc M.-F. Roy}, {\em Subdiscriminant of symmetric matrices are sums of
  squares}, in Mathematics, Algorithms, Proofs, T.~Coquand, H.~Lombardi, and
  M.-F. Roy, eds., no.~05021 in Dagstuhl Seminar Proceedings, Dagstuhl,
  Germany, 2006, Internationales Begegnungs- und Forschungszentrum f{"u}r
  Informatik (IBFI), Schloss Dagstuhl, Germany.

\bibitem{spederthese}
{\sc J.~P. Speder}, {\em \'{E}quisingularit\'{e} et conditions de {W}hitney},
  Th\`ese Universit\'e de Nice,  (1971).

\bibitem{speder75}
\leavevmode\vrule height 2pt depth -1.6pt width 23pt, {\em
  \'{E}quisingularit\'{e} et conditions de {W}hitney}, Amer. J. Math., 97
  (1975), pp.~571--588.

\bibitem{teissier77}
{\sc B.~Teissier}, {\em The hunting of invariants in the geometry of
  discriminants}, in Real and complex singularities ({P}roc. {N}inth {N}ordic
  {S}ummer {S}chool/{NAVF} {S}ympos. {M}ath., {O}slo, 1976), 1977,
  pp.~565--678.

\bibitem{Teissier76-77-80}
\leavevmode\vrule height 2pt depth -1.6pt width 23pt, {\em R\'esolution
  simultan\'ee, ii}, in S\'eminaire sur les singularit\'es des surfaces,
  vol.~777 of Lecture Notes in Math., Springer, Berlin, 1980, pp.~86--146.

\bibitem{teissierCRAS90}
\leavevmode\vrule height 2pt depth -1.6pt width 23pt, {\em Un exemple de classe
  d'\'{e}quisingularit\'{e} irrationnelle}, C. R. Acad. Sci. Paris S\'{e}r. I
  Math., 311 (1990), pp.~111--113.

\bibitem{thom62}
{\sc R.~Thom}, {\em La stabilit\'{e} topologique des applications
  polynomiales}, Enseign. Math. (2), 8 (1962), pp.~24--33.

\bibitem{thom64}
\leavevmode\vrule height 2pt depth -1.6pt width 23pt, {\em Local topological
  properties of differentiable mappings}, in Differential {A}nalysis, {B}ombay
  {C}olloq, Oxford Univ. Press, London, 1964, pp.~191--202.

\bibitem{trotman2020}
{\sc D.~Trotman}, {\em Stratification theory}, in Handbook of Geometry and
  Topology of Singularities, vol. I, J.~L.~C. Molina, J.~Seade, and L.~D.
  Tr\'ang, eds., \noop{3001} in press, pp.~231--260.

\bibitem{trotman79}
{\sc D.~J.~A. Trotman}, {\em Stability of transversality to a stratification
  implies {W}hitney {$(a)$}-regularity}, Invent. Math., 50 (1978/79),
  pp.~273--277.

\bibitem{varchenkoICM75}
{\sc A.~N. Varchenko}, {\em Algebro-geometrical equisingularity and local
  topological classification of smooth mappings}, in Proceedings of the
  {I}nternational {C}ongress of {M}athematicians ({V}ancouver, {B}.{C}., 1974),
  {V}ol. 1, 1975, pp.~427--431.

\bibitem{varchenkoizv72}
{\sc A.~N. Var\v{c}enko}, {\em Theorems on the topological equisingularity of
  families of algebraic varieties and families of polynomial mappings}, Izv.
  Akad. Nauk SSSR Ser. Mat., 36 (1972), pp.~957--1019.

\bibitem{varchenko73}
\leavevmode\vrule height 2pt depth -1.6pt width 23pt, {\em The connection
  between the topological and the algebraic-geometric equisingularity in the
  sense of {Z}ariski}, Funkcional. Anal. i Prilo\v{z}en., 7 (1973), pp.~1--5.

\bibitem{varchenkoizv73}
\leavevmode\vrule height 2pt depth -1.6pt width 23pt, {\em Local topological
  properties of analytic mappings}, Izv. Akad. Nauk SSSR Ser. Mat., 37 (1973),
  pp.~883--916.

\bibitem{varchenkoizv74}
\leavevmode\vrule height 2pt depth -1.6pt width 23pt, {\em Local topological
  properties of differentiable mappings}, Izv. Akad. Nauk SSSR Ser. Mat., 38
  (1974), pp.~1037--1090.

\bibitem{villamayor2000}
{\sc O.~Villamayor~U.}, {\em On equiresolution and a question of {Z}ariski},
  Acta Math., 185 (2000), pp.~123--159.

\bibitem{whitneymorse65}
{\sc H.~Whitney}, {\em Local properties of analytic varieties}, in Differential
  and {C}ombinatorial {T}opology ({A} {S}ymposium in {H}onor of {M}arston
  {M}orse), Princeton Univ. Press, Princeton, N. J., 1965, pp.~205--244.

\bibitem{whitneyannals65}
\leavevmode\vrule height 2pt depth -1.6pt width 23pt, {\em Tangents to an
  analytic variety}, Ann. of Math. (2), 81 (1965), pp.~496--549.

\bibitem{whitneybook}
\leavevmode\vrule height 2pt depth -1.6pt width 23pt, {\em Complex analytic
  varieties}, Addison-Wesley Publishing Co., Reading, Mass.-London-Don Mills,
  Ont., 1972.

\bibitem{zariski37}
{\sc O.~Zariski}, {\em A theorem on the {P}oincar\'{e} group of an algebraic
  hypersurface}, Ann. of Math. (2), 38 (1937), pp.~131--141.

\bibitem{zariski65-S1}
\leavevmode\vrule height 2pt depth -1.6pt width 23pt, {\em Studies in
  equisingularity. {I}. {E}quivalent singularities of plane algebroid curves},
  Amer. J. Math., 87 (1965), pp.~507--536.

\bibitem{zariski65-S2}
\leavevmode\vrule height 2pt depth -1.6pt width 23pt, {\em Studies in
  equisingularity. {II}. {E}quisingularity in codimension {$1$} (and
  characteristic zero)}, Amer. J. Math., 87 (1965), pp.~972--1006.

\bibitem{zariski68-S3}
\leavevmode\vrule height 2pt depth -1.6pt width 23pt, {\em Studies in
  equisingularity. {III}. {S}aturation of local rings and equisingularity},
  Amer. J. Math., 90 (1968), pp.~961--1023.

\bibitem{zariski71open}
\leavevmode\vrule height 2pt depth -1.6pt width 23pt, {\em Some open questions
  in the theory of singularities}, Bull. Amer. Math. Soc., 77 (1971),
  pp.~481--491.

\bibitem{zariski75}
\leavevmode\vrule height 2pt depth -1.6pt width 23pt, {\em On equimultiple
  subvarieties of algebroid hypersurfaces}, Proc. Nat. Acad. Sci. U.S.A., 72
  (1975), pp.~1425--1426.

\bibitem{zariski76}
\leavevmode\vrule height 2pt depth -1.6pt width 23pt, {\em The elusive concept
  of equisingularity and related questions}, in Algebraic geometry ({J}. {J}.
  {S}ylvester {S}ympos., {J}ohns {H}opkins {U}niv., {B}altimore, {M}d., 1976),
  1977, pp.~9--22.

\bibitem{zariski1979}
\leavevmode\vrule height 2pt depth -1.6pt width 23pt, {\em Foundations of a
  general theory of equisingularity on {$r$}-dimensional algebroid and
  algebraic varieties, of embedding dimension {$r+1$}}, Amer. J. Math., 101
  (1979), pp.~453--514.

\bibitem{zariski1980}
\leavevmode\vrule height 2pt depth -1.6pt width 23pt, {\em Addendum to my
  paper: ``{F}oundations of a general theory of equisingularity on
  {$r$}-dimensional algebroid and algebraic varieties, of embedding dimension
  {$r+1$}'' [{A}mer. {J}. {M}ath. {\bf 101} (1979), no. 2, 453--514; {MR}
  81m:14005]}, Amer. J. Math., 102 (1980), pp.~649--651.

\end{thebibliography}
\printindex

\end{document}